\documentclass[12pt, a4paper]{amsart}
\usepackage[english]{babel}
\usepackage{amssymb,amsmath,amsthm,mathrsfs}
\usepackage{verbatim}
\usepackage{graphicx,wrapfig}
\usepackage[margin=1in]{geometry}
\usepackage{stmaryrd}
\usepackage{hyperref}
\hypersetup{
    colorlinks,
    citecolor=black,
    filecolor=black,
    linkcolor=black,
    urlcolor=black
}
\usepackage{multicol}

\newtheorem{theorem}{Theorem}
\newtheorem{example}[theorem]{Example}
\newtheorem{problem}[theorem]{Problem}
\newtheorem{exercise}[theorem]{Exercise}

\newtheorem{definition}[theorem]{Definition}
\newtheorem{lemma}[theorem]{Lemma}
\newtheorem{remark}[theorem]{Remark}
\newtheorem{proposition}[theorem]{Proposition}
\newtheorem{corollary}[theorem]{Corollary}

\newtheorem*{claim*}{Claim}
\newtheorem*{theorem*}{Theorem}

\def\bal{\begin{aligned}}
\def\eal{\end{aligned}}
\def\be{\begin{equation}\label}
\def\ee{\end{equation}}
\def\bcs{\begin{cases}}
\def\ecs{\end{cases}}
\def\={\;=\;}
\def\+{\,+\,}
\def\-{\,-\,}
\def\C{{\mathbb C}}
\def\Z{{\mathbb Z}}

\def\Q{{\mathbb Q}}
\def\R{{\mathbb R}}
\def\F{{\mathbb F}}
\def\P{{\mathbb P}}
\def\A{{\mathbb A}}
\def\T{{\mathbb T}}

\def\supp{{\rm supp}}
\def\lb{\llbracket}
\def\rb{\rrbracket}
\def\ord{\mathrm{ord}}

\def\sF{\mathcal{F}}
\def\sG{\mathcal{G}}
\def\fil{\mathcal{F}}
\def\sP{\mathcal{P}}

\def\cartier{\mathscr{C}_p}

\def\v#1{{\bf #1}}

\def\is{\equiv}
\def\mod#1{({\rm mod}\ #1)}
\def\hat{\widehat}

\title{Congruences and cohomology}
\author{Masha Vlasenko}\thanks{This work is supported by the National Science Centre of 
Poland (NCN), grant UMO-2020/39/B/ST1/00940.}
\email{masha.vlasenko@gmail.com}
\address{Kyiv School of Economics, Mykoly Shpaka 3, 03113 Kyiv, Ukraine }
\address{Institute of Mathematics of the Polish Academy of Sciences, Sniadeckich 8, 00-656 Warsaw, Poland}

\begin{document}
\maketitle

\bigskip

These are notes of my lecture courses given in the summer of 2024 in the School on Number Theory and Physics at ICTP in Trieste and in the 27th Brazilian Algebra Meeting at IME-USP in S\~ao Paulo. We give an elementary account of $p$-adic methods in de Rham cohomology of algebraic hypersurfaces with explicit examples and applications in number theory and combinatorics.  These lectures are based on the series of our joint papers with Frits Beukers entitled \emph{Dwork crystals} (\cite{DCI,DCII,DCIII}). These methods also have applications in mathematical physics and arithmetic geometry (\cite{IN,Cartier0}), which we overview here towards the end. I am grateful to the organisers of both schools and to the participants of my courses whose questions stimulated writing these notes. 

\smallskip

\emph{I would like to dedicate these lectures to the defenders of Ukraine, with gratitude for the possibility to live and work at the time when our country is being under constant military attacks. }

\tableofcontents

\section{Counting points on algebraic varieties over finite fields}

For an algebraic variety $X$ over a finite field $\F_p$ there exist non-negative integers $m,k$ and algebraic numbers $\alpha_1,\ldots,\alpha_m, \beta_1, \ldots, \beta_k \in \overline{\Q}$ such that numbers of points on $X$ over all field extensions are given by
\be{Frob-roots}
\# X(\F_{p^s}) = \sum_{i=1}^m \alpha_i^s - \sum_{j=1}^k \beta_j^s \quad \text{ for all } \quad s \ge 1. 
\ee
We will refer to $\alpha_i$'s and $\beta_j$'s as the \emph{Frobenius roots} of $X$. Their existence was proved by Bernard Dwork circa 1960 using $p$-adic methods. The natural building blocks in Dwork's approach were hypersurfaces. We are going to elaborate explicit $p$-adic formulas for Frobenius roots of hypersurfaces and explore related algebraic constructions. 

\subsection{Affine and projective algebraic varieties} 

An \emph{affine algebraic variety} $X$ over a field $k$ is a common zero locus of a collection of multivariate polynomials $f_1(\v x),\ldots,f_m(\v x) \in k[x_1,\ldots,x_n]$. Sets of points of $X$ over any field extension  $K \supseteq k$ are defined as
\[
X(K) = \{ \v x=(x_1,\ldots,x_n) \in K^n : f_1(\v x) = \ldots = f_m(\v x)=0 \}.
\]
If there are no equations ($m=0$) we denote $X=\mathbb{A}^n$, the $n$-dimensional \emph{affine space}. In this case one simply has $\mathbb{A}^n(K) = K^n$. 

More general algebraic varieties are glued of affine pieces. A straightforward  example of such a glueing is given by projective spaces and projective varieties. The $n$-dimensional \emph{projective space} $\mathbb{P}^n$ is the variety of lines through the origin in $\mathbb{A}^{n+1}$. Its sets of points are given by 
\[
\mathbb{P}^n(K) = ( K^{n+1} \setminus\{\v 0\} ) / K^{\times} = \{ (x_0,\ldots,x_n) \in K^{n+1} : \exists i \quad x_i \ne 0 \} / \sim,   
\]
where $\sim$ denotes the homothety equivalence $(x_0,\ldots,x_n) \sim (c x_0,\ldots,c x_n)$ for all $c \in K^{\times}$. The respective equivalence class is denoted by $[x_0:\ldots:x_n]$. These brackets are called the homogeneous coordinates on $\mathbb{P}^n$. The $n$-dimensional projective space can be covered by $n+1$ affine spaces
\[\bal
\mathbb{P}^n = \cup_{i=0}^{n} U_i, \qquad U_i = \{ [x_0:\ldots:x_n] : x_i \ne 0\} &\cong \mathbb{A}^n, \\
[x_0:\ldots:x_n] & \mapsto (\frac{x_0}{x_i},\ldots,\frac{x_{i-1}}{x_i},\frac{x_{i+1}}{x_i},\ldots,\frac{x_n}{x_i}). 
\eal\] 
A \emph{projective variety} $X \subset \mathbb{P}^n$ is a common zero locus of a collection of homogeneous polynomials $F_1(\v x),\ldots,F_m(\v x) \in k[x_0,\ldots,x_n]$. It is a union $X = \cup_{i=0}^n X_i$ of affine varieties  $X_i = X \cap U_i$. Here $X_i$ can be identified with the zero locus of polynomials in $n$ variables given by
$f_j(x_0,\ldots,x_{i-1},x_{i+1},\ldots,x_n) = F_j(x_0,\ldots,x_{i-1},1,x_{i+1},\ldots,x_n)$ for $j=1,\ldots,m$.
   
\begin{exercise}\label{ell-curve-exercise} Let $a,b,c \in k$. The projective cubic curve
\[
x_0 x_1^2 = x_2^3 + a x_0 x_2^2 + b x_0^2 x_2 + c x_0^3 
\]
is given in the affine sheet $U_0 \subset \P^2$ by the equation $y^2 = x^3 + a x^2 + b x +c$ in coordinates $(x,y)=(\frac{x_2}{x_0},\frac{x_1}{x_0})$. Check that the only point on this curve in $\P^2 \setminus U_0$ is $O=[0:1:0]$. Look up the definition of non-singular algebraic varieties and check that this curve is non-singular whenever the polynomial $x^3+a x^2 + b x + c$ has no multiple roots. If is the case, this curve is called an elliptic curve. 
\end{exercise}

Non-singular cubic curves with a specified point $O$ defined over $k$ are called \emph{elliptic curves} over $k$. When the characteristic of $k$ is not equal to $2$ or $3$ such a curve can be always given by an equation as above in a suitably chosen system of coordinates. 

\subsection{Weil conjectures} \label{sec:weil-conj}

Let $p$ be a prime number and $X$ be an algebraic variety over the finite field with $p$ elements $k=\F_p$.
The natural generating series for the numbers of points $\#X(\F_{p^s})$ turns out to be 
\[
\mathcal{Z}_X( T ) = \exp \left( \sum_{s=1}^\infty \#X(\F_{p^s}) \frac{T^s}{s}\right) \in \Q\lb T\rb.
\]
This formal series is called the \emph{local zeta function} of $X$. The naturality of the above definition of a zeta function is explained by the following fact.

\begin{theorem}[Dwork,~\cite{Dw-rationality}]\label{rationality-thm} The local zeta function of an algebraic variety\footnote{In this theorem algebraic varieties are assumed to be of \emph{finite type}, which means they can be covered by finite number of affine pieces. It is a standard assumption in the definition of varieties.} $X$ over $\F_p$ is rational:
\[
\mathcal{Z}_X( T ) \in \Q(T).
\]
\end{theorem}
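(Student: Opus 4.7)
The plan is to combine a reduction to exponential sums on tori with Dwork's $p$-adic trace formula, and then conclude via a Borel--Dwork style rationality criterion. That criterion states that a formal power series $f(T) \in \Z\lb T\rb$ is rational whenever it is meromorphic in a complex disk of radius $r_\infty$ and in a $p$-adic disk of radius $r_p$ with $r_\infty \cdot r_p \ge 1$. The easy half is complex: the elementary bound $\#X(\F_{p^s}) = O(p^{s \dim X})$ forces $\mathcal{Z}_X(T)$ to be holomorphic in some complex disk around the origin. The whole game is therefore to produce $p$-adic meromorphy, ideally on all of $\C_p$, while also verifying that $\mathcal{Z}_X(T)$ lies in $\Z\lb T\rb$ (this latter fact follows from the Euler-product formula $\mathcal{Z}_X(T) = \prod_{x}(1 - T^{\deg x})^{-1}$ over closed points of $X$, or equivalently by Möbius inversion on $\#X(\F_{p^s})$).

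First I would reduce to counting points on an affine hypersurface inside a torus. Covering $X$ by its standard affine charts and applying inclusion--exclusion, one writes $\mathcal{Z}_X(T)$ as an alternating product of zeta-like factors attached to affine varieties $\{f_1 = \cdots = f_m = 0\} \subset \A^n$. A second inclusion--exclusion, over the coordinate hyperplanes and over the subsets of defining equations, reduces everything to counting, for each $s \ge 1$, the number $N_s$ of solutions of a single equation $f(\v x) = 0$ with $\v x \in (\F_{p^s}^\times)^n$.

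The heart of the argument is Dwork's exponential-sum representation of $N_s$. Fix a nontrivial additive character $\Psi: \F_{p^s} \to \C^\times$ and use the orthogonality relation $\sum_{t \in \F_{p^s}} \Psi(ta) = p^s \delta_{a,0}$ to rewrite $p^s N_s$ as $(p^s-1)^n + \sum_{t \in \F_{p^s}^\times, \v x \in (\F_{p^s}^\times)^n} \Psi(t f(\v x))$. Dwork's splitting function $\theta(z) = \exp(\pi z - \pi z^p)$, with $\pi^{p-1} = -p$, is a $p$-adic analytic function on a disk of radius strictly greater than $1$ whose value at the Teichm\"uller lift of $a \in \F_{p^s}$ computes $\Psi(\Tr_{\F_{p^s}/\F_p}(a))$. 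Substituting this interpolation and summing over Teichm\"uller representatives expresses the exponential sum as $\Tr(\alpha^s)$ for a specific $p$-adic operator $\alpha$ acting on a Banach space of overconvergent Laurent series in $\v x$; then $\sum_s \Tr(\alpha^s)T^s/s = -\log \det(1 - T\alpha)$. Reassembling the inclusion--exclusion, $\mathcal{Z}_X(T)$ becomes an alternating product of such Fredholm determinants together with elementary factors of the form $(1 - p^k T)^{\pm 1}$, and is therefore meromorphic on all of $\C_p$.

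The main obstacle --- and the deepest ingredient of Dwork's method --- is proving that the operator $\alpha$ is \emph{completely continuous} on the appropriate space of overconvergent series, so that Serre's theory of Fredholm determinants applies and $\det(1 - T\alpha)$ is $p$-adically entire. This requires a delicate choice of Banach norm that simultaneously accommodates the growth of the coefficients of $\theta(tf(\v x))$ and extracts a contraction from the Frobenius-inverse map $\v x^{\v u} \mapsto \v x^{\v u/p}$ (vanishing unless each component of $\v u$ is divisible by $p$); only with the correct overconvergence radius do the two effects combine to give compactness. Once this is in place, the $p$-adic meromorphy of $\mathcal{Z}_X(T)$ on $\C_p$ combines with its complex holomorphy near the origin and its integrality to yield rationality via the Borel--Dwork criterion.
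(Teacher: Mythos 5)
The paper does not actually prove Theorem~\ref{rationality-thm}: it states it and cites Dwork's 1960 paper. The only proof-related content the paper supplies is the unnamed Proposition immediately afterward, showing by a two-level inclusion--exclusion argument (over an affine cover, then over the defining equations, using that $X_{f_1}\cup\cdots\cup X_{f_m}=X_{f_1\cdots f_m}$) that it suffices to prove rationality for affine hypersurfaces. So there is no ``paper's own proof'' to compare against; what you have written is a sketch of Dwork's actual argument, which the paper treats as a black box.

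Your sketch is substantially correct and in line with the standard exposition of Dwork's proof. The first paragraph (Borel--Dwork criterion, complex holomorphy from the trivial bound $\#X(\F_{p^s})=O(p^{s\dim X})$, integrality from the Euler product) is right. Your reduction step parallels the paper's Proposition and then goes one step further by stratifying $\A^n$ over coordinate hyperplanes to land on hypersurfaces inside $(\F_{p^s}^\times)^n$, which is what Dwork's exponential-sum machinery needs. The orthogonality manipulation giving $p^sN_s=(p^s-1)^n+\sum_{t\ne 0,\,\v x}\Psi(tf(\v x))$ is correct, the role of the splitting function $\theta(z)=\exp(\pi z-\pi z^p)$ (with $\pi^{p-1}=-p$) is correctly described, and passing to $\det(1-T\alpha)$ via $\sum_s\Tr(\alpha^s)T^s/s$ and isolating the elementary $(1-p^kT)^{\pm1}$ factors from the $(p^s-1)$-powers is the right reassembly. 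You also correctly flag the genuine hard point, namely that $\alpha$ is completely continuous on the right overconvergent Banach space so that Serre's Fredholm theory makes $\det(1-T\alpha)$ entire on $\C_p$. One small slip: the Borel--Dwork rationality criterion requires the strict inequality $r_\infty\cdot r_p>1$, not $r_\infty\cdot r_p\ge 1$ (for $r_\infty r_p=1$ there are non-rational counterexamples). In your application $r_p=\infty$, so this does not affect the argument, but the statement of the criterion should be corrected.
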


Writing this rational function as $\prod_{j=1}^k (1 - \beta_j T) / \prod_{i=1}^m (1 - \alpha_i T)$ we obtain expression~\eqref{Frob-roots} for the numbers of points on $X$ over the extensions of $\F_p$.

The statement of Theorem~\ref{rationality-thm} was the first one on the list of properties of local zeta functions which were conjectured by Andr\'e Weil in 1940s. The rest of Weil's conjectures concerns non-singular projective varieties $X$. If $X$ is such a variety of dimension~$n$ then
\[
\mathcal{Z}_X(T) = \prod_{i=0}^{2n} P_i(T)^{(-1)^{i-1}} = \frac{P_1(T) \ldots P_{2n-1}(T)}{P_0(T) \ldots P_{2n}(T)}
\]
where $P_i \in 1 + T\Z[T]$ are integral polynomials with the following properties:
\begin{itemize}
\item[(RH)] $P_0(T)=1-T$, $P_{2n}(T)=1-p^n T$ and for $1 \le i \le 2n-1$ polynomials $P_i(T)$ factor over $\C$ as $ P_i(T) = \prod_{j=1}^{\beta_i}(1-\alpha_{i,j}T)$ with $\alpha_{i,j} \in \overline{\Q}$ satisfying $|\alpha_{ij}|=p^{i/2}$.
\item[(FE)] $\mathcal{Z}_X(\frac{1}{p^n T}) = \pm (p^{n/2}T)^{\chi(X)} \mathcal{Z}_X(T)$ where $\chi(X)$ is the Euler characteristic of $X$; in particular, for every $\alpha_{i,j}$ we have that $p^n/\alpha_{i,j}$ is a reciprocal root of $P_{2n-i}(T)$.
\item[(TOP)] If $X$ can be lifted to a smooth variety defined over a number field then $\deg P_i(T)$ equals to the $i$th Betti number of the topological space of complex points of this variety.
\end{itemize}
Everything which was said above also holds true for varieties over finite fields $\F_q$ where $q$ is a power of $p$, and one should substitute $p$ with $q$ throughout the statement of the Weil conjectures. We restrict to the case of $\F_p$ for brevity. These conjectures were proved in 1960s and 1970s leading to the creation of $p$-adic and \'etale cohomology theories, with contributions by Bernard Dwork, Alexander Grothendieck, Michael Artin, Pierre Deligne and many other mathematicians.

\begin{exercise} Compute the local zeta functions of $\A^n$ and $\P^n$. 
\end{exercise}

\begin{exercise} Consider a non-singular projective conic (that is, a curve of degree two) $C \subset \P^2$. Assuming $C$ has a point over $\F_p$, show that its zeta function is given by $\frac{1}{(1-T)(1-pT)}$. 

For this purpose you may fill the details in the following argument over a field $K=\F_{p^s}$: every line through our specified point (defined over $k=\F_p$) will have one other point of intersection with the conic; the line is defined over $K$ if and only if this other point is defined over $K$; hence $\# C(K) = \#\P^1(K)=p^s+1$.    
\end{exercise}

Computation of zeta functions of cubic curves is more sophisticated. The zeta function of an elliptic curve $E \subset \P^2$ defined over $\F_p$ is given by
\be{ell-curve-zeta}
\mathcal{Z}_E( T ) = \frac{1-a_p T + pT^2}{(1-T)(1-pT)},
\ee
where the number $a_p = p+1 - \# E(\F_p) \in \Z$ is called the \emph{Frobenius trace} of $E$.  Helmut Hasse  proved that $|a_p| \le 2 \sqrt{p}$. Note that this implies that the discriminant of the quadratic polynomial in the denominator is $\le 0$. If the discriminant could be $0$ we would have $a_p=2 \sqrt p \notin \Z$. Hence the two reciprocal roots are complex conjugate and have absolute value $\sqrt p$ in agreement with (RH).   
The Sato--Tate conjecture is a statement about the distribution of numbers $a_p/(2\sqrt{p})$ in the interval $[-1,1]$ when $p$ varies and $E$ is an elliptic curve without complex multiplication defined over $\Q$. This conjecture was proved by Laurent Clozel, Michael Harris, Nicholas Shepherd-Barron, and Richard Taylor circa 2008 under mild assumptions. The proof was completed by Thomas Barnet-Lamb, David Geraghty, Harris, and Taylor in 2011. Several generalizations to other algebraic varieties are open. We wanted to mention this to motivate our interest in the knowledge of Frobenius roots.

\subsection{The role of hypersurfaces in proving rationality of local zeta functions}

Dwork's proof of Theorem~\ref{rationality-thm} used $p$-adic analysis. The building blocks of this proof are algebraic varieties given by one equation. Such varieties are called \emph{hypersurfaces}.  

\begin{proposition} It suffices to prove Theorem~\ref{rationality-thm} for affine hypersurfaces.
\end{proposition}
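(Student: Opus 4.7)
The plan is a two-step reduction. Throughout, the key observation is that $\mathcal{Z}$ converts additive identities among $\F_{p^s}$-point counts into multiplicative identities among zeta functions: if $\#X(\F_{p^s}) = \sum_i c_i \#Y_i(\F_{p^s})$ holds for all $s \ge 1$ with fixed integers $c_i$ and varieties $Y_i$, then $\mathcal{Z}_X(T) = \prod_i \mathcal{Z}_{Y_i}(T)^{c_i}$, and rationality is preserved by finite products and quotients.

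\emph{Step 1: reduce from a general variety to an affine one.} Since $X$ is of finite type, it contains a non-empty affine open subvariety $U$, and its closed complement $Z = X \setminus U$ is again a variety of finite type, of strictly smaller dimension or with strictly fewer top-dimensional irreducible components than $X$. From the set-theoretic disjoint union $X(\F_{p^s}) = U(\F_{p^s}) \sqcup Z(\F_{p^s})$ we obtain $\mathcal{Z}_X(T) = \mathcal{Z}_U(T) \cdot \mathcal{Z}_Z(T)$, and noetherian induction applied to $Z$ reduces the theorem to the case when $X$ itself is an affine variety.

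\emph{Step 2: reduce from an affine variety to an affine hypersurface.} Write $X = \{f_1 = \cdots = f_m = 0\} \subseteq \A^n$; the case $m=0$ is immediate because $\mathcal{Z}_{\A^n}(T) = (1-p^n T)^{-1}$. For each non-empty $I \subseteq \{1,\ldots,m\}$ let $H_I \subseteq \A^n$ be the affine hypersurface cut out by the single polynomial $\prod_{i \in I} f_i$. Applying inclusion--exclusion to the principal opens $D(f_i) = \A^n \setminus H_{\{i\}}$, whose union equals $\A^n \setminus X$, and using $\bigcap_{i \in I} D(f_i) = \A^n \setminus H_I$, one computes
\[
\#X(\F_{p^s}) = p^{sn} - \sum_{\emptyset \ne I}(-1)^{|I|+1}\bigl(p^{sn} - \#H_I(\F_{p^s})\bigr) = \sum_{\emptyset \ne I}(-1)^{|I|+1}\,\#H_I(\F_{p^s}),
\]
the last equality using the binomial identity $\sum_{\emptyset \ne I \subseteq \{1,\ldots,m\}}(-1)^{|I|+1} = 1$ for $m \ge 1$. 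Consequently $\mathcal{Z}_X(T) = \prod_{\emptyset \ne I} \mathcal{Z}_{H_I}(T)^{(-1)^{|I|+1}}$, so rationality of the zeta functions of affine hypersurfaces forces rationality of $\mathcal{Z}_X(T)$.

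The real content lies in Step~1: one has to invoke the geometric input that every finite-type variety admits an affine open stratum whose closed complement is itself a variety of finite type, so that the induction terminates. Step~2 is purely combinatorial and rests solely on the additivity--multiplicativity of $\mathcal{Z}$ under the disjoint-union and complement operations inside $\A^n$, made explicit by the inclusion--exclusion identity above.
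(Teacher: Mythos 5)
Your proof is correct and reaches the same conclusion, but Step~1 takes a genuinely different route from the paper. Where you stratify $X$ as a disjoint union of one affine open $U$ and its closed complement $Z$ and invoke noetherian induction, the paper covers $X$ by finitely many affine opens $X_1,\ldots,X_m$ whose pairwise (and higher) intersections are again affine, and applies inclusion--exclusion to that cover. The paper's version requires $X$ to be separated for those intersections to be affine (it spends a footnote on how to handle the non-separated case); your stratification avoids this subtlety entirely, since a closed subvariety of a finite-type variety is automatically a finite-type variety. That is a real advantage. One small caveat: for an \emph{arbitrary} nonempty affine open $U$ it is not true that $Z=X\setminus U$ has strictly smaller dimension or strictly fewer top-dimensional components (e.g. $X=\A^1\sqcup\A^1$ and $U$ a point in one factor). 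Either choose $U$ to be a dense affine open inside a single top-dimensional component away from the others, or simply run noetherian induction on closed subsets of $X$, which needs no dimension bookkeeping.

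Step~2 is essentially the paper's argument in dual form. The paper applies inclusion--exclusion directly to the hypersurfaces $X_{f_i}$: the union $X_{f_1}\cup\cdots\cup X_{f_m}=X_{f_1\cdots f_m}$ is a single hypersurface on the left, $\#X$ appears as the top intersection term on the right, and induction on $m$ closes the loop. You instead pass to the complementary principal opens $D(f_i)$, use $\bigcap_{i\in I}D(f_i)=\A^n\setminus H_I$ for the hypersurface $H_I=\{\prod_{i\in I}f_i=0\}$, and obtain the closed formula $\#X(\F_{p^s})=\sum_{\emptyset\ne I}(-1)^{|I|+1}\#H_I(\F_{p^s})$. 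This is the same inclusion--exclusion, but your closed-form identity avoids the auxiliary induction on $m$ and expresses $\mathcal Z_X$ directly as a finite alternating product of hypersurface zeta functions, which is arguably cleaner.
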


\begin{proof} An algebraic variety can be covered by a finite number of affine pieces whose intersections are also affine.\footnote{An algebraic variety $X$ is of finite type if it can be covered by a finite number of affine pieces. Under an additional assumption that $X$ is separated, this covering can be refined to satisfy the property that intersections are also affine. Here we give a proof for separated varieties. Without this assumption one needs to proceed in two steps. First, we deduce rationality of zeta functions for separated varieties. Second, we note that intersections of affine varieties are separated and apply the inclusion-exclusion argument using separated varieties instead of hypersurfaces.} If $X=\cup_{i=1}^m X_i$ is such a covering then the inclusion-exclusion principle reduces the count of points on $X$ to the count of points on its affine pieces:
\be{incl-excl}
\# (\cup_{i=1}^m X_i) (\F_{p^s}) = \sum_{k=1}^m (-1)^{k-1} \sum_{1 \le i_1 \le \ldots \le i_k \le m} \#( X_{i_1} \cap \ldots \cap X_{i_k} )(\F_{p^s}).   
\ee   

Now let $X$ be an affine variety of common zeroes of polynomials $f_1(\v x),\ldots,f_m(\v x) \in \F_p[x_1,\ldots,x_n]$. For a polynomial $f \in \F_p [x_1,\ldots,x_n]$ we denote the hypersurface of its zeroes by
\[
X_f = \{ \v x: f(\v x)=0 \}. 
\]
Then $X = X_{f_1} \cap X_{f_2} \cap \ldots \cap X_{f_m}$. We also note that the union of hypersurfaces is again a hypersurface: $X_{f_1} \cup X_{f_2} \cup \ldots \cup X_{f_m} = X_{f_1\cdot \ldots \cdot f_m}$. Let us apply inclusion-exclusion formula~\eqref{incl-excl} with $X_i = X_{f_i}$. Since in the left-hand side of~\eqref{incl-excl} we have a hypersurface, this formula allows to do induction on the number $m$ of equations. When $m=2$ one has
\[
\# (X_{f_1 \cdot f_2}) (\F_{p^s}) = \# X_{f_1}(\F_{p^s}) + \# X_{f_1}(\F_{p^s}) - \#( X_{f_1} \cap X_{f_2} )(\F_{p^s}),
\]  
which expresses the zeta function of an intersection of two hypersurfaces as a ratio of zeta functions of single hypersurfaces. Similarly, formula~\eqref{incl-excl} expresses zeta function of an intersection of $m$ hypersurfaces (the last term in the right-hand side) as a ratio of zeta functions of varieties given by less than $m$ equations.   
\end{proof}   

In the above proof one can also see that having an effective method of computing Frobenius roots for hypersurfaces would be sufficient, as Frobenius roots of general varieties occur among the roots of their affine pieces. Of course some cancellations would happen in the multiplication and division of zeta functions of pieces. 

\subsection{Weil cohomology and de Rham cohomology}

Andr\'e Weil suggested that his conjectures would follow from the existence of a suitable cohomology theory for varieties over finite fields, similar to the usual (singular) cohomology for complex varieties. For a variety $X$ over $\F_p$ there is the $p$th power Frobenius map on the set $X(\overline\F_p)$ of its points defined over all extensions of the ground field. Here $\overline \F_p$ denotes the algebraic closure of $\F_p$. The Frobenius map is expressed locally as raising coordinates of points to the $p$th power: $(x_1,\ldots,x_n) \mapsto (x_1^p,\ldots,x_n^p)$. Since $\F_{p^s} \subset \overline \F_p$ can be identified as the set of solutions to the eqiation $x^{p^s}=x$, we see that $X(\F_{p^s}) \subset X(\overline\F_p)$ is the set of fixed points of the $s$th power of the Frobenius map for every $s \ge 1$. In algebraic topology the number of fixed points of a continuous map can be worked out using the Lefschetz fixed-point theorem, given as an alternating sum of traces of the induced map on its cohomology groups. Weil anticipated existence of a cohomology theory which would asoociate to a variety over a finite field vector spaces over a field $F$ of characteristic zero (cohomology groups) along with a canonical linear map on them, traces of powers of which would count points over the extensions of the ground field. Such a map would be also called the Frobenius map, and we called the $\alpha_i$'s and $\beta_j$'s in formula~\eqref{Frob-roots} the Frobenius roots referring to this hypothetical Frobenius map. One could also call them the Frobenius eigenvalues, or simply the reciprocal roots of the zeta function.

As was mentioned in \S\ref{sec:weil-conj}, such cohomology theories were constructed in subsequent decades with coefficients in $F=\Q_\ell$, $\ell \ne p$ ($\ell$-adic or \'etale cohomology)\footnote{ There is also \'etale cohomology with $\ell=p$.} and $F=\Q_p$ ($p$-adic or rigid cohomology). 

The advantage of $p$-adic cohomology theory is that it allows to determine Frobenius maps explicitly. For non-singular affine and projective varieties $X$ over $\Q$ for almost all primes $p$ the $p$-adic cohomology groups (of the reduction of $X$ modulo $p$) are isomorphic to $H^i_{dR}(X,\Q) \otimes_\Q \Q_p$, where $H^i_{dR}$ is the $i$th algebraic de Rham cohomology group. Roughly, the de Rham cohomology groups of an affine variety are given by 
\[
H^i_{dR}(X,\Q) = \frac{\text{ closed differential $i$-forms on $X$} }{\text{ exact differential $i$-forms on $X$}}
\]    
with $0 \le i \le \dim X$. Thus the Frobenius map is a $p$-adic operation on equivalence classes of differential forms which is responsible for counting points on $X$ over all $\F_{p^s}$. In \S\ref{sec:cartier-section} we will construct such an operation explicitly in the case of affine hypersurfaces. 

\section{Congruences}\label{sec:elementary-congs}

\subsection{Warming up with $p$-adic numbers}

\begin{lemma}\label{rasing-to-power-p-cong} Let $a, b \in \Z$ and $s \ge 1$. If $a \is b \mod {p^s}$ then $a^p \is b^p \mod {p^{s+1}}$. 
\end{lemma}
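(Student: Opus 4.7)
The plan is to reduce the congruence $a \equiv b \pmod{p^s}$ to the equation $a = b + c p^s$ for some integer $c$, and then expand $a^p = (b+cp^s)^p$ using the binomial theorem. The goal is to check that every term on the right-hand side other than $b^p$ is divisible by $p^{s+1}$.

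First I would isolate the linear term: the $k=1$ contribution is $\binom{p}{1} b^{p-1} (cp^s) = b^{p-1} c \, p^{s+1}$, which is manifestly divisible by $p^{s+1}$. For the intermediate terms with $2 \le k \le p-1$, I would invoke the elementary fact that $p \mid \binom{p}{k}$ in that range (since $\binom{p}{k} = \tfrac{p!}{k!(p-k)!}$ has a $p$ in the numerator that cannot be cancelled by the denominator), and combine it with the factor $(cp^s)^k$ whose $p$-adic valuation is at least $sk \ge 2s \ge s+1$. So each such term is divisible by $p \cdot p^{2s} = p^{2s+1}$, more than enough. Finally, for $k=p$ the term is $(cp^s)^p = c^p p^{sp}$, divisible by $p^{sp}$, which is $\ge p^{s+1}$ because $p \ge 2$ and $s \ge 1$.

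There isn't really a hard step here: the argument is pure binomial bookkeeping once one remembers the divisibility $p \mid \binom{p}{k}$ for $1 \le k \le p-1$. The only subtlety to flag is that the assumption $s \ge 1$ is used to guarantee $2s \ge s+1$ and $sp \ge s+1$; the statement would still hold for $s=0$ (it reduces to Fermat's little theorem modulo $p$), but the clean uniform argument via binomial valuations uses $s \ge 1$. Collecting all contributions, one obtains
\[
a^p - b^p = b^{p-1} c \, p^{s+1} + (\text{terms divisible by } p^{s+1}),
\]
and concludes $a^p \equiv b^p \pmod{p^{s+1}}$, as required.
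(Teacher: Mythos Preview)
Your proof is correct and follows essentially the same approach as the paper's own proof: write $a = b + cp^s$, expand $(b+cp^s)^p$ binomially, and verify that every term beyond $b^p$ is divisible by $p^{s+1}$. The paper's version is marginally more economical (it handles all $i \ge 2$ at once via $p^{is}$ with $is \ge 2s \ge s+1$, without separately invoking $p \mid \binom{p}{i}$ or isolating the $i=p$ term), but the argument is the same.
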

\begin{proof} Let us write $a = b + c p^s$ and raise to the $p$th power:
\[
a^p = (b + c p^s)^p = b^p + \sum_{i=1}^p \binom{p}{i} b^{p-i} c^{i} p^{si} \is b^p \mod {p^{s+1}}
\] 
because for $i=1$ we have $\binom{p}{1}p^s=p^{1+s}$ and terms with higher $i$ are divisible by $p^{is}$ with $is \ge 2s \ge s+1$. 
\end{proof}

For $a \in \Z$, let us start with the known congruence $a^p \is a \mod p$ and raise it to $p$th power multiple times. The above lemma yields
\[
a^{p^{s}} \is a^{p^{s-1}} \mod {p^s} \text{ for } s \ge 1.
\]  
It follows that the sequence $a^{p^s}$ has a limit in the ring of $p$-adic integers $\Z_p$. 

\begin{exercise} For $a \in \Z$ denote 
\[
\tau(a) = \text{$p$-adic }\lim_{s \to \infty} a^{p^s} \in \Z_p.
\]  Show that
\begin{itemize}
\item[(i)] $\tau(a) \is a \mod p$,
\item[(ii)] $\tau(a)^p = \tau(a)$, and
\item[(iii)] $\tau(a)=\tau(b)$ if $a \is b \mod p$.
\end{itemize}
\end{exercise}

It is clear that $\tau(a)=0$ when $p|a$ and otherwise $\tau(a) \in \Z_p^\times$ is a $p$-adic unit satisfying $\tau(a)^{p-1}=1$. Due to~(iii) the map $\tau$ is naturally defined on the set of residues $\Z/p\Z=\F_p$. We also conclude from the above exercise that there are $p-1$ different $(p-1)$st roots of unity in $\Z_p$ and they are naturally indexed by non-zero residues modulo $p$. We thus obtain a multiplicative character    
\[
\tau: \F_p^\times \to \Z_p^\times,
\]
which is called the \emph{Teichm\"uller character}. 

The existence of $(p-1)$st roots of unity as well as many other algebraic numbers in $\Z_p$ could be also derived from a fundamental lemma due to Kurt Hensel. 

\begin{exercise} \begin{itemize}
\item[(i)] Prove \emph{Hensel's lemma}: 

\noindent Let $P(T) \in \Z_p[T]$ be a monic polynomial and $\overline \alpha \in \F_p$ be a simple root of $P$ modulo $p$, that is $P(\overline \alpha) = 0$ and $P'(\overline \alpha) \ne 0$. Then there exists a unique lift $\alpha \in \Z_p$, $\alpha \is \overline \alpha \mod p$ such that $P(\alpha)=0$.    

\item[(ii)] Use Hensel's lemma to construct the Teichm\"uller character $\tau$.
\end{itemize}
\end{exercise}

We would like to observe that Lemma~\ref{rasing-to-power-p-cong} holds for elements $a,b$ of any unital commutative ring if one reads congruences modulo $p^s$ as modulo the principal ideal $(p^s)$ of this ring. The proof given above also reads in this more abstract setting. In the following subsection we will attempt to run the above process of multiple raising to power $p$ taking as the input a multivariate polynomial $f(\v x)$ with coefficients in $\Z$ instead of an integer $a \in \Z$. This situation is different already modulo $p$ because we have $f(\v x)^p \is f(\v x^p) \mod p$ and therefore one can not conclude that the sequence $f(\v x)^{p^s}$ tends to a limit.

\subsection{Lifting Hasse--Witt matrices}\label{sec:lifting-hasse-witt-matrices}
Our main object is a multivariable Laurent polynomial 
\[
f(\v x) \in R[x_1^{\pm 1}, \ldots, x_n^{\pm 1}]
\]
with coefficients in a commutative characteristic zero ring $R$. About this ring we will assume that 
\[
\cap_{s \ge 1} \; p^s R = \{ 0 \}.
\]
In particular, this defines the $p$-adic topology on $R$ (two elements are close when their difference belongs to $p^sR$ for a high $s$) and one can embed $R$ into its \emph{$p$-adic completion} 
\[
\hat{R} = \underset{\leftarrow}\lim \; R/p^sR.
\]   

The \emph{Newton polytope} of $f(\v x)$ is the convex hull in $\R^n$ of the set of exponent vectors of monomials occuring in $f(\v x)$. It is denoted by $\Delta  = \Delta(f)$. We shall use the notation $\v x^{\v u} = x_1^{u_1} \ldots x_n^{u_n}$ for $\v u \in \Z^n$. One can then write $f = \sum_{\v u \in \Z^n} f_{\v u} \v x^\v u$ with coefficients $a_\v u \in R$. The support ${\rm supp}(f) = \{ \v u: f_\v u \ne 0\}$ is a finite subset of $\Z^n$, and the Newton polytope $\Delta \subset \R^n$ is precisely the convex hull of ${\rm supp}(f)$.

Let $\Delta^\circ$ denote the interior of $\Delta$ in the Euclidean topology and
\[
\Delta^\circ_\Z = \Delta^\circ \cap \Z^n
\]
be the set of internal integral points of the Newton polytope.

\begin{multicols}{2}
\includegraphics{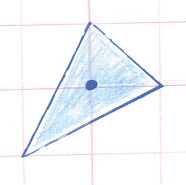}
\columnbreak

\begin{example} Here we see the Newton polytope $\Delta$ of the two variable 
 Laurent polynomial $f(\v x) = x_1 + x_2 + \frac1{x_1 x_2}$. Any other polynomial of the form 
  \[\bal
  f(\v x) = f_{(1,0)} \, x_1 + f_{(0,1)} \, x_2 \\
  + f_{(-1,-1)} \, \frac1{x_1 x_2} + f_{(0,0)}
  \eal\] with non-zero coefficients 
  \[f_{(1,0)},\; f_{(0,1)}, \; f_{(-1,-1)} \ne 0  \]
  will have same Newton polytope $\Delta$. The set of internal integral points of this polytope is $\Delta^\circ_{\Z} = \{ \v 0 \}$. 
 \end{example}
\end{multicols}

Assume that the number of internal integral points $g=\# \Delta^\circ_\Z$ is non-zero. We then define a sequence of $g \times g$ matrices $\{\beta_m; m \ge 1\}$ with entries in $R$ whose rows and columns are indexed by the elements of $\Delta^\circ_\Z$:  
\[
(\beta_m)_{\v u, \v v \in \Delta^\circ_\Z} = \text{ coefficient of } \v x^{\v v m - \v u} \text{ in } f(\v x)^{m-1}. 
\]
By convention, $\beta_1$ is the identity matrix. We are going to describe congruences satisfied by these matrices. 

A \emph{Frobenius endomorphism} $\sigma: R \to R$ is a ring endomorphism such that 
\[
\sigma(r) \is r^p \; \mod {pR} \quad \text{ for every } r \in R.
\]   
In other words $\sigma$ lifts the $p$th power endomorphism on the ring $R/pR$, and therefore it is also called a \emph{Frobenius lift}. For example, on $R=\Z$ the identity $\sigma=id$ is a Frobenius endomorphism. On the ring of polynomials $R=\Z[t]$ one can take $\sigma: r(t) \to r(t^p)$. 

In the following theorem Frobenius endomorphisms will be applied entry-wise to $g \times g$ matrices with entries in $R$.  

\begin{theorem}[\cite{HHW}]\label{HHW-main} For every Frobenius endomorphism $\sigma: R \to R$ one has
\[
\beta_{p^s} \is \beta_p \cdot \sigma(\beta_p) \cdot \ldots \cdot \sigma^{s-1}(\beta_p) \quad \mod {p }.
\]
Assuming that $\det(\beta_p)$ is invertible modulo $p$, one has congruenecs
\[
\beta_{p^{s+1}} \cdot \sigma(\beta_{p^s})^{-1} \is \beta_{p^{s}} \cdot \sigma(\beta_{p^{s-1}})^{-1} \quad \mod {p^s}
\]
for every $s \ge 1$.
\end{theorem}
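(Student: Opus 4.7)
The matrix entries are coefficient extractions: write $[\v x^\v c]g$ for the coefficient of $\v x^\v c$ in a Laurent polynomial $g$, so that $(\beta_m)_{\v u,\v v}=[\v x^{\v v m-\v u}]f^{m-1}$. Both congruences rest on factorisations of $f^{p^{s+1}-1}$ combined with the Frobenius lift property $f(\v x)^p\equiv\sigma(f)(\v x^p)\pmod p$ and on one Newton-polytope observation: if $\v u\in\Delta^\circ_\Z$ and $[\v x^{p\v w-\v u}]f^{p-1}\ne 0$, then $p\v w\in(p-1)\Delta+\{\v u\}\subset p\Delta^\circ$ (the interior of a Minkowski sum with an interior point), so $\v w\in\Delta^\circ_\Z$.

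For the first congruence I would split $f^{p^s-1}=f^{p-1}\cdot(f^p)^{p^{s-1}-1}$, reduce the second factor modulo $p$ to $\sigma(f^{p^{s-1}-1})(\v x^p)$, and extract the coefficient of $\v x^{\v v p^s-\v u}$. Using $[\v x^{p\v a}]\sigma(h)(\v x^p)=\sigma([\v x^\v a]h)$, the Cauchy product expands as
$$\sum_{\v w\in\Z^n}[\v x^{p\v w-\v u}]f^{p-1}\cdot\sigma\bigl([\v x^{\v v p^{s-1}-\v w}]f^{p^{s-1}-1}\bigr),$$
and the Newton-polytope observation collapses the sum to $\v w\in\Delta^\circ_\Z$, yielding $\beta_{p^s}\equiv\beta_p\cdot\sigma(\beta_{p^{s-1}})\pmod p$. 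Iterating on $s$ gives the first congruence.

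For the refined congruence I would iterate Lemma~\ref{rasing-to-power-p-cong} to upgrade the base identity: for any $g\in R[\v x^{\pm 1}]$ and $k\ge 1$, applying the lemma $k-1$ times to $g^p\equiv\sigma(g)(\v x^p)\pmod p$ yields $g^{p^k}\equiv\sigma(g)^{p^{k-1}}(\v x^p)\pmod{p^k}$. Applied to $g=f^{p-1}$ this gives $(f^{p-1})^{p^s}\equiv\sigma(f^{p^s-p^{s-1}})(\v x^p)\pmod{p^s}$, and combined with the factorisation $f^{p^{s+1}-1}=f^{p^s-1}\cdot(f^{p-1})^{p^s}$ produces the refined polynomial congruence
$$f^{p^{s+1}-1}\equiv f^{p^s-1}\cdot\sigma\bigl(f^{p^s-p^{s-1}}\bigr)(\v x^p)\pmod{p^s}.$$
Extracting the coefficient of $\v x^{\v v p^{s+1}-\v u}$ and expanding the Cauchy product, the sum splits into a ``diagonal'' part over $\v w=p^{s-1}\v y$ (which, after a second Newton-polytope reduction on $\v y$, feeds into a matrix product involving $\beta_{p^s}$) and ``off-diagonal'' parts whose $p$-adic sizes are controlled by the valuation estimate $v_p\bigl([\v x^\v w](f^{p-1})^{p^{s-1}}\bigr)\ge s-1-v_p(\v w)$ obtained from further iterations of Lemma~\ref{rasing-to-power-p-cong}. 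Rearranging using the invertibility of $\det(\beta_p)$ modulo $p$---which makes $\sigma(\beta_{p^{s-1}})$ invertible over $\hat R$---should produce the matrix identity
$$\beta_{p^{s+1}}\equiv\beta_{p^s}\cdot\sigma(\beta_{p^{s-1}})^{-1}\cdot\sigma(\beta_{p^s})\pmod{p^s},$$
equivalent to the claimed congruence.

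The hardest step is this final rearrangement: the off-diagonal Cauchy-product terms are only bounded by $O(p)$ individually, which by itself is insufficient for a mod $p^s$ identification. The point is that these contributions do not simply vanish but rather telescope into the factor $\sigma(\beta_{p^{s-1}})^{-1}$ on the right-hand side---this is what distinguishes the second statement from a mere iteration of the first, and the invertibility hypothesis on $\det(\beta_p)$ is precisely what makes the telescoping possible. Verifying that the combined diagonal and off-diagonal contributions assemble into the displayed matrix identity (and not a slightly different one) is the technical heart of the theorem.
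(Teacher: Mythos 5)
Your argument for the first congruence is correct: the factorisation $f^{p^s-1}=f^{p-1}\cdot(f^p)^{p^{s-1}-1}$, the base relation $f^p\equiv f^\sigma(\v x^p)\pmod p$, and the Newton-polytope fact $(p-1)\Delta+\Delta^\circ=p\Delta^\circ$ (which forces the surviving Cauchy-product index into $\Delta^\circ_\Z$) together give $\beta_{p^s}\equiv\beta_p\cdot\sigma(\beta_{p^{s-1}})\pmod p$, and iterating yields the stated product formula. This is the elementary route of \cite{HHW}; note that the present paper does not prove Theorem~\ref{HHW-main} directly but cites \cite{HHW} for the elementary argument and, separately, re-derives it from the Cartier-operation machinery (Theorem~\ref{DCI-Dwork-cong-thm} with $\mu=\Delta^\circ$), where a pivot matrix $\Lambda(\mu)$ is produced to which both $\beta_{p^s}\sigma(\beta_{p^{s-1}})^{-1}$ and $\beta_{p^{s+1}}\sigma(\beta_{p^s})^{-1}$ are shown congruent modulo $p^s$.

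Your treatment of the second congruence is incomplete, and you acknowledge it. The polynomial congruence $f^{p^{s+1}-1}\equiv f^{p^s-1}\cdot\sigma\bigl(f^{p^s-p^{s-1}}\bigr)(\v x^p)\pmod{p^s}$ is correct, but the Newton-polytope collapse that closed the first argument no longer applies: after extracting the coefficient of $\v x^{\v v p^{s+1}-\v u}$, the surviving first-factor coefficients are $[\v x^{p\v y-\v u}]f^{p^s-1}$ with $\v y$ ranging over $p^{s-1}\Delta^\circ\cap\Z^n$, a far larger index set than $\Delta^\circ_\Z$. You propose to split this into a diagonal piece (with $\v y$ divisible by $p^{s-1}$) and an off-diagonal remainder controlled by a valuation estimate, and then to argue that the off-diagonal contributions ``telescope'' into exactly the factor $\sigma(\beta_{p^{s-1}})^{-1}$. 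This is precisely the content of the theorem and you never carry it out --- the word ``should'' in your writeup marks where the derivation stops. Asserting that the off-diagonal terms assemble into that inverse, rather than into some other correction, is a statement of the desired conclusion, not a proof; and the valuation bound you cite is insufficient by itself, since individual off-diagonal terms are only $O(p)$ while the claim is a $p^s$-adic identity. Until the combinatorics of how the off-diagonal sum produces $\sigma(\beta_{p^{s-1}})^{-1}$ is made explicit, the second congruence remains unestablished.
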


Some remarks are needed to the statement of this theorem. In the first congruence $\mod p$ should be read as $\mod {p R^{g \times g}}$, where $R^{g \times g}$ is the ring of $g \times g$ matrices with entries in $R$. Next, we would like to note the following. 

\begin{remark}\label{p-adic-invertibility-rmk} An element $r \in R$ is invertible in the $p$-adic completion ring $\hat{R} \supseteq R$ if and only if its image in $R/pR$ is invertible. The implication $\Rightarrow$ is clear. For the inverse implication $\Leftarrow$ we observe that if $r,u, w \in R$ are such that $r u = 1 + p w$ then the multiplicative inverse of $r$ in $\hat{R}$ is given by $r^{-1} = u (1 + p w)^{-1}=u\sum_{m \ge 0}(-1)^m p^m w^m$. 
\end{remark}

If $\det(\beta_p)$ is invertible in $R/pR$ then all $\det(\beta_{p^s})$ are also invertible in $R/pR$ because the first congruence implies that $\det(\beta_{p^s}) \is \det(\beta_p)^{1+p+\ldots+p^{s-1}} \mod p$. Then $\beta_{p^s}$ and $\sigma(\beta_{p^s})$ are invertible as matrices with entries in $\hat{R}$, and the second congruence is valid modulo $\mod {p^s \hat{R}^{g \times g}}$. One can multiply it by the invertible element $\sigma(\det(\beta_{p^s})\det(\beta_{p^{s-1}}))$ to have a congruence modulo $p^s R^{g \times g}$.

The matrix $\beta_p$ is known as the \emph{Hasse--Witt matrix}. The reader can find a geometric interpretation of $\beta_p \mod p$ for non-singular projective hypersurfaces in~\cite{Katz-cong-zeta}. In the cited paper Nicholas Katz proves that the characteristic polynomial of $\beta_p$ is congruent modulo $p$ to a factor of the local zeta function over $\F_p$. If $\beta_p$ is invertible modulo $p$, which is the case whenever $\det(\beta_p)$ is an invertible element of $R/pR$, we say that the \emph{Hasse--Witt condition holds}, or that $p$ is an \emph{ordinary prime} for our hypersurface $f(\v x)=0$. A straightforward consequence of Theorem~\ref{HHW-main} is that when the Hasse--Witt condition holds then there exists the limiting matrix
\[
\Lambda_p = \text{ $p$-adic } \lim_{s \to \infty} \beta_{p^{s}} \cdot \sigma(\beta_{p^{s-1}})^{-1} \in \hat{R}^{g \times g}. 
\]  

\begin{exercise} Read the elementary proof of Theorem~\ref{HHW-main} in~\cite[\S 2-3]{HHW}. Unfortunately, that proof does not give any clues about the nature of matrices $\Lambda_p$.
\end{exercise}

\bigskip
In the case of $R=\Z$ we have associated to the Laurent polynomial $f(\v x) \in \Z[x_1^{\pm 1}, \ldots, x_n^{\pm 1}]$ a collection of $p$-adic matrices $\Lambda_p \in \Z_p^{g \times g}$ for all ordinary primes $p$.  Note that 
\[
\Lambda_p \is \beta_p \; \mod p
\]
and therefore the eigenvalues of $\Lambda_p$ are $p$-adic units. Numerical experiments in~\cite{HHW} suggested that, in the case $R=\Z$ the eigenvalues of $\Lambda_p$ are Frobenius roots of the hypersurface $f(\v x)=0$ over the finite field $\F_p$.    This claim was proved in~\cite[Thm A.1 and Rmk A.2]{DCI} and, under addtional geometric assumptions, also in~\cite{HLYY}. The authors of the latter paper used the technique from the paper ~\cite{Katz}, results of which were supporting the conjecture in~\cite{HHW} in the case ofhomogeneous polynomials defining a smooth projective hypersurface.  The fact that the eigenvalues of $\Lambda_p$ are Frobenius roots will be later demonstrated as a by-product of the theory developed in \S\ref{sec:cartier-section}-\ref{sec:periods}.  

In \S\ref{sec:cartier-section}-\ref{sec:periods} we will work with differential forms on the complement of the hypersurface $f(\v x)=0$. We will introduce a $p$-adic operation on them, the Cartier operation. Assuming the Hasse--Witt condition holds, we will construct a free $\hat{R}$-module of rank $g$ with a natural basis corresponding to internal integral points of the Newton polytope $\Delta$. We call this module the~\emph{unit-root crystal} associated to our hypersurface and a subset $\Delta^\circ \subset \Delta$. We will prove that $\Lambda_p$ is a matrix of the Cartier operation on the unit-root crystal (Theorem~\ref{DCI-Dwork-cong-thm} for $\mu = \Delta^\circ$). In \S\ref{sec:periods} this will give a conceptual proof of congruences in Theorem~\ref{HHW-main}, which is alternative to the above mentioned elementary proof given in~\cite{HHW}. 

In the rest of this section we will overview some examples in which matrices $\Lambda_p$ can be given explicitly. Their proofs will be postponed to later sections, when we will have all necessary tools in our hands.

\subsection{Atkin and Swinnerton-Dyer congruences} Consider 
\[
f(x,y)=y^2-x^3-A x-B \;\text{ with }\; A,B \in \Z
\]
such that the curve $f(x,y)=0$ is non-singular (elliptic curve, see Exercise~\ref{ell-curve-exercise}). Consider the expansion of the differential form $\omega=\frac12 dx/y$ in the local parameter $u=-x/y$ near the infinite point $O$:
\[
\frac{dx}{2y} = (1 + 2A u^4 + 3B u^6 + 6 A^2 u^8 + 20 BAu^{10}+\ldots)\, du = \left(\sum_{m \ge 1} \alpha_m u^{m} \right) \frac{du}{u}.
\] 
It turns out that $\alpha_m$ is the coefficient of $x^{m-1}$ in $(x^3 + Ax + B)^{(m-1)/2}$ when $m$ is odd and $\alpha_m=0$ when $m$ is even. In\cite[Example 0.4(a)]{Stienstra} Jan Stienstra attributes this observation to Frits Beukers.

\begin{exercise} Let $p$ be an odd prime not dividing the discriminant of $x^3+Ax+B$. Check that $\alpha_p \is a_p \mod p$, where $a_p = p-\#\{ (x,y)\in \F_p^2 : f(x,y)=0 \}$ is the Frobenius trace of this elliptic curve,~\eqref{ell-curve-zeta}.  
\end{exercise}

More generally, these expansion coefficients satisfy the Atkin and Swinnerton-Dyer congruences (\cite{ASD}):
\be{ASD-cong}
\alpha_m - a_p \,\alpha_{m/p} + p \, \alpha_{m/p^2} \is 0 \quad \mod {p^{\ord_p(m)}}.
\ee
Our $1 \times 1$ matrices $\beta_m$ are given by
\[
\beta_m = \text{ coefficient of } x^{m-1}y^{m-1} \text{ in } (y^2-x^3-A x-B)^{m-1} = (-1)^{(m-1)/2} \binom{m-1}{\frac{m-1}2} \alpha_m.
\]
Thus $\beta_p \is \alpha_p \is a_p \mod p$, and the Hasse--Witt condition holds for those primes $p$ for which this elliptic curve is ordinary. In this case Theorem~\ref{HHW-main} shows that $\beta_{p^s} \in \Z_p^\times$ for all $s \ge 1$. Since $\binom{p^s-1}{\frac{p^s-1}2} \in \Z_p^\times$, we conclude that $\alpha_{p^s} \in \Z_p^\times$. From the properties of the $p$-adic gamma function (\cite[\S IV.2]{Koblitz}) it is clear that
\[
\binom{p^s-1}{\frac{p^s-1}2} / \binom{p^{s-1}-1}{\frac{p^{s-1}-1}2} = \frac{\Gamma_p(p^s)}{\Gamma_p(\frac{p^s+1}2)^2} \overset{(p^s)}\is \frac{\Gamma_p(0)}{\Gamma_p(\frac{1}2)^2} = (-1)^{(p+1)/2} .
\] 
Multiplying by $(-1)^{\frac{p^s-p^{s-1}}{2}} = (-1)^{\frac{p-1}{2}}$ we obtain 1. Therefore 
\[
\beta_{p^{s}} / \beta_{p^{s-1}} \is \alpha_{p^{s}} / \alpha_{p^{s-1}} \quad  \mod {p^s}
\]
and $\Lambda_p = \lim_{s \to \infty} \beta_{p^{s}} / \beta_{p^{s-1}} = \lim_{s \to \infty} \alpha_{p^{s}} / \alpha_{p^{s-1}} $
Dividing the Atkin--Swinnerton-Dyer congruence $\alpha_{p^s} - a_p \alpha_{p^{s-1}} + p \alpha_{p^{s-2}} \is 0 \mod {p^s}$ by the $p$-adic unit $\alpha_{p^{s-2}}$ and tending $s \to \infty$ we conclude that the limit $\Lambda_p$ satisfies 
\[
\Lambda_p^2 - a_p \Lambda_p + p = 0.
\]
Hence $\Lambda_p \in \Z_p^\times$ is a Frobenius root of our ordinary elliptic curve and one has
\[
\# E(\F_{p^s}) = 1 + p^s - \Lambda_p^s - (p/\Lambda_p)^s, \quad s \ge 1.
\] 
The fact that $\Lambda_p$ is a Frobenius root will also follow from our constructions, see Corollary~\ref{eigenvalues-of-cartier-cor}, and Atkin and Swinnerton--Dyer congruences can be back-engineered from this fact. 

In~\cite{Stienstra} Stienstra generalizes the Atkin and Swinnerton--Dyer congruences congruences to a wide class of projective varieties.    

\subsection{Dwork congruences}\label{sec:dwork-cong}
Take 
\[
f(\v x)=1-t g(\v x)
\]
with $g(\v x) \in \Z[x_1^{\pm 1}, \ldots, x_n^{\pm 1}]$. Assume that the Newton polytope $\Delta$ of $g(\v x)$ has only one interior integral point, and that this point is the origin: $\Delta_\Z^\circ = \Delta^\circ \cap  \Z^n = \{\v 0 \}$. In this case $\Delta$ is also the Newton polytope of $f(\v x)$. One can start with $R=\Z[t]$. 

For $i \ge 0$ we denote by $c_i$ the constant term of $g(\v x)^i$. Consider the formal series $\gamma(t)=\sum_{i=0}^\infty c_i t^i \in \Z[[t]]$ and its truncations $\gamma_m(t) = \sum_{i=0}^{m-1} c_i t^i$. We have
\[
\beta_p(t) = \text{ constant term in } (1 - t g(\v x))^{p-1} = \sum_{i=0}^{p-1} (-1)^i \binom{p-1}{i} c_i t^i \is \gamma_p(t) \; \mod p.
\]
The Hasse-Witt condition will be satisfied if we add the inverse of the polynomial $\beta_p(t)$ to our ring, so we take the bigger ring $R=\Z[t, \beta_p(t)^{-1}] \subset \Z\lb t\rb$. Its $p$-adic completion $\hat{R} \subset  \Z[t,\beta_p(t)^{-1}]\,\hat{\;} \subset \Z_p\lb t\rb$ consists of series that can be approximated $p$-adically by rational functions whose denominators are powers of $\beta_p(t)$. As a consequence if Theorem~\ref{HHW-main} there exists the $p$-adic limit
\[
\Lambda_p(t) = \text{$p$-adic }\; \lim_{s \to \infty} \frac{\beta_{p^s}(t)}{\beta_{p^{s-1}}(t^p)} \in  \Z[t,\beta_p(t)^{-1}]\,\hat{\;}.
\]
The computation of this $p$-adic limit can be achieved by careful analysis of congruence properties for the binomial coefficients $(-1)^i\binom{p^s-1}{i}$ occuring in $\beta_{p^s}(t)$ in contrast to $\gamma_{p^s}(t)$, and one finds that in fact
\[
\Lambda_p(t) = \frac{\gamma(t)}{\gamma(t^p)}.
\]
The reader may find a similar limit computation in~\cite[Example 5.5]{DCI}. We prefer to omit the details here, as they will distract us from our main topic. Moreover, in the proof of Theorem~\ref{frob-unit-root-p-adic-analytic-thm} we will be able to obtain this expression for $\Lambda_p(t)$ in a more conceptual way. One can conclude that
\be{Dwork-unit-root-general}
\frac{\gamma(t)}{\gamma(t^p)} \in \Z[t,\gamma_p(t)^{-1}]\,\hat{\;},
\ee
which follows from our expression for $\Lambda_p(t)$ and the fact that the $p$-adic completions $p$-adic completions $\Z[t,\beta_p(t)^{-1}]\,\hat{\;}$ and $\Z[t,\gamma_p(t)^{-1}]\,\hat{\;}$ are equal (see the  exercise below).

\begin{exercise}\label{frob-lift-exercise} 
Let $s(t) \in \Z[t]$ be a polynomial such that $p \nmid s(0)$. Consider the $p$-adic completion $\Z[t,s(t)^{-1}]\,\hat{\;}$:

\begin{itemize}
\item[(i)]embed this ring into $\Z_p\lb t \rb$;
\item[(ii)] show that this ring depends only on $s(t) \mod p$, that is for any $\tilde s(t)\in \Z[t]$ such that $\tilde s(t) \is s(t) \mod p$ one has $\Z[t,s(t)^{-1}]\,\hat{\;} = \Z[t,\tilde s(t)^{-1}]\,\hat{\;}$;
\item[(iii)] show that the Frobenius lift $\sigma: t \mapsto t^p$ on $\Z_p\lb t \rb$ preserves the ring $\Z[t,s(t)^{-1}]\,\hat{\;}$.
\end{itemize}
\end{exercise}

\bigskip

Let us look again at~\eqref{Dwork-unit-root-general}. It is a very non-trivial conclusion that the series $\gamma(t)/\gamma(t^p)$ can be approximated $p$-adically by rational functions. This fact and particular examples of such approximations were discovered by Bernard Dwork for certain class of hypergeometric series $\gamma(t)$, see e.g.~\cite[\S 5]{Dw-deformation} and~\cite[\S 3]{Dw-p-adic-cycles}. His results can be generalized in our setup in the following way.  

\begin{theorem}[Dwork's congruences,~\cite{MV-16} and~\cite{DCII}]\label{dwork-congs-them}  Let $g(\v x) \in \Z[x_1^{\pm 1}, \ldots, x_n^{\pm 1}]$ be a Laurent polynomial such that the origin $\v 0 \in \R^n$ is the only interior integral point in its Newton polytope. Consider the generating series of constant terms of its powers
\[
\gamma(t)=\sum_{i=0}^\infty c_i t^i, \qquad c_i = \text{ coefficient of $\v x^\v 0$ in } g(\v x)^i.
\]
Then for any prime $p$ and integer $m \ge 1$ we have
\[
\frac{\gamma(t)}{\gamma(t^p)} \is \frac{\gamma_{m}(t)}{\gamma_{m/p}(t^p)} \qquad \mod {p^{\ord_p(m)}},
\]        
where  $\gamma_m(t) = \sum_{i=0}^{m-1} c_i t^i$ denotes the truncation of the beginning $m$ terms in $\gamma(t)$.
\end{theorem}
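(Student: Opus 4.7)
Plan: The strategy combines Theorem~\ref{HHW-main} (in its $1 \times 1$ form) with binomial coefficient bookkeeping. Setting $f(\v x) = 1 - t g(\v x)$ and working over $R = \Z[t, \beta_p(t)^{-1}]$, the scalar Hasse--Witt ``matrix'' is
\[
\beta_m(t) = [\v x^\v 0](1-tg(\v x))^{m-1} = \sum_{i=0}^{m-1}(-1)^i \binom{m-1}{i} c_i t^i \in \Z[t].
\]
Since $\beta_p(0) = 1$ is a unit modulo $p$, the Hasse--Witt condition holds automatically, and Exercise~\ref{frob-lift-exercise} supplies the Frobenius lift $\sigma: t \mapsto t^p$ on $\hat R$. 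Theorem~\ref{HHW-main} then produces the $p$-adic limit $\Lambda_p(t) = \lim_s \beta_{p^s}(t)/\beta_{p^{s-1}}(t^p) \in \hat R$ together with the approximation
\[
\Lambda_p(t) \is \frac{\beta_{p^s}(t)}{\beta_{p^{s-1}}(t^p)} \mod{p^s}.
\]

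The heart of the proof is upgrading this to the $\gamma$-version, i.e., showing
\[
\beta_{p^s}(t)\,\gamma_{p^{s-1}}(t^p) \is \gamma_{p^s}(t)\,\beta_{p^{s-1}}(t^p) \mod{p^s}.
\]
Once established, this identifies $\Lambda_p(t) = \gamma(t)/\gamma(t^p)$ and proves the theorem for $m = p^s$. The identity $(-1)^i\binom{p^s-1}{i} = \prod_{j=1}^i (1 - p^s/j)$ shows that each binomial is at $p$-adic distance $p^{s-\ord_p(j)}$ from $1$ per factor; the essential observation is that when one decomposes the index $i = pa + b$ with $0 \le b < p$, the ``heavy'' factors $j = p, 2p, \ldots, ap$ in the product $\prod_{j=1}^{i}(1-p^s/j)$ contribute the same $p$-adic deviation as the corresponding $\binom{p^{s-1}-1}{a}$ appearing on the other side of the equality via $\beta_{p^{s-1}}(t^p)$. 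A careful term-by-term comparison, in the spirit of \cite[Example~5.5]{DCI}, then forces agreement modulo $p^s$.

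To extend from $m = p^s$ to general $m$ with $\ord_p(m) = s$, I would invoke coefficient-level Dwork congruences: iterating Lemma~\ref{rasing-to-power-p-cong} gives $g(\v x)^{p^s} \is g(\v x^{p^s}) \mod{p^s}$, and taking constant terms of $g(\v x)^{p^s j + r} \is g(\v x^{p^s})^j g(\v x)^r \mod{p^s}$ (for $0 \le r < p^s$) under the hypothesis $\Delta^\circ_\Z = \{\v 0\}$ produces enough relations among the $c_i$ to match the truncation $\gamma_m(t)/\gamma_{m/p}(t^p)$ with $\gamma_{p^s}(t)/\gamma_{p^{s-1}}(t^p)$ modulo $p^s$. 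The main obstacle is the binomial cancellation step: individual quantities $(-1)^i\binom{p^s-1}{i}$ are only $\equiv 1 \mod p$, so the extraction of modulus $p^s$ from the ratio structure requires bookkeeping that tracks the base-$p$ digits of $i$ and the matching between numerator and denominator. A conceptually cleaner route, pursued in \S\ref{sec:cartier-section}--\S\ref{sec:periods}, is to realize $\Lambda_p(t)$ as the eigenvalue of the Cartier operator on the unit-root crystal (Theorem~\ref{DCI-Dwork-cong-thm}), from which both $\Lambda_p(t) = \gamma(t)/\gamma(t^p)$ and the refined Dwork congruences follow as corollaries of the functorial properties of the Cartier operation, bypassing the combinatorial gymnastics entirely.
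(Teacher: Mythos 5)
Your primary route (via Theorem~\ref{HHW-main} plus binomial bookkeeping) is precisely the route the paper declines to carry out in \S\ref{sec:dwork-cong} (``We prefer to omit the details here, as they will distract us from our main topic''), and your sketch of it has a genuine gap. The congruence $(-1)^i\binom{p^s-1}{i}\equiv(-1)^a\binom{p^{s-1}-1}{a}\pmod{p^s}$ that you extract from the factorization $\prod_{j\le i}(1-p^s/j)$ is correct, but it does \emph{not} by itself yield your key identity $\beta_{p^s}(t)\,\gamma_{p^{s-1}}(t^p)\equiv\gamma_{p^s}(t)\,\beta_{p^{s-1}}(t^p)\pmod{p^s}$. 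Already for $p=2$, $s=2$ the coefficient of $t^2$ in the difference is $2c_0(c_1+c_2)$ and the coefficient of $t^3$ is $-2(c_1^2+c_0c_3)$: these are $\equiv 0\pmod 4$ only because the constant terms themselves satisfy $c_2\equiv c_1$ and $c_0c_3\equiv c_1^2\pmod 2$, which are consequences of $g(\v x)^p\equiv g(\v x^p)\pmod p$ together with the hypothesis $\Delta^\circ_\Z=\{\v 0\}$. You invoke these constant-term congruences only later, for the extension from $m=p^s$ to general $m$, but they are already indispensable in your ``heart of the proof'' step for $m=p^s$. The binomial arithmetic alone says nothing about the sequence $(c_i)$, so as written the step does not close. (Proving the needed constant-term congruences correctly, and interleaving them with the binomial arithmetic to modulus $p^s$ rather than just $p$, is exactly the technical content of the combinatorial proof in \cite{MV-16}.)

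Your closing paragraph does identify the paper's actual proof: it is the period-map argument, not the elementary one. But the pointer is to Theorem~\ref{DCI-Dwork-cong-thm}, which produces congruences for the $\beta$-matrices (coefficients in $f(\v x)^{m-1}$), not for the truncations $\gamma_m$. The intended mechanism is Exercise~\ref{Dwork-cong-exercise}: verify the pleasant identity $\gamma_{p^s m'}(t)=(\rho_{s,1}-\rho_{s,(tg(\v x))^{m'}})(1/f(\v x))$ and apply these period maps modulo $p^s$ from Example~\ref{generalized-exp-coeffs} to the Cartier identity $\cartier(1/f)\equiv\lambda/f^\sigma\pmod{p\,d\Omega_{\rm formal}}$ of~\eqref{lambda-in-dwork-cong}, where $\lambda=\gamma(t)/\gamma(t^p)$ by Theorem~\ref{frob-unit-root-p-adic-analytic-thm}. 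Because $\rho_{s,\ell}$ kills $d\Omega_{\rm formal}$ modulo $p^s$ and satisfies $\rho_{s,\ell}\equiv\rho_{s-1,\ell^\sigma}\circ\cartier\pmod{p^s}$, one gets $\gamma_m(t)\equiv\lambda\,\gamma_{m/p}(t^p)\pmod{p^{\ord_p(m)}}$ in one line, with no case analysis on the base-$p$ digits of $m$ and no binomial cancellations. So: your instinct that the Cartier route is cleaner is correct, but the concrete reference and the mechanism by which $\gamma_m$ (rather than $\beta_m$) appears should be corrected.
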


The proof of this theorem will become apparent in \S\ref{sec:periods-mod-p-s} (see~Exercise~\ref{Dwork-cong-exercise}). Dwork noted that~\eqref{Dwork-unit-root-general} yields a way to evaluate $\gamma(t)/\gamma(t^p)$ at certain points $t_0 \in \Z_p^\times$ while the series may not be convergent on the $p$-adic unit circle. Namely, when $\gamma_p(t_0) \ne 0 \mod p$ one can evaluate
\[
\Lambda_p(t_0) = \frac{\gamma(t)}{\gamma(t^p)} \Big|_{t=t_0} = \text{ $p$-adic} \lim  \frac{\gamma_{p^s}(t_0)}{\gamma_{p^{s-1}}(t_0^p)}.
\]  
Suppose that $t_0^p = t_0$ or, equivalently, $t_0 = \tau(a)$ is the Teichm\"uller lift of $a \in \F_p$. Then the above evaluation is compatible with the computation of the limiting value $\Lambda_p$ for the hypersurface $1-t_0 g(\v x)=0$. We conclude that whenever $\gamma_p(a) \ne 0$ then $\Lambda_p(\tau(a))$ is a Frobenius root of the hypersurface $1-a g(\v x)=0$ over $\F_p$. This fact demonstrates the relation between period functions, like $\gamma(t)$, and local zeta functions of fibres of this family. Initially observed by Dwork in~\cite{Dw-deformation}, this phenomenon led to understanding of how $p$-adic Frobenius matrices behave in families. We will return to this topic in \S\ref{sec:frob-structure}.  

\subsection{Formal expansions of rational functions}\label{sec:formal-exp}

Let $f(\v x) = \sum f_\v u \v x^\v u \in R[x_1^{\pm 1},\ldots,x_n^{\pm 1}]$ and $\Delta \subset \R^n$ be its Newton polytope. There are different ways to expand rational functions $h(\v x)/f(\v x)^m$ with $h(\v x) \in R[x_1^{\pm 1},\ldots,x_n^{\pm 1}]$ and $m \ge 1$ into formal Laurent series. Such expansions will be useful for us in the constructions in the following sections.

Let us describe an expansion procedure with respect to vertices of $\Delta$. Pick a vertex $\v b \in \Delta$ and assume that $f_\v b \in R^{\times}$. Then

\[\bal
\frac{h(\v x)}{f(\v x)^m} &= \frac{h(\v x)}{f_{\v b}^m \v x^{m \v b}(1 + \ell(\v x))^m} = \frac{h(\v x) \v x^{-m \v b}}{f_{\v b}^m} \sum_{s \ge 0} \binom{-m}{s} \ell(\v x)^s = \sum_{\v v \in \Z^n} c_{\v v} \v x^{\v v}.
\eal
\]
Here $\binom{-m}{s} = \frac{-m \cdot (-m-1) \cdot \ldots \cdot (-m-(s-1))}{s!} = (-1)^s \binom{s+m-1}{m-1}$ are integers. 

\begin{multicols}{2}
\includegraphics[width=4cm]{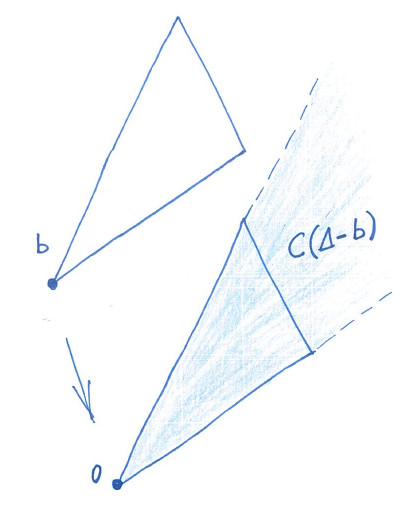}
\columnbreak

\noindent In this computation $\ell(\v x)$ is the Laurent polynomial $\ell(\v x) = f_{\v b}^{-1} \v x^{-\v b} f(\v x) - 1$ supported in the cone $C(\Delta - \v b)$. Since $\ell(\v x)$ has no constant term, only finitely many summands $\ell(\v x)^s$ will contribute to each monomial $\v x^\v v$, and hence the above series is well-defined.

For later we would like to remark that if $supp(h) \subset m \Delta$ then the formal expansion $\sum_{\v v \in \Z^n} c_{\v v} \v x^{\v v}$ is itself supported in the cone $C(\Delta - \v b)$.

\end{multicols}

Examples of such formal expansions will be given in the next \S.

\subsection{Gauss congruences}\label{sec:gauss-cong}
Let $f(\v x), h(\v x) \in \Z[x_1^{\pm 1},\ldots,x_n^{\pm 1}]$, $\Delta$ is the Newton polytope of $f(\v x)$ and $\supp(h) \subseteq \Delta$. We expand the ratio of these functions at one of the vertices of $\Delta$ as was explained in \S\ref{sec:formal-exp}:
\be{gauss-cong-expansion}
\frac{h(\v x)}{f(\v x)} = \sum_{\v v \in \Z^n} c_\v v \v x^\v v.
\ee

\begin{theorem}{(Gauss' congruences,~\cite{BHS18} and~\cite{DCI})}\label{gauss-thm} If the only integral points in $\Delta$ are its vertices then for every odd prime $p$ which does not divide any of the coefficients of $f(\v x)$ the coefficients in the expansion~\eqref{gauss-cong-expansion} satisfy
\[
c_{\v v} \is c_{\v v/p} \; \mod {p^{\ord_p(\v v)}}, \qquad \forall \v v \in \Z^n.
\]
\end{theorem}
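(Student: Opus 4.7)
My plan is to expand $h/f$ around a vertex and convert the congruence into a $p$-adic statement via iterated Dwork. By translating and rescaling (the hypothesis $p\nmid f_\v b$ makes this legal $p$-adically) I may assume the expansion is performed at the vertex $\v 0 \in V$ with $f_\v 0 = 1$, so $G := h/f = \sum c_\v v \v x^\v v$ has integer coefficients and $\v 0 \in V = \Delta\cap\Z^n$. The engine is Lemma~\ref{rasing-to-power-p-cong} iterated: $f(\v x)^{p^s} \equiv f(\v x^p)^{p^{s-1}} \pmod{p^s}$. Plugging this into the identity $G = hf^{p^s-1}/f^{p^s}$ and inverting in the $p$-adic completion of $\Z\lb\v x\rb$ gives $G \equiv hf^{p^s-1}/f(\v x^p)^{p^{s-1}} \pmod{p^s}$ as formal Laurent series. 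Extracting the coefficient at $\v x^{p\v v}$ and invoking the projection formula $\psi_p(A\cdot \sigma B) = \psi_p(A)\, B$ --- for $\sigma(\v x) = \v x^p$ and $\psi_p(\sum a_\v u \v x^\v u) := \sum a_{p\v u}\v x^\v u$ --- produces the master identity
\be{gauss-master}
c_{p\v v} \;\equiv\; [\v x^\v v]\,\frac{\psi_p(hf^{p^s-1})}{f^{p^{s-1}}} \pmod{p^s}.
\ee

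For the base case $s=1$ the geometric hypothesis does the decisive work. The Laurent polynomial $\psi_p(hf^{p-1})$ is supported in $(\supp(h)+(p-1)\Delta)/p \cap \Z^n \subseteq \Delta\cap\Z^n = V$, so I only need its values at vertices. For $\v b \in V$, the coefficient $[\v x^{p\v b}]hf^{p-1}$ counts tuples $(\v c_0,\dots,\v c_{p-1})$ with $\v c_0 \in \supp(h)\subseteq V$, $\v c_i \in V$ and $\sum_i \v c_i = p\v b$; since $\v b$ is an extreme point of $\Delta$, only the diagonal tuple $\v c_0 = \cdots = \v c_{p-1} = \v b$ qualifies, contributing $h_\v b f_\v b^{p-1} \equiv h_\v b \pmod p$ by Fermat. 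Hence $\psi_p(hf^{p-1}) \equiv h \pmod p$ and~\eqref{gauss-master} at $s=1$ collapses to $c_{p\v v} \equiv [\v x^\v v](h/f) = c_\v v \pmod p$, establishing Gauss at $\ord_p(\v v) = 0$.

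For $\ord_p(\v v) = t \geq 1$, writing $\v v = p^t\v w$ with $\ord_p(\v w) = 0$, my plan is to induct on $t$. Applying~\eqref{gauss-master} at $s = t+1$ and subtracting off the exact identity $c_{p^t\v w} = [\v x^{p^t\v w}](hf^{p^t-1}/f^{p^t})$ reduces the defect to $[\v x^{p^t\v w}](R_t/f^{p^t})$ modulo $p^{t+1}$, where $R_t := \psi_p(hf^{p^{t+1}-1}) - hf^{p^t-1}$. A second application of Dwork's lemma rewrites $R_t \equiv (\psi_p G - G)\,f^{p^t} \pmod{p^{t+1}}$; combined with the companion Dwork bound $\ord_p\bigl((f^{p^t})_\v c\bigr) \geq t - \ord_p(\v c)$, itself obtained by iterating Lemma~\ref{rasing-to-power-p-cong}, this lets me estimate the convolution term by term. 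Every index $\v u$ with $\ord_p(\v u) < t$ is handled: the inductive Gauss bound $p^{1+\ord_p(\v u)}\mid c_{p\v u}-c_\v u$ and the Dwork bound on $(f^{p^t})_{p^t\v w-\v u}$ contribute $p$-adic valuations summing to at least $t+1$.

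The hard part will be the diagonal index $\v u = p^t\v w$, where the only available bound on $c_{p\v u} - c_\v u = c_{p^{t+1}\v w} - c_{p^t\v w}$ is the very defect we are trying to estimate, so the elementary induction loops back on itself. Breaking out of the loop requires either sharpening Dwork's lemma modulo $p^{2s}$ to expose the next-order correction term $f^{p^s} - f(\v x^p)^{p^{s-1}}$ and showing that its contribution at the diagonal point supplies the missing factor of $p$, or --- more conceptually, and in line with the author's remark that the example proofs are postponed to later sections --- invoking the Cartier-operator framework of \S\ref{sec:cartier-section}, where $\Lambda_p$ is realized as a $p$-adic operator on a unit-root crystal and Gauss' congruences emerge as the coefficient-level statement of its convergence, bypassing the combinatorial self-reference altogether. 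The latter is the route taken in~\cite{DCI}.
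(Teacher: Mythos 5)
Your base-case analysis (mod $p$) is correct and is essentially the same vertex combinatorics the paper uses to compute the $1\times 1$ Hasse--Witt matrix of the open set $\mu$ around a vertex $\v u$: it equals $f_{\v u}^{p-1}\equiv 1 \bmod p$. However, the gap you flag at the diagonal index $\v u = p^t\v w$ is genuine: your induction on $t=\ord_p(\v v)$ needs to bound $c_{p^{t+1}\v w} - c_{p^t\v w}$ modulo $p^{t+1}$, and that is exactly the quantity the step is trying to establish. Nothing in the iterated-Dwork estimates you set up ($f^{p^s} \equiv f(\v x^p)^{p^{s-1}} \pmod{p^s}$ and the valuation bound $\ord_p\bigl((f^{p^t})_{\v c}\bigr) \ge t - \ord_p(\v c)$) can supply the extra power of $p$ there, because the second bound degenerates precisely when $\v c = \v 0$, i.e.\ on the diagonal. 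As written, the proposal proves the congruence only for $\ord_p(\v v)=0$.

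The paper breaks this circularity by a different mechanism: it does not estimate individual coefficients, but first proves a single exact structural relation $\cartier\omega_{\v u} - \lambda\,\omega_{\v u} \in p\,d\Omega_{\rm formal}$ for $\omega_{\v u}=\v x^{\v u}/f(\v x)$, $\v u$ a vertex. Existence and uniqueness of $\lambda\in\Z_p$ come from Theorem~\ref{DCI-main} applied to the open set $\mu$ with $\mu_\Z=\{\v u\}$; the proof of that theorem is the $p$-adic contraction argument of Proposition~\ref{key-contraction-lemma}, which, given that the Hasse--Witt matrix is a unit mod $p$, upgrades the mod-$p$ statement to the full $p$-adic splitting $\widehat\Omega_f(\mu)=\Omega_f^{(1)}(\mu)\oplus(d\Omega_{\rm formal}\cap\widehat\Omega_f(\mu))$ in one step. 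The scalar $\lambda$ is then pinned to $1$ by the constant term of the expansion at $\v u$ (a unit, $f_\v u^{-1}$) together with vertex-independence of $\cartier$, Theorem~\ref{cartier-props}(i). Once $\cartier\omega_{\v u}-\omega_{\v u}=p\eta$ with $\eta$ a formal derivative, Lemma~\ref{Katz-lemma}(i) gives $p^{\ord_p(\v v)}\mid c_{\v v}(\eta)$, so all higher-order Gauss congruences fall out at once, with no further induction. In short, the contraction lemma is precisely the device you correctly guess is needed: it produces the full $p$-adic identity by a fixed-point argument, whereas your coefficient-by-coefficient induction can only recover it order by order and self-references at the diagonal at each step.
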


\bigskip

We will prove this fact in \S\ref{sec:gauss-cong-proof}. These congruences are named after Gauss due to the basic one-variable example 
\[
f(x)=\frac1{1-ax}=\sum_{m=0}^\infty a^m x^m, \qquad a \in \Z.
\]
In this case for every prime $p \nmid a$ one has $a^m \is a^{m/p} \mod {p^{\ord_p(m)}}$. The reader may notice that this simple Gauss' congruence follows immediately from Lemma~\ref{rasing-to-power-p-cong}.

As our second example, let us expand at $\v 0$ the rational function
\[
\frac1{1-x_1-x_2} = \sum_{m \ge 0}(x_1+x_2)^m = \sum_{\v v \in \Z_{\ge 0}^2} \binom{v_1+v_2}{v_1} \v x^\v v.
\]
We conclude from Theorem~\ref{gauss-thm} that for any odd prime $p$ one has
\be{gauss-binomial-weak}
\binom{v_1+v_2}{v_1} \is \binom{(v_1+v_2)/p}{v_1/p} \mod {p^{\min(\ord_p{v_1},\ord_p{v_2})}}.
\ee
In fact this rational function is very special: though the modulus $p^{\ord_p(\v v)}$ in the Gauss congruence is sharp generically, in the case ~\eqref{gauss-binomial-weak} one actually has congruences modulo twice higher power of $p$, that is $p^{2 \min(\ord_p{v_1},\ord_p{v_2})}$. This case is an example of a \emph{supercongruence} which we will be able to explain only in~\S\ref{sec:exc-lifts}.

\section{$p$-adic Cartier operation on differential forms}\label{sec:cartier-section}

Let us recall the setup of \S\ref{sec:lifting-hasse-witt-matrices}. We work with a Laurent polynomial 
$f(\v x) = \sum_{\v u} f_{\v u} \v x^{\v u} \in R[x_1^{\pm 1},\ldots,x_n^{\pm 1}]$. The coefficients $f_{\v u}$ belong to a characteristic zero ring $R$. The Newton polytope of $f(\v x)$ is denoted by $\Delta \subset \R^n$. This polytope is the convex hull of $\supp(f)=\{ \v u \in \Z^n: f_{\v u} \ne 0 \}$. 

\subsection{Differential forms on the complement of the zero locus of $f(\v x)$}\label{sec:diff-forms}

Algebraic differential n-forms on the complement of the hypersurface $X_f = \{ \v x: f(\v x) = 0 \}$ in the torus $\mathbb{T}^n = \{ \v x :   x_i \ne 0 \; \forall i \}$ are of the shape $\omega = \frac{h(\v x)}{f(\v x)^m} \frac{d x_1}{x_1}\ldots \frac{d x_n}{x_n}$ with $m \ge 1$ and a Laurent polynomial $h \in R[x_1^{\pm 1},\ldots,x_n^{\pm 1}]$. We will now define submodules of our interest in the $R$-module of differential forms.

A subset $\mu \subseteq \Delta$ will be called \emph{open} if its complement $\Delta \setminus \mu$ is a union of faces of $\Delta$ of any dimensions. Below are examples of open subsets and their sets of integral points which we denote by $\mu_{\Z} = \mu \cap \Z^n$: 

\includegraphics[width=12cm]{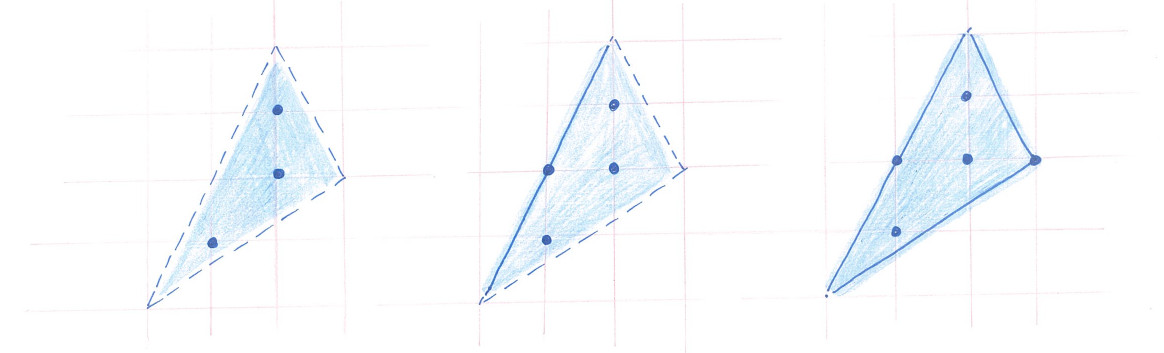} 

We define the $R$-modules
\[
\Omega_f(\mu)  = \left\{ \text{\begin{small}$(m-1)!$\end{small}}\dfrac{ h(\v x)}{f(\v x)^m}\;\;\Big|\;\;\bal  &m \ge 1, \quad h \in R[x_1^{\pm 1},\ldots,x_n^{\pm 1}]\quad\\
 & supp(h) \subset m \mu \eal \right\}. \\
\]
The module $\Omega_f(\Delta)$ is also denoted by $\Omega_f$. We shall also consider
\[ d \Omega_f =  R\text{-module generated by } x_i \frac{\partial \nu}{\partial x_i}, \; \nu \in \Omega_f, \; i=1,\ldots, n.
\]
One can easily check that derivations $x_i \frac{\partial }{\partial x_i}$ preserve modules $\Omega_f(\mu)$. In particular, it follows that $d \Omega_f = \sum_{i=1}^n x_i \frac{\partial }{\partial x_i}(\Omega_f)$ is a submodule in $\Omega_f$. 

\begin{remark}\label{geometric-remark} In~\cite{Batyrev} Batyrev gives the following identification when $R=\C$ and $f$ is \emph{$\Delta$-regular}:
\[\bal \Omega_f / d \Omega_f &\cong H_{dR}^n(\mathbb{T}^n \setminus X_f)\\
 \Omega_f \ni \dfrac{ h(\v x)}{f(\v x)^m} &\mapsto  \dfrac{ h(\v x)}{f(\v x)^m} \frac{d x_1}{x_1} \ldots \frac{d x_n}{x_n}  \\
 d \Omega_f & \leftrightarrow \text{ exact forms } 
 \eal\]
 De Rham cohomology of affine algebraic varieties over $\C$ has a mixed Hodge structure, which consists of two filtrations. Batyrev also proves that uder the above identification the filtration  
\[
\Omega_f^{(1)} \subset \Omega_f^{(2)} \subset \ldots  \quad\text{ with }\quad \Omega_f^{(k)} = \left\{ \frac{h(\v x)}{f(\v x)^m} \in \Omega_f \,|\, m \le k \right\}
\]
descends to the \emph{Hodge filtration}, and the \emph{weight filtration} on de Rham cohomology is the image of
\[
\Omega_f(\mu_n) \supset \Omega_f(\mu_{n-1}) \supset \ldots \supset \Omega_f(\mu_{1}),
\]
where $\mu_\ell$ is the complement of the union of all faces of codimension  $\ge \ell$.
\end{remark}

In what follows we will work with elements of $\Omega_f$, which are rational functions. However it is useful to remember that they correspond to differential forms.    

\subsection{The Cartier operation}\label{sec:cartier}
Let us now fix a prime $p>2$ and assume that $\cap_{s \ge 1} p^s R = \{ 0 \}$. Pick a vertex $\v b \in \Delta$ and consider the following operation on formal expansions with respect to $\v b$:
\[
\cartier: \frac{h(\v x)}{f(\v x)^m}  = \sum_{\v v }  c_\v v \v x^\v v  \mapsto \sum_{\v v } c_{p \v v} \v x^\v v .
\]
It is unlikely that the resulting series is again an expansion of a rational function. However it can be approximated by rational functions $p$-adically. Recall that a Frobenius lift $\sigma: R \to R$ is a ring endomorphism such that $\sigma(r) - r^p \in p R$ for all $r \in R$. For an endomorphism $\sigma$ and $f(\v x)= \sum_\v u f_\v u \v x^\v u$ we denote $f^\sigma(\v x) = \sum_\v u \sigma(f_\v u) \v x^\v u$.

\begin{lemma} Let $\sigma: R \to R$ be a Frobenius lift. For $\frac{h(\v x)}{f(\v x)^m} = \sum c_\v v \v x^\v v$,  the series $\sum c_{p\v v} \v x^\v v  $ can be approximated $p$-adically by rational functions with powers of $f^\sigma(\v x)$ in the denominator. More precisely, 
\[
\cartier (\Omega_f) \;\subset \; \widehat \Omega_{f^\sigma} = \text{ $p$-adic completion of } \Omega_{f^\sigma}.
\]
\end{lemma}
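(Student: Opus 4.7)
The strategy is to replace $f$ in the denominator by $f^\sigma$ at the cost of a $p$-adically small correction, and then apply $\cartier$ termwise using a clean formula for how $\cartier$ interacts with expressions of the form $P(\v x)/Q(\v x^p)^N$.

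First I would introduce the Laurent polynomial $r(\v x) \in R[x_1^{\pm 1},\ldots,x_n^{\pm 1}]$ defined by
\[
f(\v x)^p = f^\sigma(\v x^p) + p\, r(\v x).
\]
This makes sense as a genuine polynomial identity because $\sigma$ is a Frobenius lift, so $f(\v x)^p \is f^\sigma(\v x^p) \mod {p}$ in $R[x_1^{\pm 1},\ldots,x_n^{\pm 1}]$. Note that $\supp(r) \subset p\Delta$. For a generator $\nu = (m-1)!\, h(\v x)/f(\v x)^m \in \Omega_f$ with $\supp(h) \subset m\Delta$, I multiply top and bottom by $f(\v x)^{m(p-1)}$ and expand the resulting denominator as a $p$-adic binomial series:
\[
\nu = \sum_{k \ge 0} \binom{-m}{k}\,(m-1)!\, p^k\, \frac{h \cdot f^{m(p-1)} \cdot r^k}{f^\sigma(\v x^p)^{m+k}},
\]
valid in the formal Laurent series ring obtained by expanding at the chosen vertex $\v b$, since $pr/f^\sigma(\v x^p)$ has positive $p$-adic size.

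Next I would apply $\cartier$ termwise via the following identity, which I abbreviate by reusing the symbol $\cartier$ for the operation $\sum P_{\v u}\v x^{\v u} \mapsto \sum P_{p\v u} \v x^{\v u}$ on Laurent polynomials:
\[
\cartier\!\left(\frac{P(\v x)}{Q(\v x^p)^N}\right) \= \frac{\cartier(P)(\v x)}{Q(\v x)^N}.
\]
This follows from a direct convolution: the formal expansion $Q(\v x^p)^{-N} = \sum_{\v w} d_{\v w}\v x^{p\v w}$ is supported on $p\Z^n$, so the coefficient of $\v x^{p\v v}$ in $P(\v x)\,Q(\v x^p)^{-N}$ equals $\sum_{\v w} P_{p(\v v-\v w)} d_{\v w}$, which is precisely the $\v v$-coefficient of $\cartier(P)(\v x)\,Q(\v x)^{-N}$. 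The expansions are consistent because the cone $C(\Delta - \v b)$ is invariant under scaling. Using $(m-1)!\binom{-m}{k} = (-1)^k(m+k-1)!/k!$, applying this identity to each summand gives
\[
\cartier(\nu) \= \sum_{k \ge 0} \frac{(-p)^k}{k!}\cdot (m+k-1)!\, \frac{\cartier\!\left(h \cdot f^{m(p-1)} \cdot r^k\right)(\v x)}{f^\sigma(\v x)^{m+k}}.
\]

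Finally I would verify that each summand lies in $\Omega_{f^\sigma}$ and that the series converges $p$-adically. For the support: $h \cdot f^{m(p-1)} \cdot r^k$ has support in $m\Delta + m(p-1)\Delta + kp\Delta = p(m+k)\Delta$, hence $\cartier$ of it has support in $(m+k)\Delta$; and the Newton polytope of $f^\sigma$ equals $\Delta$. For convergence: $\ord_p(p^k/k!) \ge k(p-2)/(p-1)$ tends to infinity since $p > 2$, so the partial sums form a Cauchy sequence in $\Omega_{f^\sigma}$ and the limit belongs to $\widehat{\Omega_{f^\sigma}}$.

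The main technical hurdle is the identity $\cartier(P/Q(\v x^p)^N) = \cartier(P)/Q^N$, together with checking that it applies to our formal expansions at vertex $\v b$ in a consistent way (in particular that the intermediate series $Q(\v x^p)^{-N}$ is indeed supported on $p\Z^n$ when expanded at the scaled vertex $p\v b$). Everything else is a bookkeeping exercise: support arithmetic inside the polytope, the factorial identity $(m-1)!\binom{-m}{k} = (-1)^k(m+k-1)!/k!$, and the standard $p$-adic estimate on $p^k/k!$ that forces the assumption $p > 2$.
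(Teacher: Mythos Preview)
Your proof is correct and follows the same strategy as the paper: write $f(\v x)^p = f^\sigma(\v x^p) \pm p\,G(\v x)$, expand the denominator as a $p$-adic binomial series, and pull $\cartier$ through using $\cartier(P/Q(\v x^p)^N)=\cartier(P)/Q^N$. The one noteworthy difference is the exponent you raise $f$ to: you multiply up to $f^{pm}$, whereas the paper multiplies only to $f^{p\lceil m/p\rceil}$, the smallest $p$th multiple $\ge m$. Both choices prove the lemma, but the paper's sharper choice produces a formula whose $r=0$ term has denominator $f^\sigma(\v x)^{\lceil m/p\rceil}$ rather than $f^\sigma(\v x)^m$; that refinement is what drives the later contraction statement $\cartier(\Omega_f)\subset \Omega_{f^\sigma}^{(1)}+p\,\widehat\Omega_{f^\sigma}$, so it is worth internalising.
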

\begin{proof} We reproduce the proof of \cite[Prop. 3.3]{DCI}. It suffices to consider $h(\v x)=(m-1)! \v x^\v u$ with $\v u \in m\Delta$ and show that $\cartier((m-1)! \v x^\v u /f(\v x)^m) \in \hat\Omega_{f^\sigma}$.  We rewrite $1/f(\v x)^m$ as 
$f(\v x)^{p\lceil m/p\rceil-m}/f(\v x)^{p\lceil m/p\rceil}$.
Then note that
$f(\v x)^p=f^\sigma(\v x^p)-pG(\v x)$ for some Laurent polynomial $G$
with coefficients in $R$ and support in $p\Delta$. Then we use the $p$-adically convergent expansion
\[
\frac{\v x^{\v u}}{f(\v x)^{m}}=
\frac{\v x^{\v u}f(\v x)^{p\lceil m/p\rceil-m}}
{(f^\sigma(\v x^p)-pG(\v x))^{\lceil m/p\rceil}}
=\sum_{r\ge0}p^r{\lceil m/p\rceil+r-1\choose r}
\frac{G(\v x)^r}{f^\sigma(\v x^p)^{r+\lceil m/p\rceil}}
\v x^{\v u}f(\v x)^{p\lceil m/p\rceil-m},
\]
multiply it with $(m-1)!$ and apply $\cartier$. We find that
\be{cartier-formula}
\cartier\left((m-1)! \frac{\v x^\v u }{f(\v x)^m}\right)=\sum_{r\ge0}\frac{p^r}{r!}
\frac{(m-1)!}{(\lceil m/p\rceil-1)!}
(\lceil m/p\rceil+r-1)!\frac{Q_r(\v x)}{f^\sigma(\v x)^{r+\lceil m/p\rceil}},
\ee
where the $Q_r(\v x)=\cartier(G(\v x)^r\v x^{\v u}f(\v x)^{p\lceil m/p\rceil-m})$ are
Laurent polynomials in $x_1,\ldots,x_n$ with support in
$(\lceil m/p\rceil+r)\Delta$ and coefficients in $R$. Since the $p$-adic valuations of $p^r/r!$ grow infinitely, the right-hand sum is a $p$-adically convergent sum of elements of $\Omega_{f^\sigma}$, and hence it belongs to $\widehat\Omega_{f^\sigma}$.
\end{proof}

\if
 The last formula can be rewritten as
\begin{equation}\label{cartiermatrix}
\cartier(\omega_{\v u})=\sum_{\v v\in C(\Delta)_\Z^+}F_{\v u,\v v}
\omega^\sigma_{\v v},
\end{equation}
where 
\be{cartiermatrix-entries}
F_{\v u,\v v} = \bcs \frac{p^r}{r!} \frac{(m-1)!}{(\lceil m/p\rceil-1)!} \times \text{ coefficient of } \v x^{\v v} \text{ in } Q_r(\v x), & r:=v_0-\lceil u_0/p\rceil\ge0, \\
0 , & v_0<\lceil u_0/p\rceil. \ecs
\ee
To show that $\cartier(\omega_{\v u})\in\hat\Omega_{f^\sigma}$
it suffices to show that
$\ord_p(F_{\v u,\v v})\to\infty$ as $v_0\to\infty$. To that end we observe that
\[
\ord_p(F_{\v u,\v v})\ge r-\ord_p(r!)+\ord_p\left(\frac{(u_0-1)!}
{(\lceil u_0/p\rceil-1)!}\right).
\]
It is straightforward to see that $\frac{(u_0-1)!}{(\lceil u_0/p\rceil-1)!}$ has
order $\ge \lceil u_0/p\rceil-1$ and that $\ord_p(r!)<\frac{r}{p-1}$. This gives us
\be{cartier-entries-estimate}
\ord_p(F_{\v u,\v v})\ge r+\lceil u_0/p\rceil-1-\frac{r}{p-1}
=v_0-1-\frac{r}{p-1}\ge\frac{p-2}{p-1}(v_0-1).
\ee
The latter goes to $\infty$ with $v_0$ when $p>2$.
\fi

From now on it is convenient to {\bf assume that $R$ is $p$-adically complete}. The  $R$-linear operation
\
\be{cartier-def}
\cartier: \widehat\Omega_f \to \widehat\Omega_{f^\sigma}, \quad \sum  c_\v v \v x^\v v  \mapsto \sum  c_{p \v v} \v x^\v v 
\ee 
is called the \emph{$p$-adic Cartier operation}.  We shall now list its properties.

\begin{theorem}\label{cartier-props}
\begin{itemize}
\item[(i)] The operation $\cartier: \widehat\Omega_f \to \widehat\Omega_{f^\sigma}$ is independent of the choice of vertex $\v b \in \Delta$ with respect to which the formal expansion in~\eqref{cartier-def} is done.
\item[(ii)] For any open set $\mu \subseteq \Delta$ the image of $\widehat\Omega_f(\mu)$ lies in $\widehat\Omega_{f^\sigma}(\mu)$.  
\item[(iii)] For $1 \le i \le n$ we have
\[
\cartier \circ x_i \frac{\partial}{\partial x_i} = p \, x_i \frac{\partial}{\partial x_i} \circ \cartier.
\]
In particular, $\cartier(d \widehat \Omega_f) \subset d \widehat \Omega_{f^\sigma}$ and the Cartier operation descends to the homological quotients
\[
\cartier: \widehat \Omega_f / d \widehat \Omega_f \to \widehat \Omega_{f^\sigma} / d \widehat \Omega_{f^\sigma}. 
\]

\item[(iv)] If $R=\Z_p$ and $\sigma=id$ then
\[
(p^s-1)\; {\rm Trace}(\,\cartier^s \,|\, \hat \Omega_f\,) = \# ( \T^n \setminus X_f)  (\F_p^s)
\]
for every $s \ge 1$.
\end{itemize}
\end{theorem}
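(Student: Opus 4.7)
The plan is to dispatch (i)--(iii) by direct inspection of the explicit formula from the preceding lemma, and to settle (iv) via a Dwork--Serre trace formula for nuclear $p$-adic operators. I expect (iv) to be the main obstacle.

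For (i) it suffices to observe that formula~\eqref{cartier-formula} involves only the intrinsic decomposition $f(\v x)^p=f^\sigma(\v x^p)-pG(\v x)$ and the Cartier operation applied to a Laurent polynomial (extraction of $p$-divisible-exponent coefficients), both of which are manifestly vertex-free; so $\cartier$ is intrinsic on the generators $(m-1)!\,\v x^\v u/f^m$, and by $R$-linearity and $p$-adic continuity it is intrinsic on all of $\widehat\Omega_f$. For (ii) the geometric input is the inclusion $m\mu+k\Delta\subset(m+k)\mu$, valid for any open $\mu\subseteq\Delta$ and all $m\ge 1,\ k\ge 0$: if $\v v\in m\mu$ and $\v w\in k\Delta$ with $k\ge 1$, then $(\v v+\v w)/(m+k)=\tfrac{m}{m+k}\cdot\tfrac{\v v}{m}+\tfrac{k}{m+k}\cdot\tfrac{\v w}{k}$ is a convex combination with $\v v/m\in\mu$, and since proper faces of $\Delta$ are extremal, this combination cannot lie in any face missing from $\mu$. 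Applied to $\v x^\v u G(\v x)^r f(\v x)^{p\lceil m/p\rceil-m}$ with $\v u\in m\mu$, using $\supp G\subset p\Delta$ and $\supp f\subset\Delta$, this gives support in $p(r+\lceil m/p\rceil)\mu$; after $\cartier$, the support of $Q_r$ lies in $(r+\lceil m/p\rceil)\mu$, so every term of~\eqref{cartier-formula} is in $\Omega_{f^\sigma}(\mu)$, and the $p$-adic limit in $\widehat\Omega_{f^\sigma}(\mu)$. For (iii), on a formal expansion $\sum c_\v v\v x^\v v$ the derivation $x_i\partial_{x_i}$ acts by $\v x^\v v\mapsto v_i\v x^\v v$ and $\cartier$ by $\v x^{p\v v}\mapsto\v x^\v v$, so
\[
\cartier\circ x_i\partial_{x_i}\bigl(\textstyle\sum_\v v c_\v v\v x^\v v\bigr)=\sum_\v v c_{p\v v}(pv_i)\v x^\v v=p\cdot x_i\partial_{x_i}\circ\cartier\bigl(\textstyle\sum_\v v c_\v v\v x^\v v\bigr);
\]
the inclusion $\cartier(d\widehat\Omega_f)\subset d\widehat\Omega_{f^\sigma}$ and the descent to the homological quotient are then formal consequences.

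For (iv), I would apply Serre's trace formalism for completely continuous $p$-adic operators. The $p$-adic estimates underpinning the preceding lemma show that $\cartier^s$ is nuclear on $\widehat\Omega_f$, so its Serre trace is well-defined as the formal sum of diagonal matrix entries in a Schauder-type basis of the form $\{(m-1)!\,\v x^\v u/f^m\}$ indexed by admissible pairs $(m,\v u)$ with $\v u\in m\Delta_\Z$. On the arithmetic side, one writes
\[
\#(\T^n\setminus X_f)(\F_{p^s})=\sum_{\v x\in(\F_{p^s}^\times)^n}f(\v x)^{p^s-1}
\]
in $\F_{p^s}$, using that $a^{p^s-1}$ equals $1$ or $0$ according as $a\neq 0$ or $a=0$. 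Lifting each $\v x$ to its Teichm\"uller representative and using the multiplicative orthogonality $\sum_{x\in\F_{p^s}^\times}\tau(x)^k=(p^s-1)\,\mathbf{1}[(p^s-1)\mid k]$, the right-hand side rewrites as a finite sum of coefficients of $\v x^{(p^s-1)\v v}$ in $f(\v x)^{p^s-1}$ weighted by a power of $p^s-1$. The crux, and the expected main difficulty, is to match this combinatorial expression term-by-term with the Serre trace of $\cartier^s$ via formula~\eqref{cartier-formula}, with the single factor $p^s-1$ on the left-hand side of the statement emerging from the multiplicative character orthogonality in the torus sum.
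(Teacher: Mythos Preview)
Your treatment of (i)--(iii) is essentially the same as the paper's sketch: (i) from the intrinsic formula~\eqref{cartier-formula}, (ii) from the combinatorics of open $\mu$ (you spell out the convexity/extremality argument more explicitly than the paper does), and (iii) by direct inspection on formal expansions. Nothing to add there.

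Part~(iv), however, has two genuine gaps. First, the generating system $\{(m-1)!\,\v x^{\v u}/f^m : \v u\in m\Delta_\Z,\ m\ge 1\}$ is \emph{not} a basis of $\widehat\Omega_f$: there are obvious relations such as $\v x^{\v u}/f^{m}=\sum_{\v w}f_{\v w}\,\v x^{\v u+\v w}/f^{m+1}$, so the ``diagonal sum'' you propose is not a well-defined trace. The paper confronts exactly this point and resolves it by passing to a free resolution of $\Omega_f$ on which the Cartier action lifts; that free module is identified with Dwork's \emph{exponential module}, and the trace there is computed by the Dwork trace formula. Second, your counting identity $\#(\T^n\setminus X_f)(\F_{p^s})=\sum_{\v x}f(\v x)^{p^s-1}$ is only an equality in $\F_{p^s}$, i.e.\ modulo $p$. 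Lifting coordinates via Teichm\"uller does not repair this: the Teichm\"uller map is multiplicative but not additive, so $\tau(f(\overline{\v x}))\neq f(\tau(\overline{\v x}))$ in general, and $f(\tau(\overline{\v x}))^{p^s-1}$ is not $\{0,1\}$-valued in $\Z_p$. Thus the character-orthogonality manipulation you describe produces $(p^s-1)^n$ times a coefficient sum that is not the point count. Dwork's route bypasses this obstruction by using an \emph{additive} character with a $p$-adic splitting function (the Dwork exponential), which is precisely what underlies the exponential module and the trace formula the paper invokes.
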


\begin{proof}[Sketch of proof]
Part~(i) follows from formula~\eqref{cartier-formula} which actually describes the Cartier map without appealing to the formal expansion procedure. Part~(ii) can be also deduced from this formula, because our definition of finite topology in \S\ref{sec:diff-forms} implies that when $\v u in m \mu$ then $\supp (\Q_r) \subset (\lceil m/p \rceil + r)\mu$. The reader can find a detailed explanation of this in~\cite[Prop 3.4]{DCI}. Part~(iii) can be easily checked on formal expansions. 

Part~(iv) is proved in~\cite[Thm A.1]{DCI}. There are obvious relations among the generators $(m-1)!\frac{\v x^\v u}{f(\v x)^m}$ of the module $\Omega_f$. We consider its resolution in which such generators become free and lift the Cartier action to such free modules in~\cite[Thm A.5]{DCI}. These free modules with the Cartier action turn out to be the \emph{exponential modules} introduced by Dwork, and the desired computation of the trace    on them is precisely the \emph{Dwork trace formula} (\cite[Prop A.8]{DCI}). We remark that the traces here are well defined because modulo any power of $p$ the image of $\cartier$ is of finite rank, and the same property holds true for the lift of this operation to the exponential module.   
\end{proof}

\subsection{Formally exact forms}\label{sec:formally-exact} In \S\ref{sec:cartier} we defined the Cartier operation thorough its action on formal expansions with respect to a vertex $\v b \in \Delta$. Recall that the procedure of formal expansion was defined in \S\ref{sec:formal-exp}. If the coefficient in $f(\v x)$ at $\v x^\v b$ is in $R^\times$, this operation embeds $\Omega_f$ into the $R$-module 
\[
\Omega_{\rm formal} = \{ \sum_{\v u \in C(\Delta - \v b)\cap \Z^n} c_\v u \v x^\v u \;|\; c_\v u \in R \}
\]
of formal Laurent series supported in the  cone $C(\Delta-\v b)$. We define the submodule of \emph{formal derivatives} as
\[
d \Omega_{\rm formal} = \sum_{i=1}^n  x_i \frac{\partial}{\partial x_i} (\Omega_{\rm formal}).
\]

\begin{lemma}\label{Katz-lemma} A series $\nu=\sum_{\v u} c_\v u \v x^\v u \in \Omega_{\rm formal}$ is a \emph{formal derivative} if and only if one of the following equivalent conditions holds
\begin{itemize}
\item[(i)] $c_\v u \in g.c.d.(u_1,\ldots,u_n) R$ for each $\v u$,
\item[(ii)] $\cartier^s(\nu) \in p^s \Omega_{\rm formal}$ for each $s \ge 1$.
\end{itemize}
\end{lemma}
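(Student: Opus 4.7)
My plan is to prove the cyclic chain (i) $\Rightarrow$ ($\nu \in d\Omega_{\rm formal}$) $\Rightarrow$ (ii) $\Rightarrow$ (i), each step being short once set up correctly.

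For (i) $\Rightarrow$ ($\nu \in d\Omega_{\rm formal}$), I apply Bezout's identity in $\Z$. For each $\v u \ne \v 0$ there exist integers $a^{(i)}_\v u$ with $\sum_i a^{(i)}_\v u u_i = \gcd(u_1,\ldots,u_n)$. By (i), write $c_\v u = \gcd(\v u)\, r_\v u$ with $r_\v u \in R$ and set $b^{(i)}_\v u := a^{(i)}_\v u r_\v u \in R$; the case $\v u = \v 0$ is automatic because (i) forces $c_\v 0 = 0$, and I set $b^{(i)}_\v 0 = 0$. Define $\eta_i := \sum_\v u b^{(i)}_\v u \v x^\v u$; each $\eta_i$ lies in $\Omega_{\rm formal}$ since $\supp(\eta_i) \subseteq \supp(\nu) \subseteq C(\Delta - \v b) \cap \Z^n$. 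A direct computation then yields $\sum_{i=1}^n x_i \partial_{x_i} \eta_i = \sum_\v u \bigl(\sum_i u_i a^{(i)}_\v u\bigr) r_\v u \v x^\v u = \sum_\v u c_\v u \v x^\v u = \nu$.

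For ($\nu \in d\Omega_{\rm formal}$) $\Rightarrow$ (ii), the formal Cartier operation on $\Omega_{\rm formal}$ satisfies the same commutation relation as in Theorem~\ref{cartier-props}(iii), namely $\cartier \circ x_i \partial_{x_i} = p \cdot x_i \partial_{x_i} \circ \cartier$. This is immediate on monomials: applied to a coefficient $b_{p\v v}$ at level $\v v$, both sides produce $p v_i b_{p\v v}$. Therefore $\cartier(d \Omega_{\rm formal}) \subseteq p \cdot d\Omega_{\rm formal}$, and by induction $\cartier^s(\nu) \in p^s \cdot d\Omega_{\rm formal} \subseteq p^s \Omega_{\rm formal}$ for every $s \ge 1$. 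For (ii) $\Rightarrow$ (i), I exploit that $R$, being $p$-adically complete and of characteristic zero, is automatically a $\Z_p$-algebra: for any integer $n$ coprime to $p$, the integer inverses $n^{-1}\bmod p^s$ form a $p$-adic Cauchy sequence in $R$ whose limit inverts $n$. Consequently $\gcd(u_1,\ldots,u_n) R = p^{s(\v u)} R$ with $s(\v u) := \min_i \ord_p(u_i)$, so (i) reduces to the assertion $c_\v v \in p^{s(\v v)} R$ for all $\v v$. Given $\v v \ne \v 0$, write $\v v = p^{s(\v v)} \v u$ with $\v u \in \Z^n$ and apply (ii) with $s = s(\v v)$ to get $c_\v v = c_{p^{s(\v v)} \v u} \in p^{s(\v v)} R$. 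For $\v v = \v 0$, condition (ii) for all $s$ gives $c_\v 0 \in \bigcap_s p^s R = \{0\}$.

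The main subtlety I anticipate is the third step: one must notice that $p$-adic completeness alone suffices to make every integer coprime to $p$ invertible in $R$, collapsing gcd ideals to pure powers of $p$. Without this observation (i) and (ii) are genuinely inequivalent — for instance in $R = \Z$ the monomial $x_1^6$ satisfies (ii) at $p=5$ but not (i). Once the $\Z_p$-algebra structure is made explicit, the proof reduces to Bezout in one direction and the commutation of $\cartier$ with the Euler derivations in the other.
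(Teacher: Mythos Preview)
Your proof is correct. The paper leaves this lemma as an exercise, so there is no proof to compare against; your cyclic chain (i) $\Rightarrow$ ($\nu \in d\Omega_{\rm formal}$) $\Rightarrow$ (ii) $\Rightarrow$ (i) is the natural argument, and your identification of the key subtlety---that $p$-adic completeness makes $R$ a $\Z_p$-algebra, collapsing $\gcd(\v u)R$ to $p^{\min_i \ord_p(u_i)}R$---is exactly the point the paper flags in the remark immediately following the lemma.
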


Here criterion~(i) holds in general over rings of characteristic~0, while for~(ii) to be true one needs $R$ to be a $\Z_p$-algebra, so that all other primes are invertible in $R$. Recall that we assume $R$ to be $p$-adically complete, and hence it is a $\Z_p$-algebra.

\begin{proof}
We leave this proof as an exercise.
\end{proof}

For a subset $\mu \subseteq \Delta$ which is \emph{open} in the finite topology of \S\ref{sec:diff-forms}, we define the sequence of square matrices with entries in $R$ indexed by the set $\mu_\Z = \mu \cap \Z^n$ as 
\[
\beta_m(\mu)_{\v u, \v v \in \mu_\Z} = \text{ coefficient of } \v x^{m \v v - \v u} \text { in } f(\v x)^{m-1}.
\]
The matrix $\beta_p(\mu)$ will be called the Hasse-Witt matrix corresponding to $\mu$ and denoted by 
\[
HW(\mu)=\beta_p(\mu).
\]
We shall now state the key result of our paper \emph{Dwork crystals I}. We say that \emph{the Hasse--Witt condition holds for $\mu$} if $\det(HW(\mu))$ is invertible modulo $p$. Under our assumption that $R$ is $p$-adically complete this is equivalent to being invertible in $R$, see Remark~\ref{p-adic-invertibility-rmk}.

\begin{theorem}\label{DCI-main}(\cite[Thm 4.3]{DCI}) Assume that $\cap_{s \ge 1} p^s R = \{0\}$ and $R$ is $p$-adically complete. If the Hasse--Witt condition holds for an \emph{open} set $\mu \subseteq \Delta$, then we have the following direct sum decomposition of $R$-modules 
\[
\Omega_f(\mu) = \Omega_f^{(1)}(\mu) \oplus \left(d \Omega_{\rm formal} \cap \Omega_f(\mu) \right),
\]
where 
\[
\Omega_f^{(1)}(\mu) = \sum_{\v u \in \mu_\Z} R \, \frac{\v x^\v u}{f(\v x)}
\]
is a free module of rank $\# \mu_\Z$. Moreover, we have
\[
\cartier (\Omega_f(\mu)) \subset \Omega_{f^\sigma}^{(1)}(\mu) + p \, d \Omega_{\rm formal}.
\]
\end{theorem}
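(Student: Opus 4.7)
My strategy is to let the $p$-adic Cartier operator $\cartier$ and Lemma~\ref{Katz-lemma}(ii) detect membership in $d\Omega_{\rm formal}$, and to use the invertibility of $HW(\mu)$ as the linear algebra that powers a $p$-adic successive approximation. The entire argument rests on the following consequence of the Cartier formula~\eqref{cartier-formula} with $m=1$: for $\v u\in\mu_\Z$,
\[
\cartier\!\left(\frac{\v x^\v u}{f(\v x)}\right) \;=\; \frac{Q_0(\v x)}{f^\sigma(\v x)} \;+\; p\,\eta_{\v u}, \qquad \eta_{\v u}\in\widehat\Omega_{f^\sigma}(\mu),
\]
with $Q_0=\cartier(\v x^\v u f^{p-1})$. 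Openness of $\mu$ yields $\v u+(p-1)\Delta\subset p\mu$ by a convex-combination argument (any combination with positive weight at $\v u\in\mu$ avoids every face of $\Delta$ disjoint from $\v u$), so $Q_0$ is supported in $\mu_\Z$ with coefficient at $\v x^\v w$ equal to $HW(\mu)_{\v u,\v w}$; the $p\,\eta_{\v u}$ remainder appears because $v_p(p^r/r!)\ge1$ for $r\ge1$ and $p>2$. Consequently, reduced modulo $p$, the Cartier operator sends $\Omega_f^{(1)}(\mu)$ to $\Omega_{f^\sigma}^{(1)}(\mu)$ via $HW(\mu)^T$, which is an isomorphism by the Hasse--Witt hypothesis.

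Freeness of $\Omega_f^{(1)}(\mu)$ is automatic: a relation $\sum r_\v u\v x^\v u/f=0$ among rational functions forces $\sum r_\v u\v x^\v u=0$ in $R[\v x^{\pm}]$, hence each $r_\v u=0$. For directness of the sum, suppose $\omega^{(1)}=\sum r_\v u\v x^\v u/f$ lies in $\Omega_f^{(1)}(\mu)\cap d\Omega_{\rm formal}$. Lemma~\ref{Katz-lemma}(ii) yields $\cartier^s(\omega^{(1)})\in p^s\Omega_{\rm formal}$ for every $s\ge1$; iterating the displayed formula gives $\cartier^s(\omega^{(1)})\equiv (H_s)^T\v r\cdot(\text{leading Laurent terms})\pmod p$, where $H_s=HW(\mu)\cdot\sigma(HW(\mu))\cdots\sigma^{s-1}(HW(\mu))$. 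A bootstrap on $s$ --- at each step, divide by $p^s$ and extract the Laurent coefficient at $\v x^{\v w-\v b}$ for a vertex $\v b$ of $\Delta$, then invert $H_{s+1}^T$ modulo $p$ --- delivers $\v r\in p^s R^{\mu_\Z}$ for every $s$, whence $\v r=0$ by $\bigcap_s p^s R=\{0\}$.

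For existence of the decomposition, given $\omega\in\Omega_f(\mu)$ I build $\v r\in R^{\mu_\Z}$ by successive approximation so that $\omega-\omega^{(1)}$ satisfies $\cartier^s(\omega-\omega^{(1)})\in p^s\Omega_{\rm formal}$ for every $s\ge 1$ (equivalent to $\omega-\omega^{(1)}\in d\Omega_{\rm formal}$ via Lemma~\ref{Katz-lemma}(ii)). Having chosen $\v r^{(s)}$ meeting the condition up to level $s$, the required increment $\v r^{(s+1)}-\v r^{(s)}\in p^s R^{\mu_\Z}$ satisfies a linear congruence whose mod-$p$ coefficient matrix, after dividing by $p^s$, is a $\sigma^s$-twist of $HW(\mu)^T$; it is solvable by the Hasse--Witt hypothesis. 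The principal obstacle of the argument is verifying that this correction equation really is governed purely by $HW(\mu)^T$ modulo error already at the right $p$-adic level --- precisely what the displayed formula, together with the $p$-divisibility estimate on higher Cartier-formula terms, makes possible. The sequence $\{\v r^{(s)}\}$ is Cauchy in the $p$-adic topology, and by the completeness of $R$ it converges to a limit $\v r$, giving $\omega-\omega^{(1)}\in d\Omega_{\rm formal}\cap\Omega_f(\mu)$.

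Finally, for the Cartier inclusion, decompose $\omega=\omega^{(1)}+\nu$ using the just-proven decomposition. By Theorem~\ref{cartier-props}(iii) one has $\cartier(\nu)\in p\,d\Omega_{\rm formal}$. The displayed formula places $\cartier(\omega^{(1)})$ in $\Omega_{f^\sigma}^{(1)}(\mu)+p\,\widehat\Omega_{f^\sigma}(\mu)$; applying the decomposition now to the Laurent polynomial $f^\sigma$ (whose Newton polytope is $\Delta$ and whose Hasse--Witt matrix $\sigma(HW(\mu))$ is still invertible mod $p$) splits the $p$-tail as $p\,\Omega_{f^\sigma}^{(1)}(\mu)+p\,d\Omega_{\rm formal}$, the first summand being absorbed into the principal one. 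This yields $\cartier(\omega)\in\Omega_{f^\sigma}^{(1)}(\mu)+p\,d\Omega_{\rm formal}$, completing the proof.
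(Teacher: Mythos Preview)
Your approach is essentially the paper's: both rest on the contraction $\cartier(\Omega_f(\mu))\subset\Omega^{(1)}_{f^\sigma}(\mu)+p\,\widehat\Omega_{f^\sigma}(\mu)$, the invertibility of $HW(\mu)$ modulo $p$, and Lemma~\ref{Katz-lemma}(ii); the paper simply packages your successive-approximation scheme into the abstract contraction lemma Proposition~\ref{key-contraction-lemma} (applied with $M_i=\widehat\Omega_{f^{\sigma^i}}(\mu)$, $N_i=\Omega^{(1)}_{f^{\sigma^i}}(\mu)$, $\phi_i=\cartier$), which you inline by hand.

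One point to tighten: your displayed Cartier identity treats only pole order $m=1$, but in the existence step the obstruction $p^{-s}\cartier^s(\omega-\omega^{(1,s)})$ is a general element of $\widehat\Omega_{f^{\sigma^s}}(\mu)$ of arbitrary pole order, so you need the full statement of Proposition~\ref{Cartier-mod-p}, which also uses that $(m-1)!/(\lceil m/p\rceil-1)!\in p\Z$ for $m>p$. (Your phrase ``$p$-divisibility estimate on higher Cartier-formula terms'' gestures at this, but it is the crux.) A minor slip: in the existence step the linear congruence for the increment is governed by the product $H_{s+1}=HW(\mu)\cdot\sigma(HW(\mu))\cdots\sigma^{s}(HW(\mu))$, just as in your directness argument, not by a single $\sigma^s$-twist; either way it is invertible mod $p$ and the argument goes through.
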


It is clear from Lemma~\ref{Katz-lemma} that the Cartier operation preserves $d\Omega_{\rm formal}$ (and even maps it to $p \, d\Omega_{\rm formal}$). Hence, under the assumptions of Theorem~\ref{DCI-main}, one can define the matrix of the Cartier operation on the quotient by formal derivatives $\Omega_f(\mu) / ( d \Omega_{\rm formal} \cap \Omega_f(\mu))$. We call this quotient the \emph{unit-root crystal}. There is a unique matrix $\Lambda(\mu) = (\lambda_{\v u, \v v})_{\v u, \v v \in \mu_\Z}$ with entries in $R$   
such that
\be{cartier-matrix-def-1}
\cartier\left(\frac{\v x^\v u}{f(\v x)}\right) = \sum_{\v v \in \mu_\Z} \lambda_{\v u, \v v} \frac{\v x^\v v}{f^\sigma(\v x)} \quad \mod {p \, d \Omega_{\rm formal}}
\ee
for every $\v u \in \mu_\Z$. In Section~\ref{sec:periods} we are going to focuse on the computation of $\Lambda(\mu)$. We remark that this matrix depends on the chosen Frobenius lift $\sigma$.

The proof of Theorem~\ref{DCI-main} will be given in \S\ref{sec:contraction}. It will exploit the $p$-adic contraction property of the Cartier operator stated in the following proposition.  

\begin{proposition}\label{Cartier-mod-p} One has 
\[
\cartier (\Omega_f(\mu)) \subset \Omega_{f^\sigma}^{(1)}(\mu) + p \, \widehat\Omega_{f^\sigma}(\mu).
\]
 For any $\v u \in \mu_\Z$ we have
\[
\cartier\left(\frac{\v x^\v u}{f(\v x)}\right) = \sum_{\v v \in \mu_\Z} HW_{\v u,\v v} \frac{\v x^\v v}{f^\sigma(\v x)} \quad \mod {p \widehat\Omega_{f^\sigma}(\mu)}. 
\]
\end{proposition}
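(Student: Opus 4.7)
My plan is to leverage the explicit formula~\eqref{cartier-formula} derived in the proof of the preceding lemma, which expresses
\[
\cartier\left((m-1)!\frac{\v x^\v u}{f(\v x)^m}\right) = \sum_{r \ge 0} \frac{p^r}{r!}\,\frac{(m-1)!}{(\lceil m/p\rceil - 1)!}\,(\lceil m/p\rceil + r - 1)!\,\frac{Q_r(\v x)}{f^\sigma(\v x)^{r + \lceil m/p\rceil}},
\]
with $Q_r = \cartier(G^r \v x^\v u f^{p\lceil m/p\rceil - m})$. By $R$-linearity and the definition of $\Omega_f(\mu)$ it suffices to prove both claims for the generators $(m-1)!\v x^\v u/f^m$ with $\v u \in m\mu \cap \Z^n$.

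For the first inclusion I would split the right-hand side into the $r = 0$ summand and the tail. The $p$-adic estimates $\ord_p(p^r/r!) \ge 1$ for every $r \ge 1$ (valid because $p \ge 3$) and $\ord_p\bigl((m-1)!/(\lceil m/p\rceil - 1)!\bigr) \ge \lceil m/p\rceil - 1$ together push every $r \ge 1$ summand into $p\widehat\Omega_{f^\sigma}$. The $r = 0$ contribution equals $(m-1)!\,Q_0/f^\sigma(\v x)^{\lceil m/p\rceil}$: if $m \le p$ it has pole order one and sits in $\Omega_{f^\sigma}^{(1)}$, whereas if $m > p$ the factor $(m-1)!$ alone supplies the missing $p$-divisibility. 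Invoking part~(ii) of Theorem~\ref{cartier-props}, which refines all of the above to $\supp(Q_r) \subset (\lceil m/p\rceil + r)\mu$ whenever $\v u \in m\mu$, upgrades each containment to lie in the $\mu$-restricted modules $\Omega_{f^\sigma}^{(1)}(\mu)$ and $p\widehat\Omega_{f^\sigma}(\mu)$.

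For the explicit formula I would specialize the expansion to $m = 1$, in which case it collapses to $\cartier(\v x^\v u/f) = \sum_{r \ge 0} p^r Q_r/(f^\sigma)^{r+1}$. Reducing modulo $p\widehat\Omega_{f^\sigma}(\mu)$ kills every $r \ge 1$ term and leaves $Q_0/f^\sigma$ with $Q_0 = \cartier(\v x^\v u f^{p-1})$. Unfolding the Cartier operation on the monomial expansion of $\v x^\v u f^{p-1}$ shows that the coefficient of $\v x^\v v$ in $Q_0$ equals the coefficient of $\v x^{p\v v - \v u}$ in $f(\v x)^{p-1}$, which is exactly $HW_{\v u, \v v}$ by definition of the Hasse--Witt matrix. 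The support statement above confines the summation to $\v v \in \mu_\Z$, and division by $f^\sigma$ yields the claimed congruence.

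The hard part of the whole argument is the support statement $\supp(Q_r) \subset (\lceil m/p\rceil + r)\mu$ on which both claims rest. Because $G^r$ is supported in $rp\Delta$ with no a priori $\mu$-restriction, one must show that the division-by-$p$ inherent in $\cartier$ forces every output exponent back into $\mu$. The key convex-geometric input is the extremality of the faces comprising $\Delta \setminus \mu$: a proper convex combination of points of $\Delta$ that lands inside such a face must already have all its constituents on that face, which, together with the weight $\v u/p$ inherited from the factor $\v x^\v u$ and the hypothesis $\v u/m \in \mu$, rules out any face of $\Delta \setminus \mu$ as a destination. Once this geometric lemma is granted, the remaining $p$-adic bookkeeping and Hasse--Witt identification are essentially routine.
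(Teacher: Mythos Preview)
Your proposal is correct and follows essentially the same approach as the paper's proof: both analyze formula~\eqref{cartier-formula}, observe that $p^r/r!$ is divisible by $p$ for $r\ge1$ and $(m-1)!/(\lceil m/p\rceil-1)!$ is divisible by $p$ for $m>p$, isolate the $r=0$, $m\le p$ term as the $\Omega_{f^\sigma}^{(1)}$-part, and then specialize to $m=1$ to identify $Q_0=\cartier(\v x^\v u f^{p-1})$ with the Hasse--Witt entries, invoking Theorem~\ref{cartier-props}(ii) for the $\mu$-support. Your final paragraph sketching the convex-geometric reason behind $\supp(Q_r)\subset(\lceil m/p\rceil+r)\mu$ is a welcome expansion of what the paper defers to~\cite[Prop~3.4]{DCI}, though note that this statement is \emph{quoted} rather than \emph{proved} in the paper's argument for the present proposition, so strictly speaking the paper treats it as input rather than as ``the hard part'' here.
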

\begin{proof} We analyze formula~\eqref{cartier-formula}. $p^r/r!$ is divisible by $p$ for all $r>0$ and $(m-1)!/(\lceil m/p \rceil-1)!$ is divisible by $p$ for $m > p$. Terms with $r=0$ and $m \le p$ have in their denominators $f^\sigma(\v x)$ in the power $r + \lceil m/p \rceil=1$, which proves our first claim. For the second claim we take $r=0,m=1$ and obtain that $Q_0(\v x)=\cartier\left( \v x^\v u f(\v x)^{p-1}\right)$, which proves the claimed formula for the Cartier action modulo $p$ with summation over $\v v \in \Delta_\Z$. The fact that $HW_{\v u , \v v} = 0 \mod p$ when $\v v \notin \mu$ follows from~(ii) in Theorem~\ref{cartier-props}.  
\end{proof}

Note that this proposition shows that under the conditions of Theorem~\ref{DCI-main} one has 
\[
\Lambda(\mu) \is HW(\mu) \mod p,
\]
where $\Lambda(\mu)$ is the matrix of Cartier action modulo formal derivatives defined in~\eqref{cartier-matrix-def-1}.

\begin{corollary}\label{eigenvalues-of-cartier-cor} Suppose that $R=\Z_p$ and the Hasse--Witt condition holds for the whole Newton polytope $\Delta$. Then the eigenvalues of the Cartier matrix $\Lambda=\Lambda(\Delta)$ defined by the condition~\eqref{cartier-matrix-def-1} with $\mu=\Delta$ are  Frobenius roots of the toric hypersurface $X_f = \{ \v x \in \T^n | f(\v x) = 0\}$ over the field $\F_p$. Moreover, they are all Frobenius roots of $p$-adic valuation less than~1. 
\end{corollary}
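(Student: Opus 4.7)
The strategy leverages the block decomposition of $\cartier$ supplied by Theorem~\ref{DCI-main} together with the trace formula of Theorem~\ref{cartier-props}(iv). Under the Hasse--Witt condition for $\mu = \Delta$, the former provides
\[
\hat\Omega_f \;=\; \hat\Omega_f^{(1)}(\Delta) \;\oplus\; V, \qquad V \;:=\; \widehat{d\Omega_{\rm formal} \cap \Omega_f},
\]
together with the inclusion $\cartier(\hat\Omega_f) \subset \hat\Omega_f^{(1)}(\Delta) + p\,d\Omega_{\rm formal}$; Lemma~\ref{Katz-lemma}(ii) further gives $\cartier(V) \subset pV$. Since $R = \Z_p$ forces $\sigma = \mathrm{id}$ and $f^\sigma = f$, $\cartier$ is an endomorphism of $\hat\Omega_f$, and with respect to this decomposition it takes the block form
\[
\cartier \;=\; \begin{pmatrix} \Lambda & 0 \\ * & pA \end{pmatrix},
\]
where $\Lambda = \Lambda(\Delta)$ is the matrix from~\eqref{cartier-matrix-def-1}. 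The remark after Proposition~\ref{Cartier-mod-p} gives $\Lambda \equiv HW(\Delta) \pmod p$, and by the Hasse--Witt hypothesis $\det HW(\Delta)$ is a unit, so every eigenvalue of $\Lambda$ is a $p$-adic unit.

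Next I feed this into the trace formula. The identity $(p^s-1)\lambda^s = (p\lambda)^s - \lambda^s$ combined with Theorem~\ref{cartier-props}(iv) yields
\[
\#(\T^n \setminus X_f)(\F_{p^s}) \;=\; \Tr\bigl((p\cartier)^s \mid \hat\Omega_f\bigr) \;-\; \Tr\bigl(\cartier^s \mid \hat\Omega_f\bigr),
\]
and exponentiating in $T$ gives the $p$-adic Fredholm identity
\[
\mathcal{Z}_{\T^n \setminus X_f}(T) \;=\; \frac{\det(I - T\cartier \mid \hat\Omega_f)}{\det(I - pT\cartier \mid \hat\Omega_f)} \;=\; \frac{\det(I - T\Lambda)}{\det(I - pT\Lambda)} \cdot \frac{\det(I - pTA)}{\det(I - p^2 TA)},
\]
the final step by block triangularity. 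Every reciprocal root appearing on the right other than those of $\det(I - T\Lambda)$ has positive $p$-adic valuation, so the valuation-$0$ reciprocal roots of the numerator of $\mathcal{Z}_{\T^n \setminus X_f}(T)$ are precisely the eigenvalues of $\Lambda$, while its denominator has none.

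To conclude, use $\mathcal{Z}_{X_f}(T) = \mathcal{Z}_{\T^n}(T)/\mathcal{Z}_{\T^n \setminus X_f}(T)$. Since $\mathcal{Z}_{\T^n}(T) = \prod_{k=0}^n (1 - p^k T)^{(-1)^{n-k+1}\binom{n}{k}}$ has only $1$ as a reciprocal root of $p$-adic valuation $0$ (simple), after division the valuation-$0$ reciprocal roots of $\mathcal{Z}_{X_f}(T)$ agree, up to a single possible copy of $1$, with the eigenvalues of $\Lambda$. Hence each eigenvalue of $\Lambda$ is a Frobenius root of $X_f$ of $p$-adic valuation $0 < 1$, which is the desired statement.

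The main obstacle I anticipate is the rigorous handling of the Fredholm determinants on the infinite-dimensional module $\hat\Omega_f$: one needs complete continuity of $\cartier$ so that $\det(I - T\cartier \mid \hat\Omega_f)$ is a well-defined entire $p$-adic function factoring along the block decomposition, and one needs the splitting of Theorem~\ref{DCI-main} together with Lemma~\ref{Katz-lemma}(ii) to pass cleanly to the $p$-adic completion. Both points are handled in~\cite{DCI} by lifting $\cartier$ to Dwork's exponential modules, which is also the machinery powering the proof of Theorem~\ref{cartier-props}(iv).
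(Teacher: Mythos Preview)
Your argument is correct and rests on the same two ingredients as the paper's proof: the trace formula of Theorem~\ref{cartier-props}(iv) and the block-triangular shape of $\cartier$ with respect to the splitting of Theorem~\ref{DCI-main} (equivalently, divisibility of $\cartier$ by $p$ on formal derivatives). The difference is purely in the packaging. The paper never forms Fredholm determinants; it stays with traces and works modulo $p^s$. From the block form one has $\Tr(\cartier^s\mid\hat\Omega_f)\equiv\Tr(\Lambda^s)\pmod{p^s}$, and reducing the trace formula modulo $p^s$ (using $(p^s-1)\equiv -1$ and $(p^s-1)^n\equiv(-1)^n$) gives
\[
\Tr(\Lambda^s)\;\equiv\;1+(-1)^{n+1}\#X_f(\F_{p^s})\pmod{p^s},
\]
from which the claim is read off. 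Your route instead exponentiates the trace identity into a ratio of Fredholm determinants, factors them along the block decomposition, and exhibits $\det(I-T\Lambda)$ as the valuation-zero factor of the zeta function. This is more structural---one literally sees the characteristic polynomial of $\Lambda$ sitting inside $\mathcal Z_{\T^n\setminus X_f}$---but, as you correctly flag, it requires $p$-adic Fredholm theory on $\hat\Omega_f$ to make the infinite-dimensional determinants and their block factorisation rigorous. The paper's congruence argument sidesteps this entirely: modulo each fixed $p^s$ the operator $\cartier$ has finite-rank image, so all traces are elementary and no determinant on an infinite-dimensional space is ever written down. Both approaches leave the very last step (from the congruence or from the factorisation to the set-theoretic identification of eigenvalues with Frobenius roots, including the stray factor $(1-T)^{\pm1}$ coming from $\mathcal Z_{\T^n}$) somewhat implicit.
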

\begin{proof} We reproduce the argument from~\cite[Remark A.2]{DCI}. By part~(iv) of Theorem~\ref{cartier-props} we have for all $s \ge 1$
\[
(p^s-1)\; {\rm Trace}(\,\cartier^s \,|\, \hat \Omega_f\,) = \# ( \T^n \setminus X_f)  (\F_p^s) = (p^s-1)^n - \#X_f(\F_{p^s}).
\]
Since $\cartier$ is divisible by $p$ on formal derivatives, this yields
\[
{\rm Trace}(\Lambda^s) \is {\rm Trace}(\,\cartier^s \,|\, \hat \Omega_f\,) \is  1 + (-1)^{n+1} \#X_f(\F_{p^s}) \mod {p^s}.
\]
Our claim follows from this.
\end{proof}

\begin{problem} There are examples when the Hasse--Witt condition holds for a subset $\mu \subsetneq \Delta$ but not for the whole $\Delta$. We expect that in this case the eigenvalues of $\Lambda(\mu)$ will be Frobenius roots as well, however the proof of~(iv) in Theorem~\ref{cartier-props} was given in the appendix to~\cite{DCI} only in the case of $\mu=\Delta$. Can one adopt the Dwork trace formula to subsets $\mu$ open in the topology of \S\ref{sec:diff-forms}? 
\end{problem}

If the above question is answered positively, one would obtain a combinatorial structure on the Frobenius roots in the form of their belonging to open sets $\mu \subseteq \Delta$. This is particularly interesting in the view of Remark~\ref{geometric-remark}, where the weight filtration on de Rham cohomology is given in terms of a sequence of open subsets of $\Delta$. 

\subsection{The contraction property}\label{sec:contraction}

\begin{proposition}\label{key-contraction-lemma}
Let $M_0,M_1,M_2,\ldots$ be an infinite sequence of $R$-modules and
$\phi_i:M_{i-1}\to M_i$ $R$-linear maps for all $i\ge1$.
Suppose that $\cap_{s\ge1}p^sM_i=\{0\}$ for all $i$. For each $i$ let $N_i$ be a 
submodule of $M_i$ such that $\phi_i(M_{i-1})\subset N_i+pM_i$ for all $i\ge1$.
Suppose that $N_i \cap p M_i = p N_i$ 
(equivalently, $M_i/N_i$ is $p$-torsion free) and the induced maps $\phi_i:N_{i-1}/pN_{i-1}\to N_i/pN_i$
are isomorphisms for all $i\ge1$.  
Define submodules
\[
U_i=\{\omega\in M_i| \phi_{i+s}\circ\phi_{i+s-1}\circ\cdots\circ\phi_{i+1}(\omega)\is0\mod{p^sM_{i+s}}
\ \mbox{for all $s\ge1$}\} \subset M_i. 
\]

Then, for all $i$,
\begin{enumerate}
\item[(i)] $M_i=N_i+U_i$. 
\item[(ii)] $\phi_i(U_{i-1})\subset p U_i$.
\item[(iii)] $\phi_i(M_{i-1})\subset N_i+p U_i$.
\item[(iv)] $N_i\cap U_i=\{0\}$.
\end{enumerate}
\end{proposition}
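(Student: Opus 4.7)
The four claims arrange themselves naturally in the order (iv), (ii), (i), (iii): parts (iv) and (ii) are short consequences of the isomorphism $\overline{\phi}_j\colon N_{j-1}/pN_{j-1} \xrightarrow{\sim} N_j/pN_j$ applied to the defining condition for $U_\bullet$; part (i) is the main content and requires a convergent successive-approximation argument; part (iii) then follows formally from (i) and (ii) together with the standing hypothesis on $\phi_i$.

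For (iv), take $\omega \in N_i \cap U_i$ and decompose $\phi_{i+1}(\omega) = n' + pm'$ with $n' \in N_{i+1}$. The condition $\phi_{i+1}(\omega) \in pM_{i+1}$ coming from $\omega \in U_i$ forces $n' \in N_{i+1} \cap pM_{i+1} = pN_{i+1}$, i.e.\ $\overline{\phi}_{i+1}(\overline{\omega}) = 0$ in $N_{i+1}/pN_{i+1}$; injectivity then gives $\omega \in pN_i$. Iterating the same reasoning one level deeper via the composed isomorphism $\overline{\phi}_{i+2} \circ \overline{\phi}_{i+1}$ and the $s{=}2$ condition yields $\omega \in p^2 N_i$, and so on, so separation of $M_i$ forces $\omega \in \bigcap_k p^k N_i = \{0\}$.

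For (ii), take $u \in U_{i-1}$ and write $\phi_i(u) = pv$. The relation
\[
p \cdot \phi_{i+s} \circ \cdots \circ \phi_{i+1}(v) \;=\; \phi_{i+s} \circ \cdots \circ \phi_i(u) \;\in\; p^{s+1} M_{i+s}
\]
immediately extracts $\phi_{i+s} \circ \cdots \circ \phi_{i+1}(v) \in p^s M_{i+s}$ for every $s \ge 1$, so $v \in U_i$ and $\phi_i(u) \in pU_i$. For (i), given $\omega \in M_i$ I would build inductively a sequence $n_0, n_1, n_2, \ldots$ with $n_k \in p^k N_i$ so that the partial sums $S_k = \sum_{j<k} n_j$ make $\phi_{i+s}\circ\cdots\circ\phi_{i+1}(\omega - S_k) \in p^s M_{i+s}$ for all $1 \le s \le k$. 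Starting from $\phi_{i+1}(\omega) = n^* + pm^*$, surjectivity of $\overline{\phi}_{i+1}$ produces $n_0 \in N_i$ lifting $\overline{n^*}$, and makes $\phi_{i+1}(\omega - n_0) \in pM_{i+1}$. At step $k$, the defect at level $s{=}k{+}1$ is divisible by $p^k$; after iterated decomposition it reveals an $N_{i+k+1}/pN_{i+k+1}$-class which one pulls back through the composed isomorphism $\overline{\phi}_{i+k+1} \circ \cdots \circ \overline{\phi}_{i+1}$ to a class in $N_i/pN_i$, and multiplies by $p^k$ to get $n_k \in p^k N_i$. Because $n_k \in p^k N_i$ the partial sums $(S_k)$ form a $p$-adic Cauchy sequence in $N_i$; under the $p$-adic completeness inherited from the ambient setting in which this proposition will be applied, they converge to some $n \in N_i$, and separation of $M_{i+s}$ then certifies that $u := \omega - n$ lies in $U_i$.

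Finally, (iii) is immediate: by (i) applied to $M_{i-1}$, $\phi_i(M_{i-1}) = \phi_i(N_{i-1}) + \phi_i(U_{i-1}) \subset (N_i + pM_i) + pU_i$, and (i) applied to $M_i$ rewrites $pM_i = pN_i + pU_i$, so $N_i + pM_i = N_i + pU_i$, whence $\phi_i(M_{i-1}) \subset N_i + pU_i$. The main obstacle is the inductive construction in (i): the existence of each correction $n_k$ depends on surjectivity of $\overline{\phi}_{i+k+1} \circ \cdots \circ \overline{\phi}_{i+1}$ (to realise the required lift), while the Cauchy convergence of $(S_k)$ and the subsequent transfer to $u \in U_i$ use $p$-adic separation in an essential way; injectivity of the $\overline{\phi}_j$ is the dual ingredient used in (iv). A secondary subtlety is that (ii) tacitly inverts multiplication by $p$ in $M_i$: this is harmless when $M_i$ is $p$-torsion free, which is the situation in the intended application.
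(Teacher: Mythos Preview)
The paper does not give its own proof here but simply refers to \cite[Proposition~4.5]{DCI}; your successive--approximation construction for (i), the injectivity argument for (iv), and the formal deductions of (ii) and (iii) are exactly the standard line and match what one finds there. Two remarks. First, the $p$-torsion-freeness you flag for (ii) is also silently used in your iteration for (iv): passing from $p n_2 \in p^2 M_{i+2}$ to $n_2 \in p M_{i+2}$ needs it. Second, your instinct that $p$-adic completeness of $N_i$ is required for (i) is not a mere technicality but an actual missing hypothesis in the statement as written: take $R=\Z_{(p)}$, $M_i=\Z_{(p)}^2$, $N_i=\Z_{(p)}\cdot(1,0)$, and $\phi_i(a,b)=(a+\beta_i b,\,pb)$ where $\beta_1,\beta_2,\ldots\in\{0,\ldots,p-1\}$ are the $p$-adic digits of some $\alpha\in\Z_p\setminus\Z_{(p)}$; then all listed hypotheses hold, yet $U_0=\{(a,b):a=-\alpha b\}\cap\Z_{(p)}^2=\{0\}$ and $N_0+U_0=N_0\ne M_0$. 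In every application in the paper one has $M_i=\hat\Omega_{f^{\sigma^i}}(\mu)$ and $N_i$ finite free over the $p$-adically complete ring $R$, so both extra conditions are in force and your argument goes through.
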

\begin{proof} The reader may wish to prove this lemma as an exercise or read the proof of~\cite[Proposition 4.5]{DCI}.
\end{proof}

\begin{proof}[Proof of Theorem~\ref{DCI-main}] We apply Proposition~\ref{key-contraction-lemma} to $M_i=\widehat\Omega_{f^{\sigma^i}}(\mu)$ and 
$\phi_i=\cartier$
for all $i\ge0$. For $N_i$ we take the $\Omega^{(1)}_{f^{\sigma^i}}(\mu)$. 
The property $N_i \cap p M_i = p N_i$ clearly holds. Proposition~\ref{Cartier-mod-p} states
that $\phi_i(M_i) \subset N_i + p M_i$ and the matrix of $\phi_i: N_{i-1}/pN_{i-1} 
\to N_i / p N_i$ is given by $HW^{\sigma^{i-1}}(\mu) \mod p$. Its determinant is then congruent to $\det(HW)^{p^i} \mod p$, which is invertible under the assumption in Theorem~\ref{DCI-main}. So the assumptions of Proposition~\ref{key-contraction-lemma} are satisfied.

By Lemma~\ref{Katz-lemma}(ii) we find that $U_0 = \widehat\Omega_f(\mu) \cap d\Omega_{\rm formal}$. Then application of parts (i) and (iv) of Proposition~\ref{key-contraction-lemma} shows that 
\[
\hat\Omega_f(\mu)=\Omega_f^{(1)}(\mu) \oplus U_0
\]
as $R$-modules. Part~(iii) shows that $\cartier(\widehat\Omega_f) \subseteq \Omega_{f^\sigma}^{(1)}(\mu)  +p \, d \Omega_{\rm formal}$ as claimed.
\end{proof}

\subsection{Proof of Gauss congruences}\label{sec:gauss-cong-proof} We are now ready to prove Theorem~\ref{gauss-thm}. Let us recall its formulation. We assume that $f(\v x) \in \Z[x_1^{\pm 1},\ldots,x_n^{\pm 1}]$ has a Newton polytope $\Delta$ whose only integral points are vertices. Then for any Laurent polynomial $h(\v x)$ with coefficients in $\Z$ and $\supp(h) \subset \Delta$ the coefficients of any formal expansion 
\[
\frac{h(\v x)}{f(\v x)} = \sum_{\v v \in \Z^n} c_\v v \v x^{\v v}
\]
satisfy congruences $c_{\v v} \is c_{\v v/p} \mod {p^{\ord_p(\v v)}}$ for every prime $p$ which doesn't divide any of the coefficients of $f(\v x)$.  

The desired congruence is equivalent to the fact that $\omega=h(\v x)/f(\v x)$ satisfies
\[
\cartier \omega - \omega \, \in p \, d\Omega_{\rm formal},
\]
where $d \Omega_{\rm formal}$  is the module of formal derivatives introduces in \S\ref{sec:formally-exact} . It clearly suffices to prove this claim for $\omega=\v x^{\v u}/f(\v x)$ where $\v u$ is a vertex of $\Delta$. Consider
\[
\mu = \Delta \setminus \text{union of all faces which do not contain }\v u.
\] 
This set is \emph{open} in the topology of \S\ref{sec:diff-forms} and its only integral point is $\v u$, that is we have $\mu_\Z = \{\v u\}$. The $1 \times 1$ Hasse--Witt matrix for this $\mu$ is
\[
HW(\mu) = \text{ coefficient of } \v x^{(p-1)\v u} \text{ in } f(\v x)^{p-1} = f_{\v u}^{p-1} \is 1 \mod p.
\]
Here $f_{\v u} \in \Z$ is the coefficients at $\v x^\v u$ in $f(\v x)$ and the congruence holds because $p \nmid f_{\v u}$ by our assumption on $p$. Hence the Hasse--Witt condition holds for $\mu$ over $R=\Z_p$ and by Theorem~\ref{DCI-main} applied with the trivial Frobenius lift $\sigma=id$ there exists a unique $\lambda \in \Z_p$ such that 
\be{gauss-cong-proof-line}
\cartier \omega = \lambda \,  \omega\; \mod {p \, d\Omega_{\rm formal}}.
\ee
By Theorem~\ref{cartier-props} the Cartier operation, and hence this $p$-adic number $\lambda$, is independent of the choice of vertex of $\Delta$ at which the formal expansion is done. Hence we can use expansion at $\v u$ to determine $\lambda$. For this expansion we have $c_{\v 0}=f_{\v u}^{-1} \ne 0$, and therefore comparing the constant terms on the two sides of~\eqref{gauss-cong-proof-line} we find that $\lambda = 1$ as desired.

\section{Periods}\label{sec:periods}

In this lecture we will focus on computation of the Cartier action modulo formal derivatives which were introduced in \S\ref{sec:formally-exact}. The setup is like in \S\ref{sec:diff-forms}-\ref{sec:formally-exact}. We work with a Laurent polynomial $f(\v x) \in R[x_1^{\pm 1},\ldots,x_n^{\pm 1}]$ with coefficients in a characteristic~0 ring $R$ such that $\cap_{s \ge 1} p^s R = \{0\}$. We also assume that $R$ is $p$-adically complete. 

The Newton polytope of $f(\v x)$ is denoted by $\Delta \subset \R^n$. Let $\mu \subseteq \Delta$ be an open set in the sense of \S\ref{sec:diff-forms}. Recall that, if the Hasse--Witt condition holds for $\mu$, Theorem~\ref{DCI-main} implies existence of a unique matrix $\Lambda(\mu) = (\lambda_{\v u, \v v})_{\v u, \v v \in \mu_\Z}$ with entries in $R$ such that
\be{lambda-matrix-def}
\cartier\left(\frac{\v x^{\v u}}{f(\v x)}\right) = \sum_{\v v \in \mu_\Z} \lambda_{\v u, \v v} \, \frac{\v x^{\v v}}{f^\sigma(\v x)} \quad \mod {p \, d \Omega_{\rm formal}}
\ee
for every $\v u \in \mu_\Z$. We are going to exploit divisibility properties of coefficients of elements of $d \Omega_{\rm formal}$ to compute the Cartier entries $\lambda_{\v u, \v v} \in R$. We first explain a straightforward approach to this question, which is a version of Nick Katz's \emph{Internal reconstruction of unit-root F-crystals via expansion coefficients} in~\cite{Katz}. Later we will move to more sophisticated methods using period maps. 

\subsection{$p$-adic interpolation of Cartier matrices via expansion coefficients}\label{sec:cartier-interpolation-on-unit-root}
In Section~\ref{sec:formal-exp} we explained the procedure of formal expansion of rational functions with respect to a vertex $\v b$ of $\Delta$. We have expansions 
\[
\omega = \sum_{\v u \in \Z^n} c_\v u(\omega) \v x^\v u, \qquad \omega \in \Omega_f.
\]

\begin{proposition}\label{Katz-method} For any vector $\v w \in \Z^n$ one can consider the sequence of vectors $A_s \in R^{\#\mu_\Z}$, $s \ge 1$ with entries 
\[
(A_s)_{\v u \in \mu_\Z} = c_{p^s \v w} \left(\frac{\v x^{\v u}}{f(\v x)}\right).
\]
Then 
\[
A_s \is \Lambda(\mu) \, \sigma(A_{s-1}) \quad \mod {p^s}.
\] 
\end{proposition}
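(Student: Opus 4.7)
The plan is to read off the congruence from a single coefficient in the defining relation for $\Lambda(\mu)$, exploiting the fact that iterating $\cartier$ realises the map $c_\v w\mapsto c_{p^s\v w}$ on expansion coefficients.

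I will start from the shift identity $c_\v w(\cartier\omega)=c_{p\v w}(\omega)$, which is immediate from the definition $\cartier(\sum c_\v v \v x^\v v)=\sum c_{p\v v}\v x^\v v$. Iterating gives $(A_s)_\v u = c_{p^s\v w}(\v x^\v u/f(\v x))=c_\v w(\cartier^s(\v x^\v u/f(\v x)))$. By the defining relation~\eqref{lambda-matrix-def},
\[
\cartier\!\left(\frac{\v x^\v u}{f(\v x)}\right) = \sum_{\v v\in\mu_\Z}\lambda_{\v u,\v v}\,\frac{\v x^\v v}{f^\sigma(\v x)} + p\,\eta_\v u,\qquad \eta_\v u\in d\Omega_{\rm formal}.
\]
The next step is to extract the coefficient $c_{p^{s-1}\v w}$ from both sides. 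The left-hand side becomes $c_{p^s\v w}(\v x^\v u/f)=(A_s)_\v u$ by the shift identity. In the sum on the right, $c_{p^{s-1}\v w}(\v x^\v v/f^\sigma)=\sigma((A_{s-1})_\v v)$ by $\sigma$-equivariance of the expansion procedure from \S\ref{sec:formal-exp}. For the error term, Lemma~\ref{Katz-lemma}(i) gives that any coefficient of a formal derivative at index $\v y$ lies in $\gcd(y_1,\ldots,y_n)R$; with $\v y=p^{s-1}\v w$ and $\v w\neq\v 0$ this guarantees divisibility by $p^{s-1}$, so the whole error contribution sits in $p^s R$. Assembling these three pieces yields $(A_s)_\v u\equiv\sum_{\v v}\lambda_{\v u,\v v}\,\sigma((A_{s-1})_\v v)\mod{p^s}$, which is exactly $A_s\equiv\Lambda(\mu)\,\sigma(A_{s-1})\mod{p^s}$.

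The step I expect to require the most care is justifying the functoriality identity $c_{p^{s-1}\v w}(\v x^\v v/f^\sigma)=\sigma(c_{p^{s-1}\v w}(\v x^\v v/f))$. This will reduce to observing that the binomial procedure from \S\ref{sec:formal-exp}, namely $\v x^\v u/f^m=f_\v b^{-m}\v x^{\v u-m\v b}\sum_{s\ge 0}\binom{-m}{s}\ell(\v x)^s$, is assembled entirely from ring operations in $R$ together with inversion of the vertex coefficient $f_\v b\in R^\times$; since $\sigma:R\to R$ is a ring endomorphism of the $p$-adically complete ring $R$, applying $\sigma$ coefficientwise to $f$ and then expanding must agree with expanding first and then applying $\sigma$ coefficientwise. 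The degenerate case $\v w=\v 0$ is harmless: then $(A_s)_\v u$ is independent of $s$ and the claim reduces to the $s=1$ instance, which is the defining relation at the index $\v 0$.
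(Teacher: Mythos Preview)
Your proof is correct and follows essentially the same approach as the paper: take the expansion coefficient at $p^{s-1}\v w$ on both sides of the defining relation~\eqref{lambda-matrix-def}, use the shift identity $c_{p^{s-1}\v w}\circ\cartier=c_{p^s\v w}$ on the left, and invoke Lemma~\ref{Katz-lemma} to control the formal-derivative error term. You have simply filled in details the paper leaves implicit, notably the $\sigma$-equivariance $c_{p^{s-1}\v w}(\v x^\v v/f^\sigma)=\sigma\bigl(c_{p^{s-1}\v w}(\v x^\v v/f)\bigr)$ and the degenerate case $\v w=\v 0$ (where in fact the error term vanishes exactly, since the constant coefficient of any formal derivative is zero).
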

\begin{proof}
Let us take the expansion coefficient at $p^{s-1} \v w$ on both sides of~\eqref{lambda-matrix-def}. By Lemma~\ref{Katz-lemma} this coefficient is divisible by $p^{s-1}$ on $d \Omega_{\rm formal}$, and therefore we obtain
\[
c_{p^{s}\v w}\left(\frac{\v x^{\v u}}{f(\v x)}\right)  \is \sum_{\v v \in \mu_\Z} \lambda_{\v u, \v v} \, c_{p^{s-1} \v w}\left(\frac{\v x^{\v v}}{f^\sigma(\v x)}\right)  \; \mod {p^{s}}.
\]
This congruence is precisely our claim. 
\end{proof}
  
Usually one can determine $\Lambda(\mu)$ from congruences as in Proposition~\ref{Katz-method} taken for several exponent vectors $\v w$. 

\subsection{Period maps}

\begin{definition} A \emph{period map} with values in an $R$-module $S$ is a homomorphism of $R$-modules $\sP:\Omega_f \to S$ which vanishes on $d \Omega_f$.
Values of a period map are called~\emph{periods}.
\end{definition}

The classical notion of periods for algebraic varieties was introduced by Alexander Grothendieck. Since elements of $d\Omega_f$ correspond to exact differential $n$-forms on $\T^n \setminus X_f$, there are classical period maps of integration along topological n-cycles $Y \subset (\T^n \setminus X_f)(\C)$:
\[
\sP_{\emph{Y}} : \omega \to \int_{\emph{Y}} \omega. 
\]  
Here we identify elements of $\Omega_f$ with differential forms on the complement $\T^n \setminus X_f$ as was explained in \S\ref{sec:diff-forms}, that is $\frac{h(\v x)}{f(\v x)^m} \mapsto \frac{h(\v x)}{f(\v x)^m}\frac{x_1}{x_1}\ldots\frac{x_n}{x_n}$. Elements of $d \Omega_f$ correspond to exact forms, and therefore $\sP_Y(d\Omega_f)=0$. In general such maps take values in $\C$, or in analytic functions in case of families of hypersurfaces, and we do not expect them to behave well with respect to the Cartier operation. Our goal will be to construct Cartier-invariant period maps. We start with an example.

\begin{example}\label{period0example} Take $f(\v x)=1-t g(\v x)$ with $g(\v x) \in \Z[x_1^{\pm 1}, \ldots, x_n^{\pm 1}]$. Assume that $\v 0 \in \Delta$ (not necessarily a vertex or an internal point). We can work with any ring $R$ such that $\Z[t] \subset R \subset \Z_p\lb t \rb$. Consider the formal expansion of rational functions at $\v 0$:
\[
\frac{h(\v x)}{f(\v x)^m} = h(\v x) \sum_{s \ge 0} \binom{s+m-1}{m-1} t^s g(\v x)^s = \sum_{\v u \in \Z^n} c_{\v u}(t) \v x^\v u.
\]
This expansion is convergent $t$-adically, and the coefficients $c_\v u(t)$ belong to the ring of formal power series $S = \Z_p\lb t \rb$. Then
\[
\sP_{\v 0} : \omega \mapsto c_{\v 0}(\omega)
\]
is a period map with values in $\Z_p\lb t \rb$ which
\begin{itemize}
\item vanishes on $d\Omega_{\rm formal}$,
\item is $\cartier$-invariant.
\end{itemize}
The first property is clear because the constant terms of logarithmic derivatives $x_i\frac{\partial }{\partial x_i} \nu$ are zero. The second property follows from the fact that the Cartier operation acts on formal expansions at $\v 0$ is the same way as on expansions at vertices of the Newton polytope. That is, we have $\cartier: \sum_\v u c_\v u(t) \v x^\v u \mapsto \sum_\v u c_{p \v u}(t) \v x^\v u$. The reader who is not convinced may find a discussion of this later point in~\cite[\S 2]{DCII}. 
\end{example} 

We note that the period map in the above example is of the kind of earlier mentioned integration maps. This is integration along the $n$-cycle $Y= S^1 \times \cdots \times S^1$, if we divide the result by $(2 \pi i)^n$, or the residue map at $\v 0$. One can write this fact as $\sP_\v 0 = \frac1{(2 \pi i)^n} \sP_{S^1 \times \cdots \times S^1}$.

Now we can revisit \S\ref{sec:dwork-cong}. We prove the following fact.

\begin{theorem}\label{frob-unit-root-p-adic-analytic-thm}  Take $f(\v x)=1-t g(\v x)$ with $g(\v x) \in \Z[x_1^{\pm 1}, \ldots, x_n^{\pm 1}]$. Assume that the Newton polytope $\Delta$ has only one interior integral point. Assume further that this interior point is the origin, that is $\Delta^\circ \cap  \Z^n = \{\v 0 \}$. For $i \ge 0$ we denote by $c_i \in \Z$ the constant term of $g(\v x)^i$. Consider the generating series 
\[
\gamma(t)=\sum_{i=0}^\infty c_i t^i = \frac1{(2 \pi i)^n}\oint \ldots \oint \frac1{1-t g(\v x)}\frac{dx_1}{x_1}\ldots \frac{dx_n}{x_n}
\] 
and denote its truncations by $\gamma_{m}(t)=\sum_{i=0}^{m-1}c_i t^i$. Then for any odd prime number $p$ one has
\[
\frac{\gamma(t)}{\gamma(t^p)} \in \Z[t,\gamma_{p}(t)^{-1}]\,\hat{\;}.
\] 
\end{theorem}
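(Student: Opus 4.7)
The plan is to specialize the machinery of \S\ref{sec:cartier-section} to this one-parameter family and then use the period map from Example~\ref{period0example} to turn the Cartier congruence into an exact identity in the ring $R=\Z[t,\gamma_p(t)^{-1}]\,\hat{\;}$.

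First I would fix the setting. Equip $R$ with the Frobenius lift $\sigma:t\mapsto t^p$ (well-defined on $R$ by Exercise~\ref{frob-lift-exercise}(iii), using that $\gamma_p(0)=c_0=1$). The Newton polytope of $f(\v x)=1-tg(\v x)$ coincides with that of $g$, and $\v 0$ is by hypothesis its only interior integer point. Let $\mu=\Delta^\circ$; this is open in the sense of \S\ref{sec:diff-forms} (its complement is the union of the proper faces of $\Delta$) and $\mu_\Z=\{\v 0\}$.

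Next I would verify the Hasse--Witt condition for $\mu$. Here $HW(\mu)$ is the $1{\times}1$ scalar given by the constant term of $(1-tg(\v x))^{p-1}=\sum_{i=0}^{p-1}\binom{p-1}{i}(-t)^ig(\v x)^i$, which equals $\sum_{i=0}^{p-1}\binom{p-1}{i}(-t)^ic_i$. Using $\binom{p-1}{i}\is(-1)^i \mod p$, this reduces modulo $p$ to $\gamma_p(t)$, and is therefore a unit in $R$ by construction. Theorem~\ref{DCI-main} then produces a unique $\lambda(t)\in R$ such that
\[
\cartier\Bigl(\frac{1}{f(\v x)}\Bigr)\;\is\;\lambda(t)\,\frac{1}{f^\sigma(\v x)}\quad\mod{p\,d\Omega_{\rm formal}},
\]
where $f^\sigma(\v x)=1-t^pg(\v x)$.

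Finally I would pass to periods. The map $\sP_{\v 0}$ of Example~\ref{period0example} extracts the coefficient of $\v x^{\v 0}$ in the formal expansion at $\v 0$; it vanishes on $d\Omega_{\rm formal}$ and satisfies $\sP_{\v 0}\circ\cartier=\sP_{\v 0}$ (immediate from the coefficient-extraction definitions, since $c_{p\cdot\v 0}=c_{\v 0}$). Expanding $1/(1-tg(\v x))=\sum_{s\ge 0}t^sg(\v x)^s$ and likewise for $f^\sigma$, one reads off $\sP_{\v 0}(1/f)=\gamma(t)$ and $\sP_{\v 0}(1/f^\sigma)=\gamma(t^p)$. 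Applying $\sP_{\v 0}$ to the displayed congruence kills the error term (since $\sP_{\v 0}$ annihilates $d\Omega_{\rm formal}$) and yields the exact identity $\gamma(t)=\lambda(t)\,\gamma(t^p)$ in $R$. Since $\gamma(t^p)$ has constant term $1$ and is therefore a unit in $\Z_p\lb t\rb\supset R$, division gives $\gamma(t)/\gamma(t^p)=\lambda(t)\in R$, which is the claim. The only genuine computation is the identification of $HW(\mu)$ with $\gamma_p(t)$ modulo $p$; the subtle point worth verifying carefully is the Cartier-invariance $\sP_{\v 0}\circ\cartier=\sP_{\v 0}$, which ultimately rests on the independence-of-vertex statement in Theorem~\ref{cartier-props}(i) applied to the interior expansion point $\v 0$.
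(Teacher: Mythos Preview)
Your proof is correct and follows essentially the same route as the paper's own proof: set $\mu=\Delta^\circ$, check $HW(\mu)\is\gamma_p(t)\mod p$ so the Hasse--Witt condition holds over $R=\Z[t,\gamma_p(t)^{-1}]\,\hat{\;}$, invoke Theorem~\ref{DCI-main} to get $\lambda\in R$ with $\cartier(1/f)\is\lambda/f^\sigma\mod{p\,d\Omega_{\rm formal}}$, and then apply the period map $\sP_{\v 0}$ of Example~\ref{period0example} to obtain $\gamma(t)=\lambda\,\gamma(t^p)$. One small caveat: your justification of $\sP_{\v 0}\circ\cartier=\sP_{\v 0}$ via Theorem~\ref{cartier-props}(i) is slightly off, since that statement concerns vertices of $\Delta$ while $\v 0$ is an interior point; the paper simply cites this $\cartier$-invariance as one of the two listed properties in Example~\ref{period0example} (with details deferred to \cite[\S 2]{DCII}), and you may do the same.
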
 
\begin{proof} Consider $\mu=\Delta^\circ$. The respective Hasse-Witt matrix (of rank~1 in this case) is given by $HW(t) = \text{ constant term of } (1 - t g(\v x))^{p-1} = \sum_{i=0}^{p-1} (-1)^i \binom{p-1}{i} c_i t^i$. Since $HW(t) \is \gamma_{p}(t) \mod p$, it follows that $HW(t)$ is invertible in the ring $R=\Z[t,\gamma_{p}(t)^{-1}]\,\hat{\;}$. Let $\sigma: R \to R$ be the Frobenius lift $\sigma(r(t))=r(t^p)$. By Theorem~\ref{DCI-main} there exists $\lambda \in R$ such that 
\be{lambda-in-dwork-cong}
\cartier \frac1{f(\v x)} = \lambda  \frac1{f^\sigma(\v x)} \mod { p\, d \Omega_{\rm formal}}. 
\ee
We apply to this congruence the period map $\sP_{\v 0}$ defined in Example~\ref{period0example}. Note that $\sP_\v 0(1/f(\v x)) = \gamma(t)$. Using the two properties of the map $\sP_{\v 0}$ we get
\[
\gamma(t) = \lambda \, \gamma(t^p),
\]
from which it follows that $\lambda = \gamma(t)/\gamma(t^p)$. Since $\lambda \in R$, this proves our claim.
\end{proof}

Our next goal will be to prove an explicit $p$-adic approximation to $\gamma(t)/\gamma(t^p)$ by ratios of truncations $\gamma_{p^s}(t)/\gamma_{p^{s-1}}(t^p)$. For that we should evidence that truncations are periods. In fact, they will be periods modulo $p^s$ which we shall now introduce. 

\subsection{Periods modulo $p^s$}\label{sec:periods-mod-p-s}

\begin{definition} Let $S$ be an $R$-module such that $\cap_s p^s S=\{0\}$. An $R$-linear map $\rho: \Omega_f \to S$ such that $\rho(d \Omega_f) \subset p^s S$ is called a period map modulo $p^s$. 
\end{definition}

\begin{example}\label{exp-coeffs-as-periods} Let $\omega \mapsto \sum_{\v u \in \Z^n} c_{\v u}(\omega) \v x^\v u$ be either one of the procedures of formal expansion introduced earlier. It can be either an expansion at a vertex $\v b \in \Delta$ as in \S\ref{sec:formal-exp} or an expansion at $\v 0 \in \Delta$ as in Example~\ref{period0example}. We take $S=R$ and $S=\Z_p\lb t \rb$ in these cases respectively. Then for any exponent vector $\v w \in \Z^n$ the map \[
\omega \mapsto c_{p^s \v w}(\omega)
\]
is a period map modulo $p^s$ with the following properties:
\begin{itemize}
\item $c_{p^s \v w}\left(d\Omega_{\rm formal}\right) \subset p^s S$,
\item $c_{p^s \v w} = c_{p^{s-1} \v w} \circ \cartier$.
\end{itemize}
\end{example}

\begin{example}\label{generalized-exp-coeffs} Let $\omega \mapsto \sum_{\v u \in \Z^n} c_{\v u}(\omega) \v x^\v u$ be either one of the procedures of formal expansion introduced earlier with coefficients in $S=R$ or $S=\Z_p\lb t \rb \supset R$.  Let $\ell(\v x) \in S[x_1^{\pm 1},\ldots,x_n^{\pm 1}]$ be a Laurent polynomial. Then
\[
\rho_{s,\ell} : \omega \mapsto c_{\v 0}\left( \ell(\v x)^{p^s} \omega\right) 
\]
is a period map modulo $p^s$ with the following properties:
\begin{itemize}
\item $\rho_{s,\ell}\left(d\Omega_{\rm formal}\right) \subset p^s S$,
\item $\rho_{s,\ell} \is \rho_{s-1,\ell^\sigma} \circ \cartier \; \mod {p^s}$.
\end{itemize}
The reason that the first property holds is that 
\[
\ell(\v x)^{p^s} x_i\frac{\partial }{\partial x_i} \nu \is  x_i\frac{\partial }{\partial x_i} \left( \ell(\v x)^{p^s} \nu\right) \mod {p^s}.
\]
We than take $c_{\v 0}$ and obtain that $\rho_{s,\ell}(x_i\frac{\partial }{\partial x_i} \nu) \in p^s S$.
Proof of the second property is left an an exercise for the reader.
\end{example}

The fact that expansion coefficients are period maps (Example~\ref{exp-coeffs-as-periods}) was actually used in the proof of Proposition~\ref{Katz-method}. Example~\ref{generalized-exp-coeffs} provides a generalisation, which we shall now use to prove Dwork's congruences announced in \S\ref{sec:dwork-cong}.

\begin{exercise}\label{Dwork-cong-exercise} Prove Theorem~\ref{dwork-congs-them}. For that, check that $\gamma_{p^s}(t)=(\rho_{s,1}-\rho_{s,t g(\v x)})(1/f(\v x))$ and apply these period maps to the identity~\eqref{lambda-in-dwork-cong}.
\end{exercise}  

Now we are in a position to prove the following generalisation of Theorem~\ref{HHW-main}. 

\begin{theorem}\label{DCI-Dwork-cong-thm} Let $\mu \subseteq \Delta$ be an \emph{open} set. We consider the sequence of square matrices 
\[
\beta_m(\mu)_{\v u, \v v \in \mu_\Z} = \text{ coefficient of } \v x^{m \v v - \v u} \text { in } f(\v x)^{m-1}.
\]
Suppose that the Hasse--Witt condition holds for $\mu)$, that is matrix $\beta_p(\mu) = HW(\mu)$ is invertible, and let $\Lambda(\mu)$ be the Cartier matrix~\eqref{lambda-matrix-def} on the quotient of $\Omega_f(\mu)$ by the submodule of formal derivatives. Then for every $s \ge 1$ we have
\[
\beta_{m p^s}(\mu) \is \Lambda(\mu) \sigma(\beta_{m p^{s-1}}(\mu)) \;\mod {p^s}. 
\]
In particular, when $m=1$,
\[
\Lambda(\mu) \is \beta_{p^s}(\mu) \circ \sigma(\beta_{p^{s-1}}(\mu))^{-1} \;\mod {p^s}.
\]
\end{theorem}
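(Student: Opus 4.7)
The plan is to realise each matrix entry $\beta_{mp^s}(\mu)_{\v u,\v v}$ as the value of a tailor-made period map modulo $p^s$ applied to $\v x^\v u/f(\v x)$, and then transport the defining identity~\eqref{lambda-matrix-def} for $\Lambda(\mu)$ through this period map by invoking the recursion in Example~\ref{generalized-exp-coeffs}. This reduces the theorem to bookkeeping of two observations: a clean identification of period values with coefficients of $f^{N-1}$, and the $p$-adic contraction of the Cartier error term.

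\textbf{Step 1: choosing the period map.} Fix $m\ge 1$ and $\v v\in\mu_\Z$, and set
\[
\ell(\v x)\;:=\;f(\v x)^{m}\,\v x^{-m\v v}\;\in\;R[x_1^{\pm 1},\ldots,x_n^{\pm 1}].
\]
By Example~\ref{generalized-exp-coeffs}, $\rho^{(s)}_{m,\v v}:=\rho_{s,\ell}$ is a period map modulo $p^s$. When we apply it to $\omega=\v x^{\v u}/f(\v x)$ with $\v u\in\mu_\Z$, the product $\ell(\v x)^{p^s}\omega=f(\v x)^{mp^s-1}\,\v x^{\v u-mp^s\v v}$ is a genuine Laurent polynomial, so its ``formal expansion'' is itself and the constant-term functional picks out
\[
\rho^{(s)}_{m,\v v}\!\left(\frac{\v x^{\v u}}{f(\v x)}\right)\;=\;\bigl[\v x^{mp^s\v v-\v u}\bigr]\,f(\v x)^{mp^s-1}\;=\;\beta_{mp^s}(\mu)_{\v u,\v v}.
\]
An identical calculation with $f$ replaced by $f^\sigma$ shows that the $\sigma$-twisted version $\rho^{(s-1)}_{m,\v v,\sigma}$, built from $\ell^\sigma=(f^\sigma)^{m}\v x^{-m\v v}$, evaluates on $\v x^{\v v'}/f^\sigma(\v x)$ as $\sigma(\beta_{mp^{s-1}}(\mu)_{\v v',\v v})$.

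\textbf{Step 2: transport through Cartier.} The second property in Example~\ref{generalized-exp-coeffs} gives
\[
\rho^{(s)}_{m,\v v}\;\equiv\;\rho^{(s-1)}_{m,\v v,\sigma}\circ\cartier\pmod{p^s}.
\]
Applying both sides to $\v x^{\v u}/f(\v x)$ and expanding the right-hand side using the defining congruence
\[
\cartier\!\left(\frac{\v x^{\v u}}{f(\v x)}\right)\;=\;\sum_{\v v'\in\mu_\Z}\lambda_{\v u,\v v'}\frac{\v x^{\v v'}}{f^\sigma(\v x)}\pmod{p\,d\Omega_{\rm formal}},
\]
the error term $p\,d\Omega_{\rm formal}$ is killed to order $p\cdot p^{s-1}=p^s$ by the divisibility $\rho^{(s-1)}_{m,\v v,\sigma}(d\Omega_{\rm formal})\subseteq p^{s-1}R$. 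Together with the two identifications from Step~1, we obtain
\[
\beta_{mp^s}(\mu)_{\v u,\v v}\;\equiv\;\sum_{\v v'\in\mu_\Z}\lambda_{\v u,\v v'}\,\sigma\bigl(\beta_{mp^{s-1}}(\mu)_{\v v',\v v}\bigr)\pmod{p^s},
\]
which is exactly the matrix congruence $\beta_{mp^s}(\mu)\equiv\Lambda(\mu)\,\sigma(\beta_{mp^{s-1}}(\mu))\pmod{p^s}$.

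\textbf{Step 3: inversion and invertibility.} For the specialisation to $m=1$, one needs that $\sigma(\beta_{p^{s-1}}(\mu))$ is invertible modulo $p^s$; by Remark~\ref{p-adic-invertibility-rmk} it suffices that it be invertible mod~$p$. A simple induction on $s$ does this: $\beta_1(\mu)=I$, and $\beta_{p^s}(\mu)\equiv HW(\mu)\,\sigma(\beta_{p^{s-1}}(\mu))\pmod p$ by the first congruence combined with $\Lambda(\mu)\equiv HW(\mu)\pmod p$ (Proposition~\ref{Cartier-mod-p}), which is a product of invertible matrices by the Hasse--Witt hypothesis. Rearranging gives the claimed $\Lambda(\mu)\equiv\beta_{p^s}(\mu)\,\sigma(\beta_{p^{s-1}}(\mu))^{-1}\pmod{p^s}$. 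The only place demanding care is the passage in Step~2 from Cartier's error term $p\,d\Omega_{\rm formal}$ to an $O(p^s)$ error in $R$; this is automatic from the second bullet in Example~\ref{generalized-exp-coeffs}, but it is the conceptual heart of the argument and is precisely what replaces the elementary manipulations in~\cite{HHW}.
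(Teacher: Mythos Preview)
Your proof is correct and follows essentially the same approach as the paper: you use the period map $\rho_{s,\ell}$ of Example~\ref{generalized-exp-coeffs} with $\ell=f(\v x)^m\v x^{-m\v v}$, identify its values on $\v x^{\v u}/f(\v x)$ with the entries of $\beta_{mp^s}(\mu)$, and transport the defining congruence for $\Lambda(\mu)$ through the two properties of this period map. Your Step~3 justifying invertibility of $\sigma(\beta_{p^{s-1}}(\mu))$ is a useful addition that the paper leaves implicit.
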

\begin{proof} Denote $\omega_\v u = \v x^\v u / f(\v x)$ for $\v u \in \mu_\Z$. Matrix $\Lambda(\mu) = (\lambda_{\v u, \v v})$  satisfies
\[
\cartier(\omega_\v u) \is \sum_{\v v \in \mu_\Z} \lambda_{\v u, \v v} \omega_\v v \; \mod {p d\Omega_{\rm formal}}.
\]
For $\v w \in \mu_\Z$ let $\ell(\v x)= f(\v x)^{m}/\v x^{m\v w}$.  We apply to the above congruence the period map $\rho_{s-1, \ell^\sigma}$ defined in Example~\ref{generalized-exp-coeffs}. Using the two properties of this period map we get
\[
\rho_{s, \ell}(\omega_\v u) \is \sum_{\v v \in \mu_\Z} \lambda_{\v u, \v v} \, \rho_{s-1, \ell^\sigma}(\omega^\sigma_{\v v}) \; \mod {p^s}.
\]  
It remains to notice that 
\[
\rho_{s, \ell}(\omega_\v u) = c_{\v 0} \left( f(\v x)^{p^s m-1} \v x^{\v u - p^s m\v w} \right) = \beta_{p^s m}(\mu)_{\v u, \v w}.
\]
\end{proof} 

\section{Beyond the unit root part}

So far we developed a method which gives those Frobenius roots which are $p$-adic units. In the last lecture we will introduce \emph{higher Hasse-Witt conditions} and explain a far going generalization of the previous  results. These methods will allow to construct Cartier matrices on ($p$-adic completions of) the whole de Rham cohomology modules. We will also discuss applications and related phenomena of \emph{supercongruences}. 

The setup is like in \S\ref{sec:diff-forms}-\ref{sec:cartier}. We work with a Laurent polynomial $f(\v x) = \sum_{\v u} f_{\v u}\v x^\v u \in R[x_1^{\pm 1},\ldots,x_n^{\pm 1}]$ with coefficients in a characteristic~0 ring $R$ such that $\cap_{s \ge 1} p^s R = \{0\}$. We also assume that $R$ is $p$-adically complete and equipped with a Frobenius lift $\sigma: R \to R$. 

\subsection{Higher formal derivatives}

In \S\ref{sec:formal-exp} and \S\ref{sec:formally-exact} we defined an embedding of $\Omega_f$ into the space of formal power series 
\[
\Omega_{\rm formal} = \{ \sum_{\v u \in C(\Delta - \v b)\cap \Z^n} a_\v u \v x^\v u \;|\; a_\v u \in R \}
\]
by the procedure of formal expansion of rational functions at a vertex $\v b \in \Delta$. It is assumed that $f_{\v b} \in R^\times$.

\begin{definition} The submodule of $k$-th formal derivatives\[
d^k\Omega_{\rm formal} \subset \Omega_{\rm formal}
\]
is the $R$-module generated by elements of the form $\theta_{i_1}\cdots\theta_{i_k}\nu$, where $\theta_i=x_i\frac{\partial}{\partial x_i}$, $\nu\in\Omega_{\rm formal}$ and $1 \le i_j \le n$ for each index $j=1,\ldots,k$.  
\end{definition}

We have the following generalisation of Lemma~\ref{Katz-lemma}:

\begin{lemma}\label{Katz-lemma-k} A series $\nu=\sum_{\v u} a_\v u \v x^\v u \in \Omega_{\rm formal}$ is a $k$th formal derivative if and only if one of the following equivalent conditions holds
\begin{itemize}
\item[(i)] $a_\v u \in g.c.d.(u_1,\ldots,u_n)^k R$ for each $\v u$,
\item[(ii)] $\cartier^s(\nu) \in p^{ks} \Omega_{\rm formal}$ for each $s \ge 1$.
\end{itemize}
\end{lemma}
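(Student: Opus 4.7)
The plan is to mirror the proof of the $k=1$ case (Lemma~\ref{Katz-lemma}) and establish three implications: (i)$\Leftrightarrow$(``$\nu$ is a $k$th formal derivative'') and (i)$\Leftrightarrow$(ii). The key arithmetic input throughout is that the $\Z$-ideal $(u_1,\ldots,u_n)\subset\Z$ is principal and generated by $\gcd(u_1,\ldots,u_n)$, so its $k$th power equals $(\gcd(u_1,\ldots,u_n)^k)$.

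First I would prove the equivalence of (i) with being a $k$th formal derivative. Since $\theta_i(\v x^\v u)=u_i\v x^\v u$, any element of the form $\theta_{i_1}\cdots\theta_{i_k}\mu$ with $\mu=\sum b_\v u \v x^\v u$ has coefficient $u_{i_1}\cdots u_{i_k}\,b_\v u$ at $\v x^\v u$, hence the coefficient of any $k$th formal derivative at $\v x^\v u$ lies in the $R$-ideal generated by all products $u_{i_1}\cdots u_{i_k}$, which equals $\gcd(u_1,\ldots,u_n)^k R$ by the remark above. Conversely, assuming (i), for each $\v u$ choose a decomposition $a_\v u=\sum_{(i_1,\ldots,i_k)}u_{i_1}\cdots u_{i_k}\,c_{\v u,i_1,\ldots,i_k}$ with $c_{\v u,i_1,\ldots,i_k}\in R$; then
\[
\nu \;=\; \sum_{(i_1,\ldots,i_k)}\theta_{i_1}\cdots\theta_{i_k}\Bigl(\sum_\v u c_{\v u,i_1,\ldots,i_k}\v x^\v u\Bigr),
\]
where each inner formal sum defines an element of $\Omega_{\rm formal}$ since its support stays in the cone $C(\Delta-\v b)$. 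This realises $\nu$ as a $k$th formal derivative.

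Next I would prove (i)$\Rightarrow$(ii) directly: as $\cartier^s$ sends $\sum a_\v u \v x^\v u$ to $\sum a_{p^s\v u}\v x^\v u$ and $\gcd(p^s u_1,\ldots,p^s u_n)=p^s\gcd(u_1,\ldots,u_n)$, condition (i) gives $a_{p^s\v u}\in p^{ks}\gcd(\v u)^k R\subseteq p^{ks}R$, hence $\cartier^s(\nu)\in p^{ks}\Omega_{\rm formal}$. For (ii)$\Rightarrow$(i) I would use that, being $p$-adically complete, $R$ is a $\Z_p$-algebra so every prime $\ell\ne p$ is a unit. Fix $\v u$ and write $\gcd(\v u)=p^t m$ with $p\nmid m$; every $u_i$ is divisible by $p^t$, so $\v u=p^t\v u'$ with $\v u'\in\Z^n$ in the cone, and (ii) applied with $s=t$ yields $a_\v u\in p^{kt}R$. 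Since $m^k\in R^\times$, one has $p^{kt}R=(p^t m)^k R=\gcd(\v u)^k R$, giving (i).

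The only genuinely delicate step is (ii)$\Rightarrow$(i): it is what forces the hypothesis that $R$ be a $\Z_p$-algebra, because $\cartier$ only registers the $p$-adic part of the valuation of $\gcd(\v u)$, while criterion (i) asks for divisibility by the full gcd. Everything else reduces to the observation that $\theta_i$ multiplies the $\v x^\v u$ coefficient by $u_i$ together with the Bezout identity for the principal ideal $(u_1,\ldots,u_n)\subset\Z$, exactly as in the $k=1$ proof, raised to the $k$th power.
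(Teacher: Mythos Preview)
Your proof is correct and follows the natural approach; the paper itself leaves this lemma as an exercise, so there is no alternative argument to compare against. Two small points worth tightening in (ii)$\Rightarrow$(i): when $\v u=\v 0$ the factorisation $\gcd(\v u)=p^t m$ with $p\nmid m$ is unavailable, but then (ii) forces $a_{\v 0}\in\bigcap_{s\ge1} p^{ks}R=\{0\}$ directly; and when $t=0$ you cannot invoke (ii) with $s=t$, but in that case $\gcd(\v u)$ is a unit in the $\Z_p$-algebra $R$, so condition~(i) is vacuous for that $\v u$.
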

\begin{proof}The proof is left as an exercise.\end{proof}

\subsection{Higher Hasse--Witt conditions}

Let $\mu \subset \Delta$ be an \emph{open} set in the finite topology introduced in \S\ref{sec:diff-forms}. There is a natural filtration on $\Omega_f(\mu )$ by the order of pole along the zero locus of $f(\v x)$: 
\[
\Omega_f^{(k)}(\mu) = {\rm Span}_R \left( (k-1)!\frac{\v x^{\v u}}{f(\v x)^k} \right)_{\v u\in (k \mu)_\Z}, \quad k \ge 1.
\]
On de Rham cohomology this filtration corresponds to the Hodge filtration, see Remark~\ref{geometric-remark}.
The main result of~\cite{DCIII} describes a splitting filtration in arithmetic terms: 

\begin{theorem}\label{DCIII-main} Let $R$ be $p$-adically complete, $\mu \subseteq \Delta$ be an \emph{open} set and $1 \le k < p$. If the \emph{$k$th Hasse--Witt condition} holds for $\mu$, then there is a direct sum decomposition of $R$-modules
\[
\hat\Omega_f(\mu) = \Omega_f^{(k)}(\mu) \oplus \sF_k, 
\]
where 
\[
\sF_k = \hat\Omega_f(\mu) \cap d^k\Omega_{formal} = \{ \omega \in \hat\Omega_f(\mu) : \cartier^s(\omega) \in p^{ks} \hat\Omega_{f^{\sigma^s}}(\mu) \quad \forall s \ge 1\}
\] is the submodule of $k$th formal derivatives. Secondly, one has
\[
\cartier(\Omega_f(\mu)) \subset \Omega^{(k)}_{f^\sigma}(\mu) + p^k \sF_k^\sigma.
\]  
\end{theorem}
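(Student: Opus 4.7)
The plan is to mimic the proof of Theorem~\ref{DCI-main} with the filtration level $1$ replaced by $k$ and the modulus $p$ replaced by $p^k$. I would first establish a \emph{higher contraction property} for the Cartier operator, namely
\[
\cartier(\hat\Omega_f(\mu)) \subset \Omega^{(k)}_{f^\sigma}(\mu) + p^k \hat\Omega_{f^\sigma}(\mu).
\]
This is the natural generalization of Proposition~\ref{Cartier-mod-p}, and it would follow from a careful bookkeeping of formula~\eqref{cartier-formula}: the $r$th summand has pole order $r+\lceil m/p\rceil$ in $f^\sigma$ and coefficient $\frac{p^r}{r!}\frac{(m-1)!}{(\lceil m/p\rceil-1)!}(\lceil m/p\rceil+r-1)!$. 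Terms with $r+\lceil m/p\rceil\le k$ lie in $\Omega^{(k)}_{f^\sigma}(\mu)$; for the remaining terms, Legendre's formula applied to the factorial ratio $(m-1)!/(\lceil m/p\rceil-1)!$ combined with the divisibility of $p^r/r!$ gives at least $p^k$-divisibility, provided $k<p$ (so that the small factorials $(k-1)!$ and $r!$ for $r<k$ do not absorb $p$'s).

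Next, I would formulate and prove a $p^k$-analog of Proposition~\ref{key-contraction-lemma}: under the hypotheses $\phi_i(M_{i-1})\subset N_i+p^kM_i$, $N_i\cap p^kM_i=p^kN_i$, and the induced map $\phi_i\colon N_{i-1}/p^kN_{i-1}\to N_i/p^kN_i$ is an isomorphism, the submodules
\[
U_i=\{\omega\in M_i:\phi_{i+s}\circ\cdots\circ\phi_{i+1}(\omega)\equiv 0\mod p^{ks}M_{i+s}\text{ for all }s\ge 1\}
\]
satisfy $M_i=N_i\oplus U_i$ and $\phi_i(U_{i-1})\subset p^kU_i$. The argument is a verbatim repetition of the proof of Proposition~\ref{key-contraction-lemma} with $p$ replaced by $p^k$ throughout; the hypothesis $\cap_s p^sM_i=\{0\}$ still guarantees $\cap_s p^{ks}M_i=\{0\}$. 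The $k$th Hasse--Witt condition (invertibility of the natural Cartier matrix on $\Omega_f^{(k)}(\mu)$ modulo $p^k$) is precisely the input needed for the isomorphism hypothesis on $N_{i-1}/p^kN_{i-1}\to N_i/p^kN_i$.

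I would then apply this lemma to $M_i=\hat\Omega_{f^{\sigma^i}}(\mu)$, $N_i=\Omega^{(k)}_{f^{\sigma^i}}(\mu)$, and $\phi_i=\cartier$. Torsion-freeness $N_i\cap p^kM_i=p^kN_i$ follows from the explicit $R$-basis of $\Omega^{(k)}_{f^{\sigma^i}}(\mu)$ indexed by $(k\mu)_\Z$. By Lemma~\ref{Katz-lemma-k}(ii), the submodule $U_0$ coincides with $\hat\Omega_f(\mu)\cap d^k\Omega_{\mathrm{formal}}=\sF_k$, giving the equivalent characterizations of $\sF_k$ stated in the theorem. Parts (i) and (iv) of the contraction lemma yield the direct sum decomposition $\hat\Omega_f(\mu)=\Omega_f^{(k)}(\mu)\oplus\sF_k$, and part (iii) refines the higher contraction property to $\cartier(\Omega_f(\mu))\subset\Omega_{f^\sigma}^{(k)}(\mu)+p^k\sF_k^\sigma$.

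The main obstacle will be the first step: establishing $p^k$-contraction for $\cartier$. The coefficients in~\eqref{cartier-formula} mix growing and shrinking factorial factors, so one must be careful to track precisely when $p$-divisibility can be lost to the $1/r!$ and $1/(\lceil m/p\rceil-1)!$ denominators. The restriction $k<p$ is essential here and likely cannot be relaxed without modifying the construction; it is the analog of the restriction $p>2$ that already appears in the proof of the $k=1$ case through estimate~\eqref{cartier-entries-estimate}.
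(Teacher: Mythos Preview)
Your approach has a genuine gap at the isomorphism hypothesis. You assert that the $k$th Hasse--Witt condition means the Cartier matrix on $\Omega_f^{(k)}(\mu)$ is invertible modulo $p^k$, and that this supplies the isomorphism $\cartier\colon N_{i-1}/p^kN_{i-1}\to N_i/p^kN_i$. But this is false for $k>1$: by Lemma~\ref{cartier-vs-HW-lemma}(ii) one always has $\det(HW^{(k)}(\mu))\in p^{L(k,\mu)}R$ with $L(k,\mu)=\sum_{\ell=1}^k(\ell-1)(m_\ell-m_{\ell-1})$, and this exponent is strictly positive as soon as $(2\mu)_\Z\supsetneq\mu_\Z$. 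The actual $k$th Hasse--Witt condition is that $\det(HW^{(\ell)}(\mu))\in p^{L(\ell,\mu)}R^\times$ for each $\ell\le k$: the determinant is a unit \emph{times} $p^{L(\ell,\mu)}$, not a unit modulo $p^k$. Concretely, in an extended basis the matrix of $\cartier$ on $\Omega_f^{(k)}(\mu)$ modulo $p^k$ is block-upper-triangular with diagonal blocks $C_0,\,pC_1,\ldots,p^{k-1}C_{k-1}$, so it is never invertible even modulo $p$ once $k\ge 2$ and the filtration is nontrivial.

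This obstruction is precisely why the paper proceeds by induction on $k$. Assuming the decompositions $\hat\Omega_f=\Omega_f^{(\ell)}\oplus\sF_\ell$ for all $\ell<k$, it applies the \emph{unmodified} mod-$p$ contraction lemma (Proposition~\ref{key-contraction-lemma}) not to $M_i=\hat\Omega_{f^{\sigma^i}}(\mu)$ but to $M_i=\sF_{k-1}^{\sigma^i}$, $N_i=\sF_{k-1}^{\sigma^i}\cap\Omega_{f^{\sigma^i}}^{(k)}$, and $\phi_i=p^{1-k}\cartier$. Dividing by $p^{k-1}$ and restricting to $\sF_{k-1}$ isolates the single diagonal block $C_{k-1}$, whose invertibility mod $p$ is exactly what the $k$th Hasse--Witt condition adds beyond the $(k-1)$st. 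The resulting splitting $\sF_{k-1}=(\sF_{k-1}\cap\Omega_f^{(k)})\oplus\sF_k$ then combines with the induction hypothesis to give $\hat\Omega_f\cong\Omega_f^{(k-1)}\oplus(\Omega_f^{(k)}\cap\sF_{k-1})\oplus\sF_k\cong\Omega_f^{(k)}\oplus\sF_k$. Your first step, the $p^k$-contraction $\cartier(\hat\Omega_f(\mu))\subset\Omega_{f^\sigma}^{(k)}(\mu)+p^k\hat\Omega_{f^\sigma}(\mu)$, is correct and is used in the paper; but a one-shot application of a $p^k$-contraction lemma to the full module cannot work.
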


\bigskip

This theorem will be proved in the next section. Let us recall formula~\eqref{cartier-formula} which describes explicitly the Cartier action $\cartier: \hat\Omega_f \to \hat\Omega_{f^\sigma}$:
\be{cartier-formula-again}
\cartier\left((m-1)! \frac{\v x^\v u }{f(\v x)^m}\right)=\sum_{r\ge0}\frac{p^r}{r!}
\frac{(m-1)!}{(\lceil m/p\rceil-1)!}
(\lceil m/p\rceil+r-1)!\frac{Q_r(\v x)}{f^\sigma(\v x)^{r+\lceil m/p\rceil}},
\ee
where the $Q_r(\v x)=\cartier(G(\v x)^r\v x^{\v u}f(\v x)^{p\lceil m/p\rceil-m})$ are
Laurent polynomials in $x_1,\ldots,x_n$ with support in $(\lceil m/p\rceil+r)\Delta$ and coefficients in $R$.  
Polynomial $G(\v x)$ was defined as $(f^\sigma(\v x^p)-f(\v x)^p)/p$, it is supported in $p \Delta$ and has coefficients in $R$. We observe that formula~\eqref{cartier-formula-again} implies that 
\[
\cartier(\Omega_f(\mu)) \subset \Omega_{f^\sigma}^{(k)}(\mu) + p^k \hat\Omega_{f^\sigma}(\mu).
\]
The $k$th Hasse-Witt condition means that the image of the Cartier operator modulo $p^k$ has maximal possible rank. We shall now explain how to verify this condition in practice.
 
\begin{definition}\label{higherHasseWitt} Assume that $1 \le k < p$. Denote
\[
F^{(k)}(\v x) = f(\v x)^{p-k}\sum_{r=0}^{k-1}\left(f^\sigma(\v x^p)-f(\v x)^p\right)^r
f^{\sigma}(\v x^p)^{k-1-r}.
\]
The $k$-th Hasse-Witt matrix $HW^{(k)}$ is the matrix indexed by the set $(k \Delta)_\Z = (k \Delta) \cap \Z^n$ with entries given by 
\[
HW^{(k)}_{\v u,\v v}=\mbox{coefficient of $\v x^{p\v v-\v u}$ in } F^{(k)}(\v x).
\]
For an \emph{open} set $\mu \subseteq \Delta$ we denote by $HW^{(k)}(\mu)$ the submatrix indexed by $(k \mu)_\Z$.  
\end{definition}

\begin{exercise}
Check that 
\[HW^{(k)}_{\v u,\v v}\is\text{coefficient of $\v x^{p\v v-\v u}$ in }
\frac{f^\sigma(\v x^p)^k}{f(\v x)^k}\mod{p^k},
\]
where on the right one has to consider the Laurent series expansion in order to determine the coefficient.
\end{exercise}

\begin{lemma}\label{cartier-vs-HW-lemma} Let $\mu \subset \Delta$ be an \emph{open} set. \begin{itemize}
\item[(i)] For $\v u \in (k\mu)_\Z$ one has \[
\cartier\left(\frac{\v x^\v u}{f(\v x)^k}\right) \is \sum_{\v v \in (k \mu)_\Z} HW^{(k)}_{\v u,\v v} \frac{\v x^\v v}{f^\sigma(\v x)^k} \quad \mod {p^k \, \hat \Omega_{f^\sigma}(\mu)} 
\]

\item[(ii)] Let us denote $m_{\ell}=\# (\ell \mu)_\Z$ for $\ell \ge 1$ and
\[
L(k,\mu) = \sum_{\ell = 1}^k (\ell-1) (m_{\ell}-m_{\ell-1}).
\] 
Then  $\det \left(HW^{(k)}(\mu)\right) \in p^{L(k,\mu)} R$.
\end{itemize}
\end{lemma}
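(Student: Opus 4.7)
The plan is to derive part~(i) by specializing the explicit Cartier formula~\eqref{cartier-formula-again} and to derive part~(ii) from a rank bound on $HW^{(k)}(\mu)$ modulo each $p^j$ that comes out of the pole-order filtration. For part~(i), I would apply~\eqref{cartier-formula-again} with $m=k$ and use $\lceil k/p\rceil = 1$ (since $k<p$) to collapse all the factorials, obtaining
\[
\cartier\!\left(\frac{\v x^\v u}{f(\v x)^k}\right) \;=\; \sum_{r\ge 0}\frac{p^r Q_r(\v x)}{f^\sigma(\v x)^{r+1}},
\qquad Q_r = \cartier\!\bigl(G(\v x)^r\v x^\v u f(\v x)^{p-k}\bigr).
\]
Terms with $r\ge k$ already belong to $p^k\widehat\Omega_{f^\sigma}$. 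For $r = 0,\dots,k-1$ I would place all summands over the common denominator $f^\sigma(\v x)^k$ and pull $f^\sigma(\v x)^{k-1-r}$ inside $\cartier$ as $f^\sigma(\v x^p)^{k-1-r}$ via the identity $\cartier(h(\v x)\,F(\v x^p)) = F(\v x)\,\cartier(h(\v x))$. Combined with $pG(\v x) = f^\sigma(\v x^p)-f(\v x)^p$, the sum telescopes exactly to $\cartier(\v x^\v u F^{(k)}(\v x))/f^\sigma(\v x)^k$. The coefficient of $\v x^\v v$ inside $\cartier(\v x^\v u F^{(k)}(\v x))$ is $HW^{(k)}_{\v u,\v v}$ by definition, and Theorem~\ref{cartier-props}(ii) lets me restrict the sum to $\v v\in(k\mu)_\Z$ modulo $p^k\widehat\Omega_{f^\sigma}(\mu)$.

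For part~(ii), the key observation is that the same derivation works verbatim with $m=j$ in place of $k$ for any $1\le j\le k$, so that $\cartier(\Omega_f(\mu))\subset\Omega_{f^\sigma}^{(j)}(\mu) + p^j\widehat\Omega_{f^\sigma}(\mu)$ for every such $j$. Writing $HW^{(k)}(\mu)$ in the basis $\{\v x^\v v/(f^\sigma)^k\}_{\v v\in(k\mu)_\Z}$ and rewriting $\v x^\v v/(f^\sigma)^j = \v x^\v v\,(f^\sigma)^{k-j}/(f^\sigma)^k$ to express $\Omega_{f^\sigma}^{(j)}(\mu)$ as a rank-$m_j$ submodule of $\Omega_{f^\sigma}^{(k)}(\mu)$ yields a factorization $HW^{(k)}(\mu) \equiv T_j A_j \pmod{p^j}$ in which $T_j$ is $m_k\times m_j$. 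Hence $HW^{(k)}(\mu)$ has rank at most $m_j$ modulo $p^j$ for each $j=1,\dots,k-1$. By the Smith normal form over $\Z_p$, the elementary divisors $p^{e_1}\,|\,\cdots\,|\,p^{e_{m_k}}$ of $HW^{(k)}(\mu)$ then satisfy $\#\{i:e_i\ge j\}\ge m_k-m_j$, and therefore
\[
v_p\bigl(\det HW^{(k)}(\mu)\bigr) \;=\; \sum_{i=1}^{m_k} e_i \;=\; \sum_{j\ge 1}\#\{i:e_i\ge j\} \;\ge\; \sum_{j=1}^{k-1}(m_k - m_j),
\]
and a direct rearrangement shows $\sum_{j=1}^{k-1}(m_k-m_j) = L(k,\mu)$.

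The main obstacle will be the combined use of the rank bounds at different precisions $j$. Individually, a single decomposition $HW^{(k)}(\mu) \equiv T_j A_j\pmod{p^j}$ and a Cauchy--Binet expansion of the determinant only yield $v_p(\det)\ge j(m_k-m_j)$, which is weaker than $L(k,\mu)$; stacking the estimates additively requires elementary divisor theory, which applies cleanly over $\Z_p$ but needs a Fitting-ideal argument, or a reduction to the universal (DVR) case, in order to be valid over a general $p$-adically complete ring $R$. This passage from the filtration-level rank bounds to the sharp $p$-adic valuation of the determinant is where the bulk of the technical work should lie.
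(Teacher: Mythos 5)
Your proof of part~(i) follows the same route the paper intends (specializing formula~\eqref{cartier-formula-again} with $m=k$, $\lceil k/p\rceil=1$), and you carry out the telescoping that the paper leaves as an exercise; the identity $\cartier(h(\v x)\,F(\v x^p))=F(\v x)\,\cartier(h(\v x))$ combined with $pG=f^\sigma(\v x^p)-f(\v x)^p$ does indeed collapse the sum over $r=0,\dots,k-1$ to $\cartier(\v x^\v u F^{(k)}(\v x))/f^\sigma(\v x)^k$, and restriction of the sum to $(k\mu)_\Z$ is exactly Theorem~\ref{cartier-props}(ii). That part is fine.

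For part~(ii) your approach is genuinely different from the paper's, and it contains a real gap that you yourself flag. Your starting observation is correct: the same computation with $m=j\le k$ shows $\cartier(\Omega_f^{(k)}(\mu))\subset\Omega_{f^\sigma}^{(j)}(\mu)+p^j\widehat\Omega_{f^\sigma}(\mu)$ for each $1\le j\le k$, and the identity $\sum_{j=1}^{k-1}(m_k-m_j)=L(k,\mu)$ is a correct rearrangement. The problem is that you then pass from ``rank $\le m_j$ modulo $p^j$'' to divisibility of $\det$ via Smith normal form and elementary divisors, which requires $R$ to be a discrete valuation ring. In the lemma, $R$ is only assumed $p$-adically complete with $\cap p^s R=\{0\}$; the motivating examples such as the $p$-adic completion of $\Z[t,\gamma_p(t)^{-1}]$ are not DVRs (they are not even local in general), there is no Smith normal form, and ``the $p$-adic valuation of $\det$'' is not a well-defined quantity. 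A single rank bound with Cauchy--Binet only gives $p^{j(m_k-m_j)}\mid\det$, as you note, and there is no elementary way to stack these additively over a general $R$. This is not a routine detail to fill in: the passage is precisely what has to be replaced.

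The paper avoids the issue by never taking ranks or valuations. It chooses an \emph{extended basis} $\omega_1,\dots,\omega_{m_k}$ of $\Omega_f^{(k)}(\mu)$ adapted to the pole-order filtration (so $\omega_1,\dots,\omega_{m_\ell}$ span $\Omega_f^{(\ell)}(\mu)$ for each $\ell\le k$), writes the Cartier action modulo $p^k$ in this basis as a matrix $C$, and uses the inclusion $\cartier(\Omega_f^{(k)}(\mu))\subset\sum_{\ell}p^{\ell-1}\Omega_{f^\sigma}^{(\ell)}(\mu)\pmod{p^k}$ to conclude that the $j$-th column of $C$ is divisible by $p^{\ell-1}$ whenever $m_{\ell-1}<j\le m_\ell$. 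Expanding $\det(C)$ by columns then gives $\det(C)\in p^{L(k,\mu)}R$ directly, with no reference to ranks, elementary divisors, or valuations. Since $C\equiv A^{-1}HW^{(k)}(\mu)A^\sigma\pmod{p^k}$ with $\det(A)\in R^\times$, and $L(k,\mu)$ only involves exponents $\ell-1\le k-1<k$ per column so the column divisibility survives the $\pmod{p^k}$ correction, the same divisibility holds for $\det(HW^{(k)}(\mu))$. If you want to salvage your route, you should replace the Smith-normal-form step with this adapted-basis bookkeeping; that is essentially what the paper does, and it buys validity over arbitrary $p$-adically complete $R$.
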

\begin{proof} (i) follows from formula~\eqref{cartier-formula-again}. We leave the check as an exercise.
For (ii) we choose a basis $\omega_1,\ldots,\omega_{m_k}$ in the free $R$-module $\Omega^{(k)}_f(\mu)$ so that $\omega_1,\ldots,\omega_{m_\ell}$ is a basis in $\Omega^{(\ell)}_f(\mu)$ for every $\ell \le k$. This is called an \emph{extended basis}. Under our standard assumption that there is a vertex $\v b$ in $\Delta$ such that the coefficient of $f$ at $\v x^\v b$ is in $R^\times$, one may check that  
\[
\frac{\v x^{\v u}}{f(\v x)^\ell}, \v u \in (\ell \mu)_\Z \setminus ((\ell-1) \mu)_\Z, \quad 1 \le \ell \le k
\]   
is an extended basis in $\Omega_f^{(k)}(\mu)$.

We write the action of the Cartier operator on $\Omega_f^{(k)}(\mu)$ modulo $p^k$ with respect to an extended basis. That is, we choose some $C_{ij} \in R$ such that $\cartier(\omega_i) = \sum_j C_{ij} \omega_j^\sigma \mod {p^k}$.  Formula~\eqref{cartier-formula-again} shows that $\cartier$ maps $\Omega_f^{(k)}(\mu)$ to $\sum_{l=1}^{k}p^{l-1}\Omega_{f^\sigma}^{(l)}(\mu)$, and hence $C_{ij}$ is divisible by $p^{l-1}$ when 
$m_{l-1}<j\le m_\ell$. We conclude that $\det(C) \in p^{L(k,\mu)}R$. 

Let $\tilde\omega_{\v u} = \v x^{\v u}/f(\v x)^k$, $\v u \in (k \mu)_{\Z}$ be the monomial basis in $\Omega_f^{(k)}(\mu)$. From part~(i) we know that $HW^{(k)}(\mu)$ describes the action of the Cartier operator modulo $p^k$ in this monomial basis. Let $A$ be the transition matrix, that is $\tilde\omega_{\v u} = \sum_i A_{\v u,i} \omega_i$. Then 
\[
C \is A^{-1} \cdot HW^{(k)} \cdot A^\sigma \qquad \mod {p^k}.
\] 
It follows that $\det(A)^{-1} \det(HW^{(k)}(\mu)) \det(A)^\sigma \in p^{L(k,\mu)}R$. Since $R$ is $p$-adically complete, invertibility $\det(A) \in R^\times$ implies that $\det(A)^\sigma \in R^\times$. It follows that 
\[
\det(HW^{(k)}(\mu)) \in p^{L(k,\mu)}R.
\]
\end{proof}

\begin{definition} We say that the $k$th Hasse-Witt condition holds for an \emph{open} subset $\mu \subseteq \Delta$ when
\be{cartier-image-mod-p-k} 
\det(HW^{(\ell)}(\mu)) \in p^{L(\ell, \mu)}R^{\times}, \quad 1 \le \ell \le k.
\ee
\end{definition}

In~\cite[\S 5]{DCIII} it is explained that this condition means maximality of the Cartier image modulo $p^k$.

\subsection{Proof of the main theorem} In this section we will prove Theorem~\ref{DCIII-main}. The proof was given in~\cite[\S 4-5]{DCIII} in a more general situation of \emph{Dwork crystals}. Here we will give a somewhat simplified version, which works for modules $\Omega_f(\mu)$ and exploits our key contraction principle stated in Proposition~\ref{key-contraction-lemma}.   

To shorten the notation we shall drop $\mu$ from $\Omega_f(\mu)$ and $L(\ell,\mu)$ for the duration of this proof. Similarly, $\Omega_f^{(k)}$ will stand for $\Omega_f^{(k)}(\mu)$. We thus need to show that the $k$th Hasse-Witt condition implies that $\hat\Omega_f = \Omega_f^{(k)} \oplus \fil_k$ and $\cartier(\hat\Omega_f) \subset \Omega_{f^\sigma}^{(k)} + p^k \fil_k^\sigma$, where $\fil_k^\sigma$ is the submodule of $k$th formal derivatives in $\hat\Omega_{f^\sigma}$. 
 
We will use induction on $k$. The case $k=1$ is already handled in Theorem~\ref{DCI-main}. Suppose now that $k>1$ and 
\be{induction-assum}
\hat\Omega_f\cong\Omega_f^{(l)}\oplus \fil_{l}\quad\mbox{for all $l<k$}. 
\ee
We will show that 
\[
\fil_{k-1}\cong(\Omega_f^{(k)}\cap\fil_{k-1})\oplus \fil_k,
\]
and hence
\[
\hat\Omega_f\cong\Omega_f^{(k-1)}\oplus(\Omega_f^{(k)}\cap\fil_{k-1})\oplus \fil_k
\cong\Omega_f^{(k)}\oplus\fil_k.
\]
The last equality is a consequence of \eqref{induction-assum} with $l=k-1$
restricted to $\Omega_f^{(k)}$.  Our convention is that $\fil_0=\hat\Omega_f$.

We would like to apply Proposition~\ref{key-contraction-lemma} with 
\[
M_i = \fil_{k-1}^{\sigma^i}, \; \phi_i = p^{1-k}\cartier \text{ and } 
N_i = M_i\cap\Omega_{f^{\sigma^i}}^{(k)}.
\]
We shall check that the assumptions of Proposition~\ref{key-contraction-lemma} are satisfied. It is clear from the definition of modules $\fil_{\ell}$ that $\cartier(\fil_{\ell}) \subset p^{\ell} \fil_{\ell}^\sigma$ for any $\ell$. Therefore $\phi_i$ maps $M_{i-1}$ to $M_i$. 
Let $\omega\in\fil_{k-1}$. We know that $\cartier(\omega)\in p^{k-1}\fil^\sigma_{k-1}$.
Since $\cartier$ maps $\hat\Omega_f$ modulo $p^k$ to $\Omega_{f^\sigma}^{(k)}$ we can conclude
that $\cartier(\omega) = p^{k-1} \omega_1 + p^k \omega_2$ with 
$\omega_1 \in \fil^\sigma_{k-1}\cap\Omega_{f^\sigma}^{(k)}$, 
$\omega_2 \in \hat\Omega_{f^\sigma}$.
Using~\eqref{induction-assum} for $f^\sigma$ we write $\omega_2 = \omega_2' + \nu_2$
with $\nu_2 \in \fil^{\sigma}_{k-1}$ and $\omega_2' \in \Omega_{f^\sigma}^{(k-1)}\subset
\Omega_{f^\sigma}^{(k)}$.
We get $\cartier(\omega) = p^{k-1}\nu_1 + p^k \nu_2$ with $\nu_1 = \omega_1+p \omega_2'
\in \Omega_{f^\sigma}^{(k)}$. Note that $\nu_1 = p^{1-k}\cartier(\omega)-p\nu_2 
\in \fil_{k-1}^\sigma$.
We thus proved that
\[
\cartier(\fil_{k-1}) \subset p^{k-1}(\fil_{k-1}^\sigma\cap\Omega_{f^\sigma}^{(k)}) + 
p^k \fil_{k-1}^\sigma.
\]
Replacing $f$ with $f^{\sigma^i}$ and dividing by $p^{k-1}$, we then obtain 
\be{assum-1}
\phi_i (M_{i-1}) \subset N_i + p M_i.
\ee

Next, we need to check that 
\be{assum-2} 
N_i \cap pM_i = pN_i.
\ee 
We will give the argument in the case $i=0$; other cases will follow by replacing 
$f$ with $f^{\sigma^i}$. Recall that $N_0\cap pM_0=\fil_{k-1}\cap\Omega_f^{(k)}\cap p\fil_{k-1}
=\Omega_f^{(k)}\cap p\fil_{k-1}$. One easily checks that this equals 
$p(\Omega_f^{(k)}\cap\fil_{k-1})=pN_0$.

Note that by~\eqref{assum-1} and~\eqref{assum-2} we have induced maps
\be{assum-3-maps}
\overline \phi_i:N_{i-1}/pN_{i-1} \to N_i/p N_{i}.
\ee
To apply Proposition~\ref{key-contraction-lemma}, it remains to check that~\eqref{assum-3-maps} are isomorphisms. We shall restrict to the case $i=1$, the other cases being similar.

Let us denote $m_{\ell}=\# (\ell \mu)_\Z$. Let $\omega_1,\ldots,\omega_{m_k}$ be a basis of $\Omega_f^{(k)}$ such that $\omega_1,\ldots,\omega_{m_\ell}$ is a basis in $\Omega^{(\ell)}_f(\mu)$ for every $\ell \le k$. This is called an \emph{extended basis}. Its existence is a mild assumption (see~\cite[\S 5]{DCIII}). For example, if there is a vertex $\v b$ in $\Delta$ such that the coefficient of $f$ at $\v x^\v b$ is in $R^\times$, then an extended basis exists (the reader can find in the proof of Lemma~\ref{cartier-vs-HW-lemma}). 

For $l=k-1$ down to $l=1$ and $r=m_\ell+1,\ldots,m_{\ell+1}$ we choose
$\eta_r\in\Omega_f^{(l)}$ such that $\omega_r-\eta_r\in\fil_l$. This is possible because
of~\eqref{induction-assum}. Then redefine $\omega_r:=\omega_r-\eta_r$. We now have a new extended basis of $\Omega_f^{(k)}$ such that 
\[
\omega_r\in\fil_l\cap\Omega_f^{(l+1)} \text{ whenever } m_\ell < r \le m_{\ell+1}.
\]
In $\Omega_{f^\sigma}^{(k)}$ we can choose
a similar basis $\omega_i^\sigma,i=1,\ldots,m_k$. 

Let $C=(c_{i,j})$ be a matrix of $\cartier$ modulo $p^k$ in these respective bases. That is, we choose some $\lambda_{ij} \in R$ such that
\[
\cartier (\omega_i) = \sum_{j=1}^{m_k} c_{ij} \, \omega^\sigma_{j}  \quad
\mod {p^k \hat\Omega_{f^\sigma}}
\] 
for every $1 \le i \le m_k$. Since $\cartier$ is divisible by $p^\ell$ on $\fil_\ell$, we have $p^{l}|c_{ij}$ when $m_\ell < i \le m_{\ell+1}$. {\bf Exercise}: check that decompositions~\eqref{induction-assum} imply that $c_{ij} \is 0 \mod {p^k}$ when $j < m_\ell \le i$ for some $\ell$. Thus w.l.o.g. we may assume that $C$ is a block-upper-triangular matrix; we will denote its diagonal blocks by $p^{\ell} C_\ell$, $0 \le \ell \le k-1$:
\[
C = \begin{pmatrix} C_0 & * & * & \ldots \\ 0 & p C_1  & p * &  \ldots \\
&&& \\
0 & 0 & \ldots & p^{k-1} C_{k-1} \end{pmatrix}.
\] 
Note that $C_{k-1} \mod p$ is the matrix of the map
\[
p^{1-k} \cartier: \fil_{k-1}\cap\Omega_f^{(k)}\mod{p}\to 
\fil_{k-1}^\sigma\cap\Omega_{f^\sigma}^{(k)}\mod{p}.
\]  
in the bases $\omega_i,\omega^\sigma_i,m_{k-1}< i \le m_k$. This map is precisely~\eqref{assum-3-maps} for $i=1$. We can now conclude that this map is invertible if and only if 
\[
\det(C_{k-1}) \in R^\times.
\] 

Recall that $HW^{(k)}$ is the matrix of $\cartier \mod {p^k}$ in the monomial basis $\tilde\omega_{\v u}=\v x^{\v u}/f(\v x)^k$, $\v u \in (k \mu)_\Z$ (see Lemma~\ref{cartier-vs-HW-lemma}(i)). Let $A$ be the transition matrix, that is $\tilde\omega_{\v u} = \sum_i A_{\v u,i} \omega_i$. Then 
\be{extended-vs-monomial-HW}
C \is A^{-1} \cdot HW^{(k)} \cdot A^\sigma \qquad \mod {p^k}.
\ee
It follows that $\det(A)^{-1} \det(HW^{(k)}(\mu)) \det(A)^\sigma \in p^{L(k,\sigma)}R$. Since $R$ is $p$-adically complete, invertibility $\det(A) \in R^\times$ implies that $\det(A)^\sigma \in R^\times$. It follows that $\det(HW^{(k)}(\mu)) \in p^{L(k,\sigma)}R$.

Let $\delta$ be the diagonal matrix of size $m_k$ whose $j$-th entry equals
$p^{l-1}$ where $m_{\ell-1}<j\le m_{\ell}$. Since the $j$-th row in~\eqref{extended-vs-monomial-HW} is 
divisible by $p^{l}$, we can multiply~\eqref{extended-vs-monomial-HW} by the matrix $\delta^{-1}$ on the left retaining a congruence mod $p$ between matrices with entries in $R$. Note that $\det \delta = p^{L(k)}$. We conclude that
\[
\prod_{\ell = 0}^{k-1} \det (C_{\ell}) \is p^{-L(k)} \det(A)^{-1} \det(HW^{(k)}) \det(A^{\sigma}) \qquad \mod p. 
\]
The Hasse--Witt condition yields $p^{-L(k)}\det(HW^{(k)})\in R^\times$. Since $R$ is $p$-adically complete, $\det(A) \in R^{\times}$ implies that $\det(A)^\sigma \in R^{\times}$. Hence $\det( C_{k-1}) \in R^\times$,
which concludes the proof of invertibility of~\eqref{assum-3-maps} when $i=1$. 
For general $i$ we use the same argument with $f$ substituted by $f^{\sigma^{i-1}}$. 

We checked that the assumptions of Proposition~\ref{key-contraction-lemma} are satisfied. 
From parts (i) and (iv) of that proposition we conclude that $M_0 = N_0 \oplus U_0$, where

\[\bal
U_0 &= \{ \omega \in M_0 | \phi_{s} \circ \ldots \circ \phi_1 (\omega) \in p^s M_s 
\text{ for all } s \ge 1 \} \\
&= \{ \omega \in \fil_{k-1} | \cartier^s(\omega) \in p^{ks} \fil_{k-1}^{\sigma^s} 
\text{ for all } s \ge 1 \} = \fil_{k}.
\eal\]
We proved that $ \fil_{k-1}\cong (\fil_{k-1}\cap\Omega_f^{(k)}) \oplus \fil_{k}$.
Using the argument from the beginning of the induction step, the first claim of our theorem follows.

Our second claim follows from~\eqref{cartier-image-mod-p-k} 
and the decomposition that we already proved:
\[\bal
\cartier \left( \Omega_f\right) \subset \Omega_{f^\sigma}^{(k)} + p^{k} \hat \Omega_{f^\sigma}
&= \Omega_{f^\sigma}^{(k)} + p^{k} \left( \Omega_{f^\sigma}^{(k)} + \fil_{k}^\sigma\right) \\
&= \Omega_{f^\sigma}^{(k)} + p^k \fil_{k}^\sigma.
\eal\]
This finishes the proof of the theorem.

\subsection{$p$-adic interpolation of higher Cartier matrices}
In \S\ref{sec:cartier-interpolation-on-unit-root} we explained interpolation of Cartier matrices on the unit-root quotients $\Omega_f(\mu)/\fil_1$ via expansion coefficients, the principle which goes back to Nick Katz. This can be done for the quotients by higher derivatives in a similar vein. Let $\omega_1,\ldots,\omega_{\#(k \mu)_\Z}$ be a basis in the free $R$-module $\Omega^{(k)}_f(\mu)$. If the $k$th Hasse--Witt condition holds, then by Theorem~\ref{DCIII-main} there is a unique matrix $\Lambda = (\lambda_{ij})$ with entries in $R$ such that 
\be{cartier-matrix-k}
\cartier(\omega_i) \is \sum_{j=1}^{\# (k \mu)_\Z} \lambda_{ij} \, \omega_j^\sigma \qquad \mod{ p^k \fil_k^\sigma}.
\ee
Consider any procedure of formal expansion $\omega = \sum_{\v u \in \Z^n} c_\v u(\omega) \v x^\v u$ for $\omega \in \Omega_f$. This can be expansion with respect to a vertex $\v b$ of $\Delta$ as in Section~\ref{sec:formal-exp}, or expansion at the origin $\v 0$ as in Example~\ref{period0example}. For any vector $\v u \in \Z^n$, taking expansion coefficients at $p^{s-1} \v u$ in~\eqref{cartier-matrix-k} yields
\[
c_{p^{s}\v u}(\omega_i) \is \sum_{j=1}^{\# (k \mu)_\Z} \lambda_{ij} \, c_{p^{s-1} \v u}(\omega_j^\sigma) \; \mod {p^{s k}}, \quad s \ge 1. 
\]  
In practice one can often \emph{solve systems of such $p$-adic congruences} and obtain explicit expressions for $\Lambda$ in terms of expansion coefficients. In the following sections we will overview several applications in which explicit information about entries of $\Lambda$ appeared to be useful.

\subsection{Supercongruences and excellent Frobenius lifts}\label{sec:exc-lifts}

Let us start with a simple example, in which one can explicitly compute the action of $\cartier$ modulo $\fil_2$ using the method of the previous section. This example will help us to illustrate the phenomenon in the name of this section.   

\begin{example} Let $f(\v x) = (1-x_1)(1-x_2)- t x_1 x_2$ and
\[
R = \text{$p$-adic completion of } \; \Z[t,1/t].
\]
Frobenius lifts $\sigma: R \to R$ are given by $t \mapsto t^\sigma \in t^p + pR$. In \cite[\S 6]{DCIII} we show that for any Frobenius lift one has
\be{simple-example}\bal
\cartier\left(\frac{1}{f(\v x)} \right) &= \frac{1}{f^\sigma(\v x)} \mod {p \fil_1^\sigma}, \\
\cartier\left(\frac{1}{f(\v x)} \right) &= \frac{1}{f^\sigma(\v x)} + \log\left(\frac{t^\sigma}{t^p} \right)\left( \theta \frac{1}{f(\v x)} \right)^\sigma \mod {p^2 \fil_2^\sigma}, \\
\eal\ee
where $\theta= t \frac {d}{dt}$. We see that in the special case $t^\sigma = t^p$ one has 
\be{simple-example-exc-lift}
\cartier\left(\frac{1}{f(\v x)} \right) = \frac{1}{f^\sigma(\v x)} \quad \mod {p \fil_2^\sigma}.
\ee For this special Frobenius lift $1/f(\v x)$ becomes an `eigenvector' of the Cartier operator modulo $\fil_2$, while in general it is only an 'eigenvector' modulo $\fil_1$. Existence of such lifts was observed by Bernard Dwork who called them \emph{excellent Frobenius lifts}.
\end{example}

In the above example, let us expand $1/f$ at the vertex $\v 0 \in \Delta$:
\[\bal
\frac1{f(\v x)} &= \sum_{m \ge 0} \frac{(t x_1 x_2)^m}{(1-x_1)^{m+1}(1-x_2)^{m+1}} = \sum_{m,u_1,u_2 \ge 0} t^m \binom{u_1}{m}\binom{u_2}{m} x_1^{u_1} x_2^{u_2}\\
&=  \sum_{\v u \in \Z_{\ge 0}^2} c_{\v u}(t) \, \v x^\v u, \qquad c_{\v u}(t) = \sum_{m=0}^{\min(u_1,u_2)} \binom{u_1}{m} \binom{u_2}{m} t^m
\eal\]
The first line in~\eqref{simple-example} means that for any Frobenius lift $\sigma: R \to R$
\[
c_{u_1 p^s, u_2 p^s}(t) \is c_{u_1 p^{s-1}, u_2 p^{s-1}}(t^\sigma) \quad \mod {p^s}, \quad s \ge 1.
\]
However for the excellent Frobenius lift $t^\sigma=t^p$ we have~\eqref{simple-example-exc-lift} and the modulus improves:
\[
c_{u_1 p^s, u_2 p^s}(t) \is c_{u_1 p^{s-1}, u_2 p^{s-1}}(t^p) \quad \mod {p^{2s}}, \quad s \ge 1.
\]
This is an example of~\emph{supercongruences}. 

Note that values $t=\pm 1$ are fixed by our excellent Frobenius lift $t \mapsto t^p$. For example, for $t=1$ we have supercongruences for the expansion coefficients of
\[
\frac1{1-x_1-x_2} = \sum_{\v u \in \Z_{\ge 0}^2} \binom{u_1+u_2}{u_1} \, \v x^\v u.
\]  
Namely, the binomial coefficients satisfy
\[
\binom{(u_1+u_2)p^s}{u_1p^s} \is \binom{(u_1+u_2)p^{s-1}}{u_1p^{s-1}} \mod {p^{2s}}.
\]
This supercongruence is `twice stronger' than the Gauss' congruence in \S\ref{sec:gauss-cong}.  
\bigskip

\subsection{Calabi--Yau families}\label{sec:CY-families}
In~\cite[\S 7]{DCIII} we describe excellent Frobenius lifts for \emph{completely symmetric Calabi-Yau families}. A lattice polytope $\Delta \subset \R^n$ is called \emph{reflexive} if it is of maximal dimension and every codimension~1 face of $\Delta$ can be given by an equation $\sum_{i=1}^n a_i u_i=1$ with all $a_i \in \Z$. This condition implies that $\Delta^\circ \cap \Z^n =\{\v 0\}$ and every $\v u \in \Z^n$ lies on an integral dilation of the Euclidean boundary of $\Delta$. 

\begin{definition}\label{CY-family-def}\footnote{Reflexivity of the Newton polytope is closely connected to the Calabi--Yau property . Namely, Victor Batyrev showed in~\cite{Batyrev} that when $R=\C$, $f \in \C[x_1^{\pm1},\ldots,x_n^{\pm1}]$ has a reflexive Newton polytope $\Delta$ and satisfies a certain condition called $\Delta$-regularity, then the toric hypersurface $f(\v x)=0$ can be compactified to a Calabi--Yau variety in a certain projective space $\P_\Delta$ constructed using the Newton polytope only. 

A Calabi-Yau variety of dimension $d$ has a nowhere vanishing holomorphic differential $d$-form which is unique up to multiplication by a constant. In our setup this form on the hypersurface of zeroes $X_f$ is represented by the element $1/f(\v x)\in \Omega_f$.}A  Calabi--Yau family is given by 
\[
1-tg(\v x) = 0 
\]
with $g(\v x)\in\Z[x_1^{\pm1},\ldots,x_n^{\pm1}]$ whose Newton polytope $\Delta \subset \R^n$ is reflexive. A Calabi--Yau family is called completely symmetric if the only non-zero integral points in the Newton polytope are vertices and there is a finite subgroup $\sG \subset {\rm GL}_n(\Z)$ which acts transitively on the set of vertices and preserves $g(\v x)$. 
\end{definition}

Here are some examples. The reader will find their detailed disussion in~\cite[\S 7]{DCIII}:
\begin{itemize}
\item simplicial family
\[
g(\v x) = x_1+\ldots+x_n+\frac1{x_1\cdots x_n}
\]
\item hyperoctahedral family
\[
g(\v x) = x_1+\frac1{x_1}+\ldots+x_n+\frac1{x_n}
\]
\item hypercubic family
\[
g(\v x) = \left(x_1+\frac1{x_1}\right)\cdots\left(x_n+\frac1{x_n}\right)
\]
\item $A_n$-family
\[
g(\v x) = \left(1+x_1+\ldots+x_n\right)\left(1 + \frac1{x_1}+\ldots+\frac1{x_n}\right)
\]

\end{itemize}

One takes $f(\v x)=1-tg(\v x)$. For $R$ we take a $p$-adically closed subring of $\Z_p\lb t \rb$. More precisely, 
\be{CY-fil2-ring-R}
R = \text{ $p$-adic completion of } \Z[t,1/hw_1(t),1/hw_2(t)],
\ee
where $hw_1(t),hw_2(t)\in \Z[t]$ are the so-called first and second Hasse--Witt polynomials. These polynomials are defined in~\cite[\S 7]{DCIII} for a given Frobenius lift $\sigma$, but change of the lift results in a polynomial which equals to the previous one modulo $p$. In the view of (ii) in Exercise~\ref{frob-lift-exercise} the ring in~\eqref{CY-fil2-ring-R} is defined independently of the choice of $\sigma$. 

Let $\Gamma \subseteq \Z^n$ be the lattice spanned by the vertices of $\Delta$. For completely symmetric Calabi--Yau families for which $\Gamma=\Z^n$ one can show that 
\[
\Omega_f(\Delta^\circ)^\sG / \fil_2 \cong R  \frac{1}{f(\v x)} + R \left(\frac{1}{f^2(\v x)} \right).\footnote{If $\Gamma \subsetneq \Z^n$ then one also needs to restrict to rational functions whose numerators are supported in $\Gamma$. Such modules are examples of more sophisticated \emph{Dwork crystals} considered in~\cite{DCIII}. The theory of this section works for them along the same lines, but we prefer to avoid these cases here for simplicity. The reader may check that $\Gamma=\Z^n$ for simplicial, hyperoctahedral and $A_n$ families, while for the hypercubic family one has $[\Z^n:\Gamma]=2^{n-1}$.}
\]
Here $\Omega_f(\Delta^\circ)^\sG$ means the submodule of $\sG$-invariant elements in $\Omega_f(\Delta^\circ)$. 
Another basis in this rank 2 $R$-module is given by $1/f$ and $\theta(1/f)$, where $\theta= t \frac {d}{dt}$. Therefore for any Frobenius lift $\sigma: R \to R$ one has
\[
\cartier\left(\frac{1}{f(\v x)} \right) = \lambda_0(t) \frac{1}{f^\sigma(\v x)} + \lambda_1(t) \left( \theta \frac{1}{f(\v x)} \right)^\sigma \mod {p^2 \fil_2^\sigma}.
\]
Here elements $\lambda_0,\lambda_1 \in R$ depend on $\sigma$ and can be determined explicitly in terms of the Picard--Fuchs differential equation satisfied by the period function 
\[\bal
F_0(t) &= \frac1{(2 \pi i)^n} \oint \cdots \oint \frac1{1-t g(\v x)} \frac{dx_1}{x_1} \cdots \frac{dx_n}{x_n}\\
& = \sum_{m=0}^{\infty} c_m t^m, \quad c_m = \text{ constant term of } g(\v x)^m. 
\eal\]
Earlier we denoted the series $F_0(t)$ by $\gamma(t)$, but a change of notation at this point is useful. 
Denote $L_2 = \theta^2 + A(t) \theta + B(t)$ where $A,B \in R$ are the elements determined by the condition $L_2(1/f) \in \fil_2$. Note that $F_0(t)=\sP_{\v 0}(1/f(\v x))$ where $\sP_{\v 0}: \Omega_f \to \Z_p\lb t \rb$ is the period map of Example~\ref{period0example}.  Since this period map commutes with the derivation $\theta$, series $F_0(t) = \sP_{\v 0}(1/f)$ is annihilated by the differential operator $L_2$. There is unique second solution to $L_2 y = 0$ which has the form
\[
\log(t) F_0(t) + F_1(t), \quad F_1(t) \in t\Q[[t]]. \footnote{In the Appendix to~\cite{DCIII} it is shown that the series $F_1(t)$, and hence also $A(t),B(t) \in R$ do not depend on the prime $p$. There is a Picard--Fuchs differential operator, which is an operator of minimal degree $L \in \Q(t)[\theta]$ such that $L(1/f(\v x)) \in d\Omega_f$. This $L$ annihilates the series $F_0(t)$ (obviously) and $\log(t) F_0(t) + F_1(t)$ (much less obviously), and therefore $L_2$ is a right factor of $L$ in the bigger ring $\Q[[t]][\theta]$. Operator $L$ describes the Gauss-Manin connection on the de Rham module $\Omega_f(\Delta^\circ)^\sG / d \Omega_f$. It will appear in \S\ref{sec:frob-structure}. }
\]
In~\cite[\S 7]{DCIII} we compute $\lambda_0$ and $\lambda_1$ in terms of the series $F_0(t)$, $F_1(t)$ and the Frobenius lift $\sigma$. We find that there is a unique excellent lift $\sigma_0$. It can be conveniently described in terms of the so-called \emph{canonical coordinate}
\[
q(t) = \exp \left(\frac{\log(t) F_0(t) + F_1(t)}{F_0(t)}\right) = t \exp \left(\frac{F_1(t)}{F_0(t)}\right) \in t + t^2 \Q[[t]]. 
\]

\begin{theorem}{\cite[Theorem 7.3]{DCIII}}\label{exc-lift-thm} Consider a completely symmetric Calabi--Yau family 
\[
1 - t g(\v x) = 0
\]
as in Definition~\ref{CY-family-def}. Assume that $p \nmid \# \sG \times [\Z^n:\Gamma] \times c$, where $\sG$ is the symmetry group of the family, $\Gamma$ is the lattice generated by the vertices of $\Delta$ and $c \in \Z$ is the vertex coefficient of $g(\v x)$. Then 
\begin{itemize}
\item[(i)] the canonical coordinate is $p$-integral: $q(t) \in t + t^2 \Z_p\lb t \rb$;
\item[(ii)] there is a unique Frobenius lift $\sigma_0:\Z_p\lb t \rb \to \Z_p\lb t \rb$ such that
\[
\cartier\left(\frac{1}{f(\v x)} \right) = \lambda(t) \frac{1}{f^{\sigma_0}(\v x)} \mod {p^2 \fil_2^{\sigma_0}} \quad \text{ with } \lambda(t)=\frac{F(t)}{F(t^{\sigma_0})};
\]
\item[(iii)] the~\emph{excellent lift} $\sigma_0$ is given in terms of the canonical coordinate by
\[
q \mapsto c^{p-1} q^p;
\]
moreover, one has $\sigma_0(R) \subset R$ where $R$ is the ring defined in~\eqref{CY-fil2-ring-R}.
\end{itemize}
\end{theorem}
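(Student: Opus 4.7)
The strategy is to reduce the Cartier action to the two-dimensional quotient $\Omega_f(\Delta^\circ)^\sG/\fil_2$ in which the pair $\{1/f,\,\theta(1/f)\}$ forms an $R$-basis, read off the matrix of $\cartier$ in this basis via two independent periods, and then solve for the lift that diagonalizes the action. Fix any Frobenius lift $\sigma$. By Theorem~\ref{DCIII-main} there exist unique $\lambda_0(t),\lambda_1(t)\in R$ with
\[
\cartier\!\left(\frac{1}{f(\v x)}\right) \equiv \lambda_0(t)\,\frac{1}{f^\sigma(\v x)} + \lambda_1(t)\!\left(\theta\frac{1}{f(\v x)}\right)^{\!\!\sigma} \pmod{p^2\fil_2^\sigma}.
\]
To extract $\lambda_0,\lambda_1$ I would apply two period maps that annihilate $\fil_2^\sigma$. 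The first is the constant-term period $\sP_{\v 0}$ of Example~\ref{period0example}; using that it commutes with $\cartier$ and vanishes on formal derivatives, it yields a linear relation among $F_0(t)$, $F_0(t^\sigma)$, and $\sigma(\theta F_0)$. For a second, independent relation I would exploit the logarithmic solution of $L_2$: the $\Z_p\lb t\rb[\log t]$-valued period map sending $1/f \mapsto F_0 \log t + F_1$, obtained by formally solving $L_2 y=0$ against a second cycle, yields the analogous equation for $F_0\log t + F_1$. Together the two equations determine $\lambda_0,\lambda_1$ over $R/p^2R$ as explicit rational expressions in $F_0,F_1$ and their $\sigma$-transforms.

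Imposing the excellent-lift condition $\lambda_1 \equiv 0 \pmod{p^2}$ and eliminating $\lambda_0$ between the two relations reduces, after a short calculation, to
\[
\sigma\!\left(\log t + \frac{F_1(t)}{F_0(t)}\right) \equiv p\!\left(\log t + \frac{F_1(t)}{F_0(t)}\right) + C \pmod{p^2}
\]
for some constant $C\in \Z_p$. Exponentiating, this reads $q(t^\sigma) = e^C q(t)^p$ with $q(t)=t\exp(F_1/F_0)$ the canonical coordinate; comparing the coefficient of $t^p$ on both sides, using the vertex coefficient $c$ of $g$ to compute the leading term of $F_0,F_1$, pins down $e^C = c^{p-1}$. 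Conversely, any Frobenius lift $\sigma$ satisfying $\sigma(q)=c^{p-1}q^p$ yields $\lambda_1\equiv 0$, and then the first relation reduces to $F_0=\lambda_0\,F_0^\sigma$, which gives the formula $\lambda=F_0/F_0^{\sigma_0}$ in (ii) together with the description of $\sigma_0$ in (iii). Uniqueness is then automatic, since $q$ is a coordinate on $\Z_p\lb t\rb$.

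It remains to establish (a) $p$-integrality of $q(t)$, i.e.\ part~(i), and (b) the inclusion $\sigma_0(R)\subset R$. For (a) I would show $F_1/F_0\in\Z_p\lb t\rb$ by combining Dwork's congruences (Theorem~\ref{dwork-congs-them}) for $F_0$ with the analogous mod-$p^2$ congruence derived from Example~\ref{generalized-exp-coeffs} applied to an auxiliary period that recovers $F_1$; the hypothesis $p \nmid \#\sG\cdot[\Z^n:\Gamma]\cdot c$ enters through invertibility of the Hasse--Witt quantities appearing in the denominators. Granted (a), the formula $\sigma_0(t)=q^{-1}(c^{p-1}q(t)^p)$ defines a Frobenius lift on $\Z_p\lb t\rb$. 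For (b), by Exercise~\ref{frob-lift-exercise}(iii) it suffices to check that $\sigma_0$ preserves the mod-$p$ reductions of $hw_1(t)^{-1}$ and $hw_2(t)^{-1}$; I would do this by expressing $hw_1,hw_2$ through the Picard--Fuchs data and tracking their behaviour first under the change of variable $t\leftrightarrow q$ and then under $q\mapsto c^{p-1}q^p$.

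The main technical obstacle is this last step, namely executing (a) sharply enough so that the canonical coordinate is visibly $p$-integral and then propagating this through the change-of-variable to verify preservation of $R$; heuristically it is clear that the excellent lift exists as a formal object in $\Q_p\lb t\rb$, but proving that the resulting series lies in $\Z_p\lb t\rb$ and respects the localization at the Hasse--Witt polynomials is where the coprimality hypotheses on $p$ are genuinely used. This is essentially the calculation carried out in~\cite[\S 7]{DCIII}.
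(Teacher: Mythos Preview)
The paper does not prove this theorem; it sets up the rank-two quotient $\Omega_f(\Delta^\circ)^{\sG}/\fil_2$ with basis $\{1/f,\theta(1/f)\}$, writes $\cartier(1/f)\equiv\lambda_0/f^\sigma+\lambda_1(\theta(1/f))^\sigma\pmod{p^2\fil_2^\sigma}$, asserts that in \cite[\S7]{DCIII} one computes $\lambda_0,\lambda_1$ in terms of $F_0,F_1,\sigma$ and solves $\lambda_1=0$ for $\sigma$, and then quotes the result. Your outline follows exactly this scheme and is candid about deferring the computations to the same reference, so at the level of overall strategy your proposal and the paper agree.

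Two points in your sketch should be sharpened. First, the excellent-lift condition is $\lambda_1=0$ in $R$, not merely $\lambda_1\equiv0\pmod{p^2}$: since $(\theta(1/f))^\sigma$ lies in the direct complement $\Omega_{f^\sigma}^{(2)}$ of $\fil_2^\sigma$ given by Theorem~\ref{DCIII-main}, having $\lambda_1(\theta(1/f))^\sigma\in p^2\fil_2^\sigma$ forces $\lambda_1=0$ exactly. Second, and more seriously, your ``second period map'' valued in $\Z_p\lb t\rb[\log t]$ is not a period map in the paper's sense---an $R$-linear map $\Omega_f\to S$ vanishing on $d\Omega_f$---and the logarithmic solution $F_0\log t+F_1$ does not arise from any cycle or coefficient extraction available in this framework. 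A route that \emph{is} available within the paper's machinery is the differential equation of \S\ref{sec:frob-structure}: the $2\times2$ Cartier matrix on the $\fil_2$-quotient satisfies $\theta\Lambda=N\Lambda-\Lambda\widetilde N$, so $\Lambda(t)=U(t)\Lambda_0\,U(t^\sigma)^{-1}$ with $U$ the Wronskian of $y_0,y_1$; the constant matrix $\Lambda_0$ is then fixed by a boundary analysis at $t=0$, and it is this analysis---rather than a naive comparison of the $t^p$-coefficient in $q$---that produces the factor $c^{p-1}$.
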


The inverse series $t=t(q)\in q+q^2 \Q[[q]]$ is called the~\emph{mirror map}. One particular consequence of this theorem is that the series $t^{\sigma_0}=\sigma_0(t)$ belongs to the ring~\eqref{CY-fil2-ring-R}. This ring is contained in the \emph{field of $p$-adic analytic elements}
\be{p-adic-analytic-elts}
E_p = \text{ $p$-adic completion of } \Q(t). 
\ee
It is often the case for Calabi-Yau families in $n \le 3$ dimensions that $t(q)$ is an elliptic modular function (see~\cite[\S 8]{DCIII}). In~\cite[\S 7]{Dw-p-adic-cycles} Dwork proved that for the elliptic $j$-invariant, the series $j(q^p)$ is a $p$-adic analytic function of $j(q)$. Particularly, he shows it can be approximated by rational functions of $j$ whose denominators are powers of a first Hasse--Witt polynomial which we denote by $hw^{(1)}(t)$ in our setup. This is a polynomial whose roots $\mod p$ are precisely the supersingular $j$-invariants. Dwork attributes this theorem to Deligne. For $n \ge 4$ we do not expect any modularity of $t(q)$. Still, our result shows that $t(q^p)$ is a $p$-adic analytic function of $t(q)$, though in general the second Hasse--Witt polynomial also occurs in denominators of interpolating rational functions. We shall look at fixed points of these $p$-adic analytic functions.

\begin{exercise} In the setup of Theorem~\ref{exc-lift-thm}, prove that for every $a \in \F_p$ such that $hw^{(1)}(a) \ne 0$ and $hw^{(2)}(a) \ne 0$ there is a unique $\tilde a \in \Z_p$, $\tilde a \is a \mod p$ such that $\tilde a$ is a fixed point of the excellent Frobenius lift: 
\[
t^{\sigma_0}(\tilde a) = \tilde a.
\] 
\end{exercise}

\begin{problem} Compute fixed points of the excellent Frobenius lift for the specific completely symmetric Calabi--Yau families given earlier in this section.  
\end{problem}

This problem is of interest because we expect supercongruences to hold at these points as we demonstrated in a simple example in the end of~\S\ref{sec:exc-lifts}. 

\subsection{Cartier matrices on de Rham quotients}

To shorten the notation, we write $\Omega_f(\mu)/d \Omega_f$ for what actually is $\Omega_f(\mu)/(d \Omega_f \cap \Omega_f(\mu))$. In situations when this is a free $R$-module of finite rank one can combine reduction modulo derivatives with computation of the Cartier action modulo higher formal derivatives by means of congruences in order to obtain Cartier matrices on $\Omega_f(\mu)/d \Omega_f$. Let us list the necessary steps:
\begin{itemize}
\item determine a basis in  $\Omega_f(\mu)/d \Omega_f$,
\item determine $m$ such that $\fil_m \subset d\Omega_f \cap \Omega_f(\mu)$, check the $m$th Hasse--Witt condition for $\mu$ and solve systems of congruences to determine the matrix $\Lambda^{(m)}(\mu)$ of the Cartier action modulo $\fil_m$, 
\item describe reduction of the basis in $\Omega_f^{(m)}(\mu)$ from the second step to the basis in $\Omega_f(\mu)/d \Omega_f$ which was found in the first step and compute the matrix of the Cartier action modulo $d\Omega_f$ using $\Lambda^{(m)}(\mu)$. 
\end{itemize}

In~\cite{Cartier0} we performed these steps for \emph{simplicial} and \emph{hyperoctehedral} Calabi-Yau families in $n$ dimensions. The results will be presented in \S\ref{sec:frob-structure}. Let us note here that checking higher Hasse--Witt conditions may be a difficult task. For these $n$-dimensional families we needed the $n$th Hasse--Witt condition. The following theorem shows that these conditions hold rather generally for families whose Newton polytopes are sufficiently simple. We consider families 
\[
1-t g(\v x) = 0
\]
with $g \in \Z[x_1^{\pm 1},\ldots,x_n^{\pm 1}]$ and assume that the Newton polytope $\Delta$ is reflexive. Examples were given in \S\ref{sec:CY-families}.  A proper face $\tau \subseteq \Delta$ is called \emph{a simplex of volume~$1$} if it has $\dim(\tau)+1$ vertices and all lattice points in the $\R_{\ge0}$-cone generated by $\tau$ are integer linear combinations of the vertex vectors. More generally, the simplicial volume of $\tau$ is the index of the lattice of points generated as $\Z$-linear combinations of its vertices in the lattice of integral points in the $\R$-vector space spanned by them. 

\begin{theorem}{\cite[\S 3]{IN}}
Suppose $p>n$. If all proper faces of $\Delta $ are simplices of volume~1 and all vertex coefficients of $g(\v x)$ are in $\Z_p^\times$ then for any open $\mu \subseteq \Delta$ and any $k<p$, the $k$-th Hasse--Witt condition holds for $\mu$ over the ring $\Z_p\lb t \rb$.  
\end{theorem}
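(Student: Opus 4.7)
The plan is to verify all Hasse--Witt conditions up to order $k$ simultaneously by an explicit mod-$p$ analysis of $HW^{(\ell)}(\mu)$ using the combinatorial structure imposed by the volume-one simplicial faces. I would fix the Frobenius lift $\sigma:t\mapsto t^p$ on $\Z_p\lb t\rb$ (which is the identity on the coefficients of $g$) and use the congruence from the exercise following Definition~\ref{higherHasseWitt},
\[
HW^{(\ell)}_{\v u,\v v}(\mu) \is \bigl[\v x^{p\v v - \v u}\bigr] \frac{f^\sigma(\v x^p)^\ell}{f(\v x)^\ell} \mod{p^\ell},
\]
together with the bivariate expansion
\[
\frac{(1 - t^p g(\v x^p))^\ell}{(1 - t g(\v x))^\ell} = \sum_{i=0}^\ell \sum_{j \ge 0} (-1)^i \binom{\ell}{i} \binom{j+\ell-1}{\ell-1}\, t^{ip+j}\, g(\v x^p)^i\, g(\v x)^j.
\]

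The first key step is the combinatorial observation forced by the assumptions: since $\Delta$ is reflexive and every proper face is a unimodular simplex, the Laurent polynomial $g$ is supported on $\{\v 0\}\cup V(\Delta)$, and every lattice point $\v w$ in the cone over a face $\tau$ has a \emph{unique} representation $\v w = \sum_{\v b \in V(\tau)} n_{\v b}(\v w)\,\v b$ with $n_{\v b}(\v w)\in\Z_{\ge 0}$. Writing $|\v w|=\sum n_{\v b}(\v w)$, this yields an explicit multinomial formula for $[\v x^\v w]\,g(\v x)^j$ valid for $j\ge|\v w|$, whose factorial denominators involve at most $\dim(\tau)+1\le n$ non-trivial indices; hence $p>n$ makes the relevant multinomial scalars into $p$-adic units. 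The second step is to order rows and columns of $HW^{(\ell)}(\mu)$ by depth $|\v u|,|\v v|$ and stratify further by the face through which each index lies. After dividing the row indexed by $\v u$ by $p^{|\v u|-1}$ (the divisibility pattern predicted by Lemma~\ref{cartier-vs-HW-lemma}(ii)), I would argue that modulo $p$ the resulting matrix is block-diagonal in this stratification, each diagonal block being itself diagonal in the monomial basis with entries equal to unit multiples of products of powers of the $g_{\v b}$. Since these vertex coefficients are $p$-adic units by hypothesis and the multinomial scalars are units by $p>n$, the entries are units in $\Z_p\lb t\rb$, giving $\det HW^{(\ell)}(\mu)\in p^{L(\ell,\mu)}\,\Z_p\lb t\rb^\times$.

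The main obstacle I expect is showing that the off-diagonal contributions---those coming from the terms with $i\ge1$ in the expansion above, which carry extra factors of $t^p$, as well as those coming from row/column pairs $(\v u,\v v)$ of distinct depths or lying over different faces---vanish modulo $p$ after the row normalization. Establishing this requires balancing the simplicial uniqueness of vertex decompositions against the extra $t$-adic and $p$-adic orders produced by cross terms, and verifying that no cancellation in the binomials $\binom{\ell}{i}$ and $\binom{j+\ell-1}{\ell-1}$ (which is again where $p>n$ is invoked to keep these coefficients controlled modulo $p^\ell$) spoils the picture. Once the block-triangular structure is in place, the condition $\det HW^{(\ell)}(\mu)\in p^{L(\ell,\mu)}\,\Z_p\lb t\rb^\times$ for all $\ell\le k$ follows uniformly, which is precisely the $k$-th Hasse--Witt condition.
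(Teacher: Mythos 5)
Your combinatorial starting point is the right one and is indeed the engine of the argument in~\cite[\S 3]{IN}: since every proper face of $\Delta$ is a unimodular simplex, $\supp(g)\subseteq\{\v 0\}\cup V(\Delta)$, coefficients $[\v x^{\v w}]g^j$ reduce to vertex-decomposition multinomials, and $p>n$ ensures the relevant multinomials are $p$-adic units. That said, as written the argument has two gaps.

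First, the ``unique representation'' you invoke is weaker than what is needed. Unimodularity of a face $\tau$ gives a unique writing $\v w=\sum_{\v b\in V(\tau)}n_{\v b}\v b$ with $n_{\v b}\ge0$, but in $[\v x^{\v w}]g^j$ one sums over all of $V(\Delta)$: for $j>|\v w|$ there are in general several decompositions (for the simplicial family any integer multiple of the relation $\sum_{\v b}\v b=\v 0$ can be added), so $[\v x^{\v w}]g^j$ is a sum of multinomials, not a single one. Showing that, after pairing against the binomials $\binom{\ell}{i}\binom{j+\ell-1}{\ell-1}$ and reducing modulo $p^\ell$, only the contribution of the minimal decomposition survives is exactly the ``main obstacle'' you name at the end --- and that is the theorem. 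Flagging it without resolving it leaves the central claim unproved.

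Second, and more basic: the entries of $HW^{(\ell)}(\mu)$ over $\Z_p\lb t\rb$ carry nontrivial $t$-monomial factors, which your normalization (dividing rows by powers of $p$ only) does not account for, and which make the claimed ``diagonal entries equal to unit multiples of products of powers of the $g_{\v b}$'' impossible as stated. Concretely, for the simplicial family with $n=2$, $p=5$, $\mu=\Delta$ and $\ell=1$, a direct expansion gives a lower-triangular matrix with diagonal $(1-24t^3,\;t^4,\;t^4,\;t^4)$, so $\det HW^{(1)}(\Delta)=(1-24t^3)\,t^{12}$, which has $p$-adic valuation $L(1,\Delta)=0$ but is \emph{not} a unit in $\Z_p\lb t\rb$. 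Likewise for $\mu=\Delta^\circ$ and $\ell=2$ the depth-one diagonal entries are $-\binom{2p-2}{p-1}t^{p-1}+O(t^{2p-3})$, again $t$-divisible. So any normalization that is to yield a unit must track powers of $t$ as well as powers of $p$; in~\cite[\S 3]{IN} this is done via the $t$-normalized Hasse--Witt polynomials. Your argument would need to be rebuilt with this extra bookkeeping from the outset, and the block structure claim re-examined --- at best one obtains a block-triangular shape after simultaneous $p$- and $t$-normalization, with the diagonal blocks given by the minimal-decomposition multinomials, not a block-diagonal matrix in the monomial basis.

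A smaller point: the divisibility pattern of Lemma~\ref{cartier-vs-HW-lemma}(ii) applies to columns of the Cartier matrix in an \emph{extended} basis, not to rows of $HW^{(\ell)}$ in the monomial basis, and your exponent $|\v u|-1$ is off by one in any case (it is $\le-1$ for $\v u=\v 0$). While this does not affect the total power of $p$ extracted from the determinant, it does affect which entries you can normalize and hence the mod-$p$ shape you claim for the result.
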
 

\begin{problem} State assumptions under which Hasse--Witt conditions hold for polytopes of more general shape. 
\end{problem}

Solutions to this problem would allow to apply our methods to a bigger realm of Calabi--Yau families.

\subsection{$p$-adic Frobenius structures on differential equations}\label{sec:frob-structure}

Let us look at the situation when the coefficients of $f(\v x)$ depend on a parameter, that is we have $\Z[t] \subset R \subset \Z_p\lb t \rb$ as in examples of Calabi--Yau families from \S\ref{sec:CY-families}. In these case Cartier matrices, which we usually denoted by $\Lambda$, also depend on $t$. We will show that they satisfy very particular differential equations. 

Suppose that $\Omega_f(\mu)/d\Omega_f$ is a free $R$-module of rank $m$ and $\Lambda(t)=(\lambda_{i,j}(t))$ is the matrix of Cartier operation in some basis $\omega_1,\ldots,\omega_m$ of this quotient module:
\be{cartier-matrix-frob-str}
\cartier \omega_i = \sum_{j=1}^m \lambda_{i,j}(t) \omega_j^\sigma \quad \mod {d\hat\Omega_f}
\ee
for $i=1,\ldots,m$. We assume that $R$ is $p$-adically complete and $\hat\Omega_f(\mu)/d\hat\Omega_f = \Omega_f(\mu)/d\Omega_f$. This will be the case in the examples below.  Note that derivations of $R$, which act on rational functions in $\Omega_f(\mu)$ by the usual rules of differential calculus, preserve this modules. Since they commute with derivations in the variables $x_1,\ldots,x_n$, derivations of $R$ also map the submodule $d\Omega_f$ to itself. This fact turns $\Omega_f(\mu)/d\Omega_f$ into a differential module. Pick a derivation, e.g. $\theta=t \frac{d}{dt}$, and write down the matrix of its action in our basis: let $N \in R^{m \times m}$ be such that
\be{connection-matrix}
\theta\omega_i = \sum_{j=1}^m N_{ij}(t) \omega_j \quad \mod {d\Omega_f}.
\ee
Let $\widetilde N(t)$ denote the matrix of $\theta$ in the basis $\omega_i^\sigma$, $i=1,\ldots,m$ in $\Omega_{f^\sigma}(\mu)/d\Omega_{f^\sigma}$.   

\begin{proposition} The Cartier matrix $\Lambda$ satisfies the differential equation 
\be{frob-de}
\theta(\Lambda) = N \Lambda - \Lambda \widetilde N.
\ee
\end{proposition}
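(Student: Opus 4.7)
The plan is to apply the derivation $\theta$ to the defining congruence
\[
\cartier \omega_i \;\equiv\; \sum_{j} \lambda_{ij}(t) \, \omega_j^\sigma \pmod{d \hat\Omega_{f^\sigma}}
\]
and to read off the claimed matrix identity by comparing coefficients in the basis $\{\omega_j^\sigma\}$ of the free quotient $\hat\Omega_{f^\sigma}(\mu)/d\hat\Omega_{f^\sigma}$.

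Two preliminaries are needed. First, $\theta$ and $\cartier$ commute. This is immediate from the formal-expansion description of $\cartier$: if $\omega = \sum_\v v c_\v v(t)\,\v x^\v v$, then both $\theta\cartier\omega$ and $\cartier\theta\omega$ equal $\sum_\v v \theta(c_{p\v v})(t)\,\v x^\v v$, since $\theta$ acts on the $t$-axis while $\cartier$ selects along the $\v v$-axis. (In contrast to the derivations $x_i\partial/\partial x_i$ of Theorem~\ref{cartier-props}(iii), which introduce a factor of $p$ when commuted past $\cartier$, parameter differentiation passes through $\cartier$ unchanged.) Second, since $\theta$ commutes with each $x_i\partial/\partial x_i$, it preserves $d\hat\Omega_f$ and $d\hat\Omega_{f^\sigma}$; and by Theorem~\ref{cartier-props}(iii), $\cartier$ maps $d\hat\Omega_f$ into $d\hat\Omega_{f^\sigma}$.

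With these in hand the left-hand side becomes
\[
\theta(\cartier \omega_i) \;=\; \cartier(\theta\omega_i) \;\equiv\; \cartier\!\Bigl(\sum_k N_{ik}\omega_k\Bigr) \;=\; \sum_k N_{ik}\,\cartier \omega_k \;\equiv\; \sum_j \Bigl(\sum_k N_{ik}\lambda_{kj}\Bigr)\,\omega_j^\sigma,
\]
while the Leibniz rule and the definition of $\widetilde N$ give
\[
\theta\!\Bigl(\sum_j \lambda_{ij}\,\omega_j^\sigma\Bigr) \;\equiv\; \sum_j \theta(\lambda_{ij})\,\omega_j^\sigma + \sum_j \Bigl(\sum_k \lambda_{ik} \widetilde N_{kj}\Bigr)\,\omega_j^\sigma,
\]
both congruences taken modulo $d\hat\Omega_{f^\sigma}$. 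Equating coefficients of $\omega_j^\sigma$ in the free quotient yields entrywise $\theta(\lambda_{ij}) = (N\Lambda)_{ij} - (\Lambda \widetilde N)_{ij}$, i.e. the claimed identity~\eqref{frob-de}.

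There is no substantive obstacle beyond the commutativity $\theta\cartier = \cartier\theta$; once this is granted (and it is visible from the formal-expansion definition, or directly from formula~\eqref{cartier-formula-again}, where the only $t$-dependence sits in the coefficients of $f$, $f^\sigma$ and $G$ so $\theta$ passes through the combinatorial prefactors transparently), the argument is a mechanical manipulation in the quotient $\hat\Omega_{f^\sigma}/d\hat\Omega_{f^\sigma}$, relying on the standing hypothesis that this quotient agrees with $\Omega_{f^\sigma}/d\Omega_{f^\sigma}$.
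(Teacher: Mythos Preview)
Your proof is correct and follows essentially the same route as the paper: commute $\theta$ with $\cartier$, expand both $\theta(\cartier\omega_i)$ and $\cartier(\theta\omega_i)$ in the basis $\omega_j^\sigma$ using the defining congruences for $\Lambda$, $N$, $\widetilde N$, and compare. You are slightly more explicit than the paper in justifying that $\theta$ preserves $d\hat\Omega_{f^\sigma}$, but there is no substantive difference in approach.
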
  
\begin{proof} Note that $\cartier$ and $\theta$ commute as operations on $\hat\Omega_f$ and $\hat\Omega_{f^\sigma}$: 
\[
\theta \circ \cartier = \cartier \circ \theta.
\] 
Perhaps the easiest way to check that they commute is by looking on the Cartier action on formal expansions. Let us apply $\theta$ to the equation~\eqref{cartier-matrix-frob-str}:
\[
\theta(\cartier \, \omega_i) = \sum_{j=1}^m \left( \theta(\lambda_{i,j}) \omega_j^\sigma + \lambda_{i,j}\sum_{k=1}^m \widetilde N_{j,k} \omega_k^\sigma \right)\quad \mod {d\hat\Omega_{f^\sigma}}. 
\] 
On the other hand, the Cartier operation is $R$-linear and therefore
\[
\cartier(\theta \omega_i) = \cartier\left(  \sum_{j=1}^m N_{i,j} \omega_j\right) = \sum_{j=1}^m N_{i,j} \sum_{j=1}^m \lambda_{j,k}\omega_k^\sigma \quad \mod {d\Omega_{f^\sigma}}.
\]
Since $\cartier$ and $\theta$ commute, the two above expressions are equal and we obtain that $\theta(\Lambda) + \Lambda \widetilde N = N \Lambda$. This is precisely our claim.  
\end{proof}

The situation of modules $\Omega_f(\mu)/d^k \Omega_{\rm formal}$ with $k \ge 1$ and matrices $\Lambda^{(k)}(t)$ can be considered in exactly the same way yielding the same differential equation~\eqref{frob-de}, see~\cite[Prop 5.12]{DCIII}.

\begin{exercise} Suppose $U(t)$, $V(t)$ are fundamental matrices of solutions to $\theta(U)=N U$ and $\theta(V)=\widetilde N V$ respectively. Assuming the multiple $U^{-1} \Lambda V$ makes sense, use~\eqref{frob-de} to check that $\theta(U^{-1} \Lambda V)=0$. We can conclude that 
\[
\Lambda(t) = U(t) \Lambda_0 V(t)^{-1},
\]
where $\Lambda_0$ is a constant matrix. 
\end{exercise}

Let us now suppose that the differential module $\Omega_f(\mu)/d\Omega_f$ has a \emph{cyclic basis}, that is we have an element $\omega \in \Omega_f(\mu)$ such that $\theta^i(\omega)$, $i=0,\ldots,m-1$ is a basis in this quotient module. Let 
\[
L = \theta^m + a_1(t) \theta^{m-1} + \ldots + a_{m-1}(t) \theta + a_m(t) \in R[\theta]
\]
be the differential operator such that $L \omega \in d\Omega_f$.  We will call it the Picard--Fuchs differential operator. Then 
\[
N=\begin{pmatrix}0&1&0&\ldots &0 \\0&0&1&\ldots&0\\&\vdots&\\-a_m(t)&-a_{m-1}(t)&&\ldots&-a_1(t)\end{pmatrix}.
\]
Let $y_0,\ldots,y_m$ be a basis of solutions to the differential equation $Ly=0$. Then it is not hard to see that $U(t)=(\theta^iy_j)_{0 \le i,j \le m-1}$ is a fundamental solution to the system $\theta(U)=NU$. The Cartier matrix in the bases $\theta^i \omega$ and $(\theta^i \omega)^\sigma$ repectively is then given by
\[
\Lambda(t) = U(t) \Lambda_0 U(t^\sigma)^{-1},
\]
where the constant matrix $\Lambda_0$ is yet to be determined. This formula expresses the fundamental observation due to Dwork that zeta functions in families can be computed using solutions to their Picard--Fuchs differential equations.

\begin{definition}
After Dwork, a \emph{$p$-adic Frobenius structure} for an ordinary differential operator 
\[
L = \theta^m + a_1(t) \theta^{m-1} + \ldots + a_{m-1}(t) \theta + a_m(t) \in \Q(t)[\theta]
\]
is defined as an invertible matrix $\Lambda(t) \in E_p^{m \times m}$ satisfying the differential equation
\[
\theta \Lambda(t) = N(t) \Lambda(t) - p \, t^p \Lambda(t) N(t^p).
\]
\end{definition}

Here $E_p$ is the field of $p$-adic analytic elements, see~\eqref{p-adic-analytic-elts}. For simplicity we have given this definition for the Frobenius lift $t^\sigma = t^p$, but it is not hard to modify it for general Frobenius lifts $\sigma$. The differential equation here is the same as~\eqref{frob-de}, if we note that $V(t)=U(t^p)$ satisfies the system $\theta V = \tilde N V$ with $\tilde N(t) = p t^p N(t^p)$. In~\cite{Dw-uniqueness} Dwork proves that, if a $p$-adic Frobenius structure exists for an irreducible operator $L$, then it is unique up to multiplication by a non-zero $p$-adic constant. 

We can summarize that our Cartier matrices provide examples of $p$-adic Frobenius structures for families of hypersurfaces. Let us now give examples in which the Cartier matrices are given for almost all but finitely many primes $p$ by a sort of universal expression, that is $p$ enters the formula for $\Lambda(t)$ as a variable.

\begin{example}\label{simplicial-de-example} Consider the simplicial family
\[
1 - t \left(x_1+x_2+\ldots+x_n + \frac1{x_1\ldots x_n}\right) = 0.
\]
Denote $s_n(t)=(n+1)(1-((n+1)t)^{n+1})$. Over the ring 
\[
R = \Z[t,s_n(t)^{-1}]
\] 
one has
\[
\Omega_f(\Delta^\circ) / d \Omega_f \cong \oplus_{i=0}^{n-1} R \, \theta^i \left(\frac1{f(\v x)}\right).
\]
The respective Picard--Fuchs operator $L$ such that $L(1/f(\v x))\in d\Omega_f$ is given by
\be{simplicial-PF}
L = \theta^n - ((n+1)t)^{n+1}(\theta+1)\ldots (\theta+n).
\ee
The proof of this fact can be found in~\cite[\S 5]{IN}.
\end{example}

The Picard--Fuchs differential operator~\eqref{simplicial-PF} in the simplicial example has maximal unipotent local monodromy at $t=0$. In this case one can define the \emph{standard basis} of solutions to $L$ near $t=0$. This is a unique basis of the form
\be{st-basis}\bal
y_0(t) &= F_0(t) \in 1+t\Q\lb t \rb\\
y_1(t) &= F_0(t) \log(t) + F_1(t)\\
y_2(t) &= F_0(t) \frac{\log^2(t)}{2!} + F_1(t) \log(t) + F_2(t)\\
\ldots\\
y_{n-1}(t) &= \sum_{j=0}^{n-1} F_j(t) \frac{\log^{n-1-j}(t)}{(n-1-j)!} 
\eal\ee
where $F_j(t)\in \Q\lb t \rb$, $F_0(0)=1$ and $F_i(0)=0$ for $i>1$. Now we can present Cartier matrices for simplicial families with respect to the cyclic bases $\theta^i(1/f(\v x))$ and $\theta^i(1/f(\v x))^\sigma$. For simplicity we take the Frobenius lift $t^\sigma = t^p$.

\begin{theorem}[\S 5 of \cite{IN} and Theorem 1.4 in \cite{Cartier0}]\label{Cartier0main} For every $p>n+1$ the Cartier matrix for the simplicial family in $n$ dimensions is given by $\Lambda(t) = U(t) \Lambda_0 U(t^p)^{-1}$ where $U(t)=(\theta^iy_j)_{0 \le i,j \le n-1}$ is the Wronskian matrix of standard solutions~\eqref{st-basis} to the Picard--Fuchs operator~\eqref{simplicial-PF} and
\[
\Lambda_0 = \Lambda(0) = \begin{pmatrix}1&p \alpha_1&p^2 \alpha_2&\ldots &p^{n-1} \alpha_{n-1} \\0&p&p^2 \alpha_1&\ldots&p^{n-1}\alpha_{n-2}\\&\vdots&\\0&0&&\ldots&p^{n-1}\end{pmatrix}
\] 
with 
\[
\alpha_j = \text{ coefficient of $x^j$ in } \frac{\Gamma_p(x)}{\Gamma_p(x/(n+1))^{n+1}}, \; j=1,\ldots,n-1.
\] 
Entries of the Cartier matrix $\Lambda(t)=(\lambda_{i,j}(t))_{0 \le i,j\le n-1}$ satisfy
\[
\lambda_{i,j}(t) \in p^j R,
\]
where $R \subset \Z_p\lb t \rb$ is the $p$-adic completion of $\Z[t,s_n(t)^{-1}]$, where $s_n(t)=(n+1)(1-((n+1)t)^{n+1})$.
\end{theorem}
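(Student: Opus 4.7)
The plan is to realize $\Lambda(t)$ as a $p$-adic Frobenius structure on the Picard--Fuchs operator~\eqref{simplicial-PF}, and then to pin down the initial condition $\Lambda_0 = \Lambda(0)$ by combining the maximally unipotent local monodromy at $t=0$ with Dwork-style $p$-adic Gamma-function computations of the unit-root period.

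First I would verify the hypotheses of Theorem~\ref{DCIII-main}. The module $\Omega_f(\Delta^\circ)/d\Omega_f$ over $R = \Z[t,s_n(t)^{-1}]\hat{\,}$ is free of rank $n$ with cyclic basis $\theta^i(1/f(\v x))$, $0 \le i \le n-1$, annihilated by the Picard--Fuchs operator $L$ from~\eqref{simplicial-PF}, according to~\cite[\S 5]{IN}. When $p>n+1$, the cited theorem of~\cite[\S 3]{IN} ensures that the $n$-th Hasse--Witt condition holds for $\mu=\Delta^\circ$ over $R$: all proper faces of the reflexive simplex $\Delta$ are volume-$1$ simplices and all vertex coefficients of $g$ equal $1 \in \Z_p^\times$. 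Hence Theorem~\ref{DCIII-main} (combined with the rank-matching identity $\fil_n\cap\Omega_f(\Delta^\circ)\subset d\Omega_f$) descends $\cartier$ to a well-defined matrix $\Lambda(t)\in R^{n\times n}$ in the chosen cyclic basis. The proposition of~\S\ref{sec:frob-structure} gives the differential equation $\theta\Lambda = N\Lambda - \Lambda\tilde N$ with $\tilde N(t)=pN(t^p)$, and the exercise following it (applied with the fundamental solution $V(t) := U(t^p)$ to $\theta V = \tilde N V$) yields
$$\Lambda(t) \= U(t)\,\Lambda_0\,U(t^p)^{-1}$$
for a unique constant matrix $\Lambda_0 \in \Z_p^{n\times n}$.

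The core step is the explicit computation of $\Lambda_0$. I would factor $U(t) = U_{\rm reg}(t)\exp(N\log t)$, where $U_{\rm reg}$ is $p$-adically analytic with $U_{\rm reg}(0)=I$ (this follows from the normalization $F_0(0)=1$, $F_j(0)=0$ for $j\ge 1$ of the standard basis~\eqref{st-basis}) and $N$ is the single-Jordan-block nilpotent log-monodromy matrix encoded in~\eqref{st-basis}. Because $\log(t^p)=p\log t$, regularity of $\Lambda(t)$ at $t=0$ forces the Lie-algebraic condition $N\Lambda_0 = p\Lambda_0 N$: indeed $N^k\Lambda_0 = p^k\Lambda_0 N^k$ implies $\exp(N\log t)\Lambda_0 = \Lambda_0\exp(pN\log t)$, so all log terms cancel and $\Lambda(0)=U_{\rm reg}(0)\Lambda_0 U_{\rm reg}(0)^{-1}=\Lambda_0$. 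A direct computation shows that the solutions of $N\Lambda_0 = p\Lambda_0 N$ for the standard shift nilpotent are precisely upper-triangular matrices with $(i,j)$ entry $p^j\alpha_{j-i}$ for $j\ge i$, where $\alpha_0=1$ is fixed and $\alpha_1,\ldots,\alpha_{n-1}$ are free. These free parameters are pinned down by identifying the top row of $\Lambda_0$ with the $p$-adic Taylor coefficients of the unit-root Frobenius eigenvalue $\lambda(t)=F_0(t)/F_0(t^p)$ from Theorem~\ref{frob-unit-root-p-adic-analytic-thm} in a suitable analytic uniformizer at $t=0$. Expanding $F_0(t)=\sum_{k\ge 0}\tfrac{((n+1)k)!}{(k!)^{n+1}}t^{(n+1)k}$ and converting factorial quotients $p$-adically via the Gross--Koblitz/Dwork identities for $\Gamma_p$, one recognizes the expansion of $\lambda$ as the Taylor expansion of $\Gamma_p(x)/\Gamma_p(x/(n+1))^{n+1}$, giving the stated $\alpha_j$.

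The divisibility $\lambda_{i,j}(t)\in p^jR$ then follows by combining formula~\eqref{cartier-formula-again}---which shows that $\cartier$ maps $\Omega_f^{(k)}$ into $\Omega_{f^\sigma}^{(k)}+p^k\hat\Omega_{f^\sigma}$---with the fact that $\theta^j(1/f)\in\Omega_f^{(j+1)}$: the $j$-th column of $\Lambda(t)$ records a Cartier image from Hodge-filtration level $j+1$, contributing the factor $p^j$. The main obstacle I expect is the explicit Gamma-function identification in the previous paragraph: producing \emph{precisely} the ratio $\Gamma_p(x)/\Gamma_p(x/(n+1))^{n+1}$ (rather than some equivalent-looking Dwork expression) requires a careful Gross--Koblitz-style expansion of the hypergeometric period around $t=0$, and it is this computation that is worked out in~\cite{IN, Cartier0}.
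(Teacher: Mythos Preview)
Your structural argument is sound up to the determination of $\Lambda_0$: the Frobenius-structure differential equation together with analyticity of $\Lambda(t)$ at the maximally unipotent point does force $N\Lambda_0=p\Lambda_0N$, hence the upper-triangular Toeplitz pattern $(\Lambda_0)_{ij}=p^j\alpha_{j-i}$ with $\alpha_0=1$; and the column divisibility $\lambda_{i,j}\in p^jR$ is exactly the filtered shape of the Cartier image recorded in the proof of Lemma~\ref{cartier-vs-HW-lemma}(ii).

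The genuine gap is your mechanism for pinning down $\alpha_1,\ldots,\alpha_{n-1}$. The unit-root eigenvalue $F_0(t)/F_0(t^p)$ lives on the \emph{one-dimensional} quotient $\Omega_f(\Delta^\circ)/\fil_1$ and cannot by itself determine $n-1$ free constants. Concretely, applying the single Cartier-invariant period $\sP_{\v 0}$ to the identity $\cartier(1/f)=\sum_j\lambda_{0j}(t)\,(\theta^j(1/f))^\sigma$ gives one scalar relation $F_0(t)=\sum_j\lambda_{0j}(t)\,(\theta^jF_0)(t^p)$; at $t=0$ this collapses to $\lambda_{00}(0)=1$ and is silent on $\lambda_{0j}(0)$ for $j\ge1$. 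Your suggestion that these are ``Taylor coefficients of $F_0(t)/F_0(t^p)$ in a suitable uniformizer'' cannot work: for the simplicial family $F_0(t)/F_0(t^p)=1+O(t^{n+1})$, so its first $n-1$ Taylor coefficients in any local parameter at $t=0$ vanish, whereas $\alpha_3\ne0$ already for $n=4$. The information you are missing resides in the \emph{higher} periods $F_1,\ldots,F_{n-1}$ attached to the logarithmic solutions, equivalently in the higher Hasse--Witt congruences of~\S\ref{sec:periods-mod-p-s} taken with nontrivial test polynomials $\ell(\v x)$.

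The route in~\cite{Cartier0} follows the three-step recipe outlined in the paragraph preceding~\S\ref{sec:frob-structure}: one works on the rank-$n$ module $\Omega_f^{(n)}(\Delta^\circ)/\fil_n$, interpolates the full Cartier matrix $\Lambda^{(n)}$ via the congruences of Theorem~\ref{DCI-Dwork-cong-thm} (and its higher analogues), and then reduces to the de Rham basis. For the simplicial family these congruences involve $p$-adic limits of ratios of multinomial coefficients $\tfrac{((n+1)k)!}{k!^{\,n+1}}$ together with their shifted variants coming from the $\ell(\v x)$, and it is in those shifted limits---not in the bare unit root---that the Taylor coefficients of $\Gamma_p(x)/\Gamma_p(x/(n+1))^{n+1}$ appear.
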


This theorem provides an evidence to the prediction of Candelas, de la Ossa and van Straten that $p$-adic zeta values occur as entries of the Frobenius structure in Calabi--Yau families at $t=0$. More precisely, in~\cite[\S 4.4]{COS21} they conjecture that in general, for Calabi-Yau families with a point of maximal unipotent monodromy at $t=0$ the constants $\alpha_j$ are $\Q$-linear combinations of $p$-adic zeta values and their products. Moreover, the rational coefficients of these linear combinations are universal (independent of $p$). This is true in the case of simplicial families because of the following expansion of the logarithm of the Morita $p$-adic gamma function at $x=0$: 
\[
\log \Gamma_p(x)=\Gamma'_p(0)x-\sum_{m\ge2}\frac{\zeta_p(m)}{m}x^m.
\] 
Here all even $p$-adic zeta values are zero: $\zeta_p(2m)=0$ for $m \ge 1$. With this we find that
\[\bal
\frac{\Gamma_p(x)}{\Gamma_p(x/5)^5} &= 1 - \frac{8}{25} \zeta_p(3)  x^3 + O(x^4) \\
\frac{\Gamma_p(x)}{\Gamma_p(x/6)^6} &= 1 - \frac{35}{108} \zeta_p(3) x^3 + O(x^5) \\
\frac{\Gamma_p(x)}{\Gamma_p(x/7)^7} = 1 - &\frac{16}{49} \zeta_p(3) x^3 - \frac{480}{2401} \zeta_p(5) x^5 + O(x^6) \\
\frac{\Gamma_p(x)}{\Gamma_p(x/8)^8} = 1 - \frac{21}{64} \zeta_p(3) & x^3 - \frac{819}{4096} \zeta_p(5) x^5 + 
\frac{441}{8192} \zeta_p(3)^2 x^6 + O(x^7) \\
\eal\]
We also note that $\alpha_1=\alpha_2=0$ for any $n$.

\begin{example}\label{hyperoctahedral-Cartier0} In~\cite[\S 6]{IN} and \cite[Theorem 1.5]{Cartier0} we prove a similar theorem for hyperoctahedral Calabi--Yau family in $n$ dimensions. In this case also $\alpha_1=\alpha_2=0$, and in general\footnote{In our paper we have another somewhat more sophisticated expression for $\alpha_j$'s. The simplification given here was pointed out to us by Don Zagier.}
\[
\alpha_j = \text{ coefficient of $x^j$ in } e^{-\Gamma_p'(0)x}\Gamma_p(x), \; j=1,\ldots,n-1.
\]
In contrast to the simplicial example above, hyperoctahedral Picard--Fuchs differential operators aren't hypergeometric. 
\end{example}
 
\subsection{Integrality of mirror maps and instanton numbers}

\emph{Mirror symmetry} is a relationship between Calabi--Yau manifolds. Two such manifolds may be very different geometrically but are nevertheless equivalent when employed as `extra dimensions' to describe interaction of particles in string theory. In~\cite{COGP1991} physicists Candelas, de la Ossa, Green and Parkes made a sensation: they predicted solutions to very hard problems of enumerative geometry by doing miraculous computations with solutions of a differential equation 'on the other side of the mirror'. We will briefly recall their discovery.  

Candelas, de la Ossa, Green and Parkes considered the differential operator
\be{quintic-operator}
L = \theta^4 - 5^5 t \left(\theta+\tfrac15\right)\left(\theta+\tfrac25\right)\left(\theta+\tfrac35\right)\left(\theta+\tfrac45\right)
\ee
where $\theta=t \tfrac{d}{dt}$. The differential equation $Ly =0$ has a singular point at $t=0$ with maximally unipotent local monodromy. There is a unique (up to multiplication by a constant) solution which is analytic analytic at $t=0$:
\[
y_0(t) = \sum_{n=0}^{\infty} \frac{(5n)!}{n!^5} t^n = 1 + 120 t + 113400 t^2 + \ldots = F_0(t) \in \Z\lb t \rb.
\]
There is also a unique solution of the form
\[
y_1(t) = F_0(t) \log(t) + F_1(t) \quad \text{ with } F_1(t) \in t\Q\lb t \rb.
\]
Here  $F_1(t) = \sum_{n=1}^\infty  \frac{(5n)!}{n!^5}\left(\sum_{j=1}^{5k} \frac5j\right) t^k$ does not have integral coefficients in contrast to $y_0(t)$, but the authors of~\cite{COGP1991} observe integrality of the \emph{canonical coordinate}:
\[
q(t) = \exp\left( \frac{y_1(t)}{y_0(t)}\right) = t \exp\left( \frac{f_1(t)}{f_0(t)}\right) \in t\Z\lb t \rb.
\] 
This observation was proved by by B.-H.Lian and S.-T.Yau in 1996. \footnote{If we substitute $t \mapsto t^5$ in $L$ we will obtain the Picard--Fuchs operator of the simplicial family with $n=4$, see Example~\ref{simplicial-de-example}. It the follows from (i) in our Theorem~\ref{exc-lift-thm} that the canonical coordinate is $p$-integral for every $p>5$.} We shall also consider the next solution of the form
\[
y_2(t) = F_0\frac{\log(t)^2}{2!} + F_1 \log(t) + F_2 \quad \text{ with }  F_2 \in t \Q\lb t \rb
\] 
and express the ratios $y_i/y_0$ in terms of the canonical coordinate:  
\[\bal
&\frac{y_0}{y_0}=1, \quad \frac{y_1}{y_0}=\log(q), \\
&\frac{y_2}{y_0}=\frac12\log(q)^2 + 575 q + \frac{975375}{4} q^2 +    + \frac{1712915000}{9} q^3 + \ldots
\eal\] 
The series
\[
Y(q) = \left( q \frac{d}{dq}\right)^2 \frac{y_2}{y_0} = 1 + 575 q + 975375 q^2 + \ldots
\]
is called the \emph{Yukawa coupling}. Candelas, de la Ossa, Green and Parkes write its Lambert expansion in the form
\[
Y(q) = 1 + \sum_{d \ge 1} N_d \, d^3 \, \frac{q^d}{1-q^d}
\]
and call $N_d$ \emph{instanton numbers}. They predict that for every $d \ge 1$ numbers $5 N_d$ coincide with the numbers of degree $d$ rational curves that lie on a generic threefold of degree $5$ in $\mathbb{P}^4$:
\[\bal
&5 N_1 = 2875, \quad 5 N_2 = 609250, \\
&5 N_3 = 317206375, \quad 5 N_4 = 242467530000, \quad ...
\eal\]
Only the first two numbers were known at that time! The number $2875$ of lines on a general quintic was determined by H. Schubert in 1886. The number $609250$ of conics was determined  by S. Katz in 1986. In 1993 G.Ellingsrud and S.Str\o mme computed the number of cubic curves on the quintic threefold. Their result served as a crucial cross-check for the above physicists' prediction, which was made in 1991.  

In 1990s the Gromov--Witten theory was developed to provide a rigorous basis for counting curves on general manifolds. Subsequently, Givental and Lian--Liu--Yau proved the \emph{mirror theorem} which justified the equality of instanton numbers and genus zero Gromov--Witten invariants. Let us note that, in contrast to the numbers of curves of given degree on a manifold, instanton numbers can be computed  easily. Computations showed that in the above case~\eqref{quintic-operator}, known as the \emph{quintic case}, instanton numbers are integral, though a priori they are expected to be rational numbers. Similarly, rigorous count of numbers of rational curves of given degree (Gromov--Witten invariants) yields a priory rational numbers, as this number is obtained via integration over a moduli space of such curves. In~\cite{IN} we prove the following claim.

\begin{theorem} [\cite{IN}, Corollary 1.9]\label{quntic-case-thm} For the quintic differential operator~\eqref{quintic-operator} instanton numbers are $p$-integral for every $p > 5$.  
\end{theorem}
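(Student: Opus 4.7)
The plan is to derive the theorem from the Frobenius structure on the quintic Picard--Fuchs operator supplied by the theory of this paper. The quintic operator~\eqref{quintic-operator} arises, up to the substitution $t\mapsto t^5$ and an overall scalar, from the Picard--Fuchs operator of the four-dimensional simplicial Calabi--Yau family of Example~\ref{simplicial-de-example}, so Theorem~\ref{Cartier0main} applies and produces an explicit Cartier matrix
\[
\Lambda(t)\;=\;U(t)\,\Lambda_0\,U(t^p)^{-1},
\]
valid for every $p>5$, where $U(t)$ is the Wronskian of the standard basis $(y_0,y_1,y_2,y_3)$ and $\Lambda_0$ is upper triangular with diagonal $(1,p,p^2,p^3)$. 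In particular the $(i,j)$-entry of $\Lambda(t)$ lies in $p^{j}R$, where $R$ is the $p$-adic completion of $\Z[t,s_4(t)^{-1}]$. The key fact I would exploit is that the bottom-right entry $\lambda_{0,3}(t)$ is divisible by exactly $p^3$.

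The core of the argument is to re-express this matrix identity in terms of the canonical coordinate $q$ using the excellent Frobenius lift $\sigma_0$ produced by Theorem~\ref{exc-lift-thm}. Since the vertex coefficient of the simplicial polynomial is $1$, this lift acts as $q\mapsto q^p$, and the canonical coordinate together with its inverse, the mirror map $t(q)$, are $p$-integral. Conjugating $\Lambda(t)$ by $\mathrm{diag}(1,y_0,\ldots,y_0)$ to pass to the flat basis and reading off the $(0,3)$-entry, the divisibility $\lambda_{0,3}\in p^3 R$ translates into a scalar identity of the form
\[
\frac{Y(q)}{Y(q^p)}\;\in\;1+p^3\,\Z_p\lb q\rb,
\]
where $Y(q)=(q\,d/dq)^2(y_2/y_0)$ is the Yukawa coupling. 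Equivalently, if $a_n:=[q^n](Y(q)-1)$, we obtain Atkin--Swinnerton-Dyer type supercongruences
\[
a_{p^s m}\;\is\;a_{p^{s-1}m}\pmod{p^{3s}}\qquad\text{for all } s\ge 1,\ (m,p)=1.
\]

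The last step is purely combinatorial. From the Lambert expansion $Y(q)-1=\sum_{n\ge 1} a_n q^n$ and the relation $a_n=\sum_{d\mid n} N_d\,d^3$, M\"obius inversion yields $N_d\,d^3=\sum_{k\mid d}\mu(d/k)\,a_k$. Writing $d=p^s m$ with $(m,p)=1$ and grouping divisors of $d$ by their $p$-adic valuation, the sum telescopes to $\sum_{k\mid m}\mu(m/k)\bigl(a_{p^s k}-a_{p^{s-1}k}\bigr)$ when $s\ge 1$, and each parenthesized difference is divisible by $p^{3s}$ by the supercongruence above. Dividing by $d^3=p^{3s}m^3$ and using $(m,p)=1$ yields $N_d\in\Z_p$; the case $s=0$ reduces to mere $p$-integrality of $a_d$, which follows from $Y(q)\in\Z_p\lb q\rb$ via the $p$-integrality of the standard solutions and the mirror map.

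The main obstacle is the translation in the second paragraph from the abstract matrix identity for $\Lambda(t)$ on the cohomological quotient to a genuine scalar congruence for $Y(q)$. Here the excellence of $\sigma_0$ is indispensable: for any non-excellent lift the analogous calculation produces a logarithmic correction term of shape $\log(t^\sigma/t^p)$, already visible in the toy computation~\eqref{simple-example}, which would spoil the $p^3$ divisibility of the $(0,3)$-entry once conjugated into the flat basis. A secondary technical point is to verify that the transition from the monomial basis $\v x^{\v u}/f(\v x)^m$ used in Theorem~\ref{Cartier0main} to the flat cyclic basis $\theta^i(1/f)$ in which the Yukawa coupling is read off preserves the $p$-adic bounds on matrix entries; this reduces to $p$-integrality of $y_0$ and of the mirror map, both of which are in hand from part~(i) of Theorem~\ref{exc-lift-thm} and the hypergeometric form of the quintic periods.
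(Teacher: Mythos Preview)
The paper does not prove this from scratch: it invokes Theorem~\ref{IN-first-theorems} (Theorems~1.4, 1.6, 1.7 of~\cite{IN}) as a black box, verifies its hypotheses for the simplicial Picard--Fuchs operator via Theorem~\ref{Cartier0main} (the divisibility $\lambda_{ij}\in p^jR$, the vanishing $\alpha_1=0$, and self-duality of $L$), and then passes to the quintic via the substitution $t\mapsto t^5$. Your proposal follows the same arc but attempts to unpack the content of part~(iii) of Theorem~\ref{IN-first-theorems}. The closing M\"obius-inversion step is correct: the supercongruence $a_{p^sm}\equiv a_{p^{s-1}m}\pmod{p^{3s}}$ for the Yukawa coefficients indeed implies $N_d\in\Z_p$. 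The gap lies in deriving that supercongruence.

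Your appeal to the excellent Frobenius lift $\sigma_0$ of Theorem~\ref{exc-lift-thm} is misplaced. That theorem operates modulo $\fil_2$, i.e.\ at the $p^2$ level, and cannot by itself furnish a $p^3$-divisibility; moreover the Cartier matrix of Theorem~\ref{Cartier0main} is computed for the lift $t\mapsto t^p$, not for $\sigma_0$, and switching lifts changes the matrix. In the argument of~\cite{IN} one works with $t\mapsto t^p$ throughout; the $p$-integrality of $q$ and the excellent lift are outputs of the Frobenius-structure hypotheses, not inputs, so the claim that excellence is ``indispensable'' is incorrect. More seriously, the assertion that a diagonal conjugation and reading off a single entry yields $Y(q)/Y(q^p)\in 1+p^3\Z_p\lb q\rb$ is not justified: extracting the Yukawa supercongruence from the bounds $\lambda_{ij}\in p^j\Z_p\lb t\rb$ requires both self-duality of $L$ (which you never invoke) and the condition $\alpha_1=0$, and proceeds through a careful analysis of successive quotients of the Wronskian in the $q$-variable rather than a single matrix entry. (Minor slip: Theorem~\ref{Cartier0main} is already stated in the cyclic basis $\theta^i(1/f)$, not in a monomial basis.)
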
  

Let us remark that Gromov--Witten invariants are a special case of more general BPS-numbers, whose integrality was proved by Ionel and Parker in~\cite{BPS}. Combination of their result with the mirror theorem should also yield a proof of integrality of instanton numbers. The advantage of our proof is that it does not use mirror symmetry at all, it is a direct proof in terms of the differential equation itself. 
Namely, we exploit the fact that operator $L$ has a $p$-adic Frobenius structure with special properties for almost all primes $p$. We will sketch the idea in the rest of this section.

Consider a 4th order differential operator 
\be{order-4-operator}
L = \theta^4 + a_1(t)\theta^3 + a_2(t) \theta^2 + a_3(t) \theta + a_4(t)
\ee
with $a_i \in \Q(t)$ for $i=1,\ldots,4$. Assume also that $a_i(0)=0$ for all $i$. This is the condition under which $L$ has a maximal unipotent monodromy at $t=0$. Consider the standard basis of solutions to the differential equation $Ly=0$ near $t=0$:
\[
y_i(t) = \sum_{j=0}^{i} F_j(t) \frac{\log^{i-j}(t)}{(i-j)!}, \quad i=0,1,2,3
\]
with $F_0 \in 1+t\Q\lb t \rb$ and $F_j \in t\Q\lb t \rb$ for $j>0$. Consider the fundamental matrix of solutions $U(t)=(\theta^i y_j)_{0 \le i,j \le 3}$. As we explained in \S\ref{sec:frob-structure}, a $p$-adic Frobenius structure for the operator $L$ is a $4 \times 4$ matrix 
\be{frob-str-matrix-rank-4}
\Lambda(t) = (\lambda_{ij}(t))_{0 \le i,j \le 3} = U(t) \begin{pmatrix}1&p \alpha_1&p^2 \alpha_2&p^3 \alpha_3 \\
0&p&p^2 \alpha_1&p^3\alpha_2\\
0&0&p^2&p^3\alpha_1\\
0&0&0&p^{3}\end{pmatrix} U(t^p)^{-1}
\ee
with special $p$-adic numbers $\alpha_1,\alpha_2,\alpha_3$ such that the matrix entries $\lambda_{ij}(t)$ are $p$-adic analytic elements. That is, they can be approximated by $p$-adically by elements of $\Q(t)$. Existence of such matrix is a strong arithmetic property of the operator $L$, and if it exists it is unique. For the questions of integrality in this section we are interested in matrices $\Lambda(t)$ as above with the property that
\be{CY-frob-str}
\lambda_{ij}(t) \in p^j \Z_p\lb t \rb \quad \text{ for all } \quad 0 \le i,j \le 3. 
\ee

\begin{theorem}[\cite{IN}, Theorems 1.4, 1.6 and 1.7]\label{IN-first-theorems} Suppose for the operator $L$ as in~\eqref{order-4-operator} there exist $p$-adic numbers $\alpha_1,\alpha_2,\alpha_3$ such that entries of matrix~\eqref{frob-str-matrix-rank-4} have property~\eqref{CY-frob-str}. Then
\begin{itemize}
\item[(i)] the holomorphic solution is $p$-integral: $y_0 \in \Z_p \lb t \rb$,
\item[(i)] the canonical coordinate is $p$-integral: $q = \exp(y_1/y_0) \in \Z_p \lb t \rb$,
\item[(iii)] if in addition $L$ is self-dual and $\alpha_1=0$, then the instanton numbers of $L$ are $p$-integral: $n_d \in \Z_p$ for all $d \ge 1$
\end{itemize}
\end{theorem}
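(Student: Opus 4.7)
The core identity is $\Lambda(t)U(t^p)=U(t)\Lambda_0$. Reading off the $(0,k)$-entry and using the explicit shape $(\Lambda_0)_{ij}=p^j\alpha_{j-i}$ with $\alpha_0:=1$ yields
\[
(*)_k\colon\ \sum_{j=0}^{3}\lambda_{0j}(t)(\theta^j y_k)(t^p)=p^k\sum_{i=0}^{k}\alpha_{k-i}\,y_i(t),\qquad k=0,1,2,3,
\]
and together with the divisibility $\lambda_{0j}\in p^j\Z_p\lb t\rb$ these are the only input I use. For (i), $(*)_0$ reads $F_0(t)=\sum_j\lambda_{0j}(t)(\theta^j F_0)(t^p)$; writing $\lambda_{0j}=p^j\mu_{0j}$ with $\mu_{0j}\in\Z_p\lb t\rb$ and $(\theta^j F_0)(t^p)=\sum_m m^j a_m t^{pm}$, extraction of the $t^n$-coefficient gives $a_n=\sum_{pm\le n}a_m\cdot[t^{n-pm}]\Phi_m(t)$ with $\Phi_m(t):=\sum_j(mp)^j\mu_{0j}(t)\in\Z_p\lb t\rb$. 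Since $pm\le n$ forces $m<n$ for $n\ge 1$, strong induction gives $a_n\in\Z_p$.

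For (ii), the plan is to apply the Dieudonn\'e--Dwork lemma to the unit $q(t)/t=e^{G(t)}$ with $G:=y_1/y_0-\log t=F_1/F_0$, reducing $q\in t\Z_p\lb t\rb$ to the assertion $G(t^p)-pG(t)\in p t\Z_p\lb t\rb$ (valid for $p>2$). Expanding $\theta^j y_1=(\theta^j F_0)\log t+j(\theta^{j-1}F_0)+\theta^j F_1$ for $j\ge1$, evaluating at $t^p$ (so $\log t^p=p\log t$), and cancelling the $\log t$-contributions in $(*)_1$ against $(*)_0$ extracts
\[
p(F_1+\alpha_1 F_0)(t)=\lambda_{00}(t)F_1(t^p)+\sum_{j\ge1}\lambda_{0j}(t)\bigl[(\theta^j F_1)(t^p)+j(\theta^{j-1}F_0)(t^p)\bigr].
\]
Substituting $(*)_0$ to rewrite $\lambda_{00}(t)F_0(t^p)=F_0(t)(1-R(t))$ with $R\in p\Z_p\lb t\rb$, then dividing by $F_0(t)$ and collecting terms, this becomes a Dwork-type functional equation
\[
G(t^p)-pG(t)=p\alpha_1+R(t)G(t^p)-T(t),
\]
whose constant term vanishes by the boundary identity $\lambda_{01}(0)=p\alpha_1$ read off from $(*)_1$ at $t=0$. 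The main obstacle is the $p$-integrality of the right-hand side: neither $R(t)G(t^p)$ nor $T(t)$ is manifestly in $p t\Z_p\lb t\rb$ because $G$ need not itself be $p$-integral, and the argument closes only through an iterative bootstrap that substitutes the displayed equation into itself, exploiting the extra $p$-divisibility carried by $\lambda_{0j}$ for $j\ge1$ together with (i) to force $p$-adic convergence of the right-hand side into $p t\Z_p\lb t\rb$.

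For (iii), self-duality of $L$ imposes a symplectic compatibility $\Lambda_0^{\top}J\Lambda_0=p^3 J$ on the constant matrix $\Lambda_0$ for the anti-diagonal form $J$; combined with $\alpha_1=0$ this also forces $\alpha_2=0$, leaving $\alpha_3$ as the single free parameter. Passing to the $p$-integral canonical coordinate $q$ from (ii), the Gauss--Manin connection in $q$ is controlled by the Yukawa coupling $Y(q)=(q\partial_q)^2(y_2/y_0)$; rewriting $(*)_2$ in the $q$-variable produces a functional equation $Y(q^p)\equiv\lambda(q)Y(q)\pmod{p^2}$ with $\lambda\in 1+pq\Z_p\lb q\rb$ that bootstraps to $Y\in\Z_p\lb q\rb$. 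M\"obius-inverting the Lambert expansion $Y-1=\sum_{d\ge 1}n_d d^3 q^d/(1-q^d)$ gives $n_d d^3\in\Z_p$; for $p\nmid d$ this immediately yields $n_d\in\Z_p$, while for $p\mid d$ the missing three powers of $p$ are supplied by iterating the Frobenius invariance of $Y$ in the same spirit as (ii).
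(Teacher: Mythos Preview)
First, note that the paper does not actually prove this theorem: it is quoted from \cite{IN} without proof, so there is no argument in the present text to compare against. That said, your outline deserves evaluation on its own merits.

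Your argument for (i) is correct and essentially complete: the recursion $a_n=\sum_{pm\le n}a_m\,[t^{n-pm}]\Phi_m(t)$ with $\Phi_m\in\Z_p\lb t\rb$ does the job by strong induction, using $a_0=1$ and $\lambda_{00}(0)=1$.

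For (ii), the strategy via Dieudonn\'e--Dwork is the right one, and your functional equation
\[
G(t^p)-pG(t)=p\alpha_1+R(t)G(t^p)-T(t)
\]
is correctly derived. But the closing step is a genuine gap, not just a detail. You concede that neither $R(t)G(t^p)$ nor $T(t)$ is manifestly in $pt\Z_p\lb t\rb$, since $G$ itself is not known to be $p$-integral, and you invoke an unspecified ``iterative bootstrap''. The difficulty is real: $T(t)$ contains terms $\lambda_{0j}(t)(\theta^jF_1)(t^p)$, and $\theta^jF_1=\theta^j(F_0G)$ involves $(\theta^kG)(t^p)$ for $k\le j$, not just $G(t^p)$. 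Even after substituting $G(t^p)=H(t)+pG(t)$, the right-hand side still carries $G(t)$ and its $\theta$-derivatives with denominators you have not controlled. A correct argument must either reorganise these terms so that only the combination $G(t^p)-pG(t)$ and $p$-integral quantities appear, or proceed coefficientwise with a precise induction on $\ord_p$ of the coefficients; your sketch does neither.

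For (iii) the gap is larger. You assert that self-duality plus $\alpha_1=0$ forces $\alpha_2=0$ (this is correct), but then the claimed functional equation $Y(q^p)\equiv\lambda(q)Y(q)\pmod{p^2}$ is stated without derivation, its bootstrap to $Y\in\Z_p\lb q\rb$ is not explained, and most seriously the case $p\mid d$ is dismissed with ``iterating the Frobenius invariance of $Y$ in the same spirit as (ii)''. Recovering three extra powers of $p$ when $d=p^k d'$ with $k\ge1$ is the heart of the matter and requires a genuinely new argument (in \cite{IN} this is where most of the work lies); nothing in your outline indicates how it would go.
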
       

In Theorem~\ref{Cartier0main} and Example~\ref{hyperoctahedral-Cartier0} we have seen that the Cartier matrices $\Lambda(t)$ have $\alpha_1=0$ for simplicial and hyperoctahedral families with $n=4$. Condition~\eqref{CY-frob-str} is not very hard to check in our construction of the Cartier operator, see~\cite[Prop 4.2]{IN}. The above theorem then implies $p$-integrality for almost all $p$ of instanton numbers for the Picard--Fuchs operators 
\[
L = \theta^4 - (5 t)^5 \left(\theta+1\right)\left(\theta+2\right)\left(\theta+3\right)\left(\theta+4\right)
\]
and 
\[\bal
L = (1024t^4& - 80t^2 + 1)\theta^4 + 64(128t^4 - 5t^2 )\theta^3 \\
&+16( 1472 t^4 - 33t^2 )\theta^2 + 32(896t^4 - 13t^2 )\theta + 128(96t^4 - t^2)
\eal\]
of the simplicial and hyperoctahedral families respectively. The quintic operator~\eqref{quintic-operator} differs from the first operator above by the change of variables $t \mapsto t^5$. One can show that  substitutions $t \mapsto t^N$ preserve $p$-integrality of canonical coordinates and instanton numbers for all $p \nmid N$, and hence Theorem~\ref{quntic-case-thm} also follows from the simplicial case.  

\begin{problem} In~\cite{AESZ}  Gert Almkvist, Christian van Enckevort, Duco van Straten and Wadim Zudilin make a search among differential operators $L$ which are of order 4, self-dual and have a point of maximally unipotent monodromy at $t=0$. They look for operators which have arithmetic properties (i)-(iii) as in Theorem~\ref{IN-first-theorems} for almost all primes $p$. Such operators are called Calabi-Yau differential operators, and more than 400 such operators we found \emph{experimentally} up to now. Property (i) usually holds for all known cases, (ii) was proved for some of them, and (iii) is only proved for the simplicial and hyperoctahedral operators. 
     
Construct $p$-adic Frobenius structures for other `experimental' Calalbi-Yau operators and check if they satisfy~\eqref{CY-frob-str} and have $\alpha_1=0$.    
\end{problem}

It seems natural to conjecture that all experimentally found Calabi--Yau operators have a $p$-adic Frobenius structure with property~\eqref{CY-frob-str} and $\alpha_1=0$ for almost all primes $p$, from which their arithmetic properties follow by Theorem~\ref{IN-first-theorems}.


\begin{thebibliography}{xx}
\bibitem{AESZ} G. Almkvist, C. van Enckevort, D. van Straten, W. Zudilin, \emph{Tables of Calabi-Yau equations}, arXiv:math/0507430.

\bibitem{ASD} A. Atkin and H. Swinnerton-Dyer, \emph{Modular forms on non congruence subgroups}, Proc. Symp. Pure Math. XIX, Amer. Math. Soc., Providence 1971

\bibitem{Batyrev} V. Batyrev, \emph{Variations of the MHS of affine hypersurfaces in algebraic tori},
Duke Math Journal, 69 (1993), 349--409

\bibitem{BHS18} F. Beukers, M. Houben, A. Straub, \emph{Gauss congruences for rational functions in several variables}, Acta Arithmetica 184 (2018), 341--362

\bibitem{DCI} F. Beukers, M. Vlasenko, \emph{Dwork crystals I}, Int. Math. Res. Notices, 2021, 8807--8844

\bibitem{DCII} F. Beukers, M. Vlasenko, \emph{Dwork crystals II}, Int. Math. Res. Notices, 2021, 4427--4444

\bibitem{DCIII} F. Beukers, M. Vlasenko, \emph{Dwork crystals III: from excellent Frobenius lifts towards supercongruences}, Int. Math. Res. Notices, 2023, 20433--20483

\bibitem{IN} F. Beukers, M. Vlasenko, \emph{On $p$-integrality of instanton numbers}, Pure Appl. Math. Q. 19, no. 1 (2023), 7--44 

\bibitem{Cartier0} F. Beukers, M. Vlasenko, \emph{Frobenius structure and $p$-adic zeta function}, arXiv:2302.09603 

\bibitem{COGP1991} P. Candelas, X. de la Ossa, P. Green, L. Parkes, \emph{An exactly soluble superconformal theory from a mirror pair of Calabi--Yau manifolds}, Phys. Lett. B 258 (1991), no.1--2, 118--126


\bibitem{COS21} Ph. Candelas, X. de la Ossa, D. van Straten, \emph{Local Zeta Functions
from Calabi--Yau Differential Equations}, arXiv:2104.07816 [hep-th]


\bibitem{Dw-deformation} B. Dwork, \emph{A deformation theory for the zeta function of a hypersurface}, Proc. Internat. Congr. Mathematicians (Stockholm, 1962), 247--259


\bibitem{Dw-rationality} B. Dwork, \emph{On the rationality of the zeta function of an algebraic variety}, Amer. J. Math.82 (1960), 631--648

\bibitem{Dw-uniqueness} B. Dwork, \emph{On the uniqueness of Frobenius operator on differential equations}, Advanced Studies in Pure Mathematics 17, 1989, 89--96


\bibitem{Dw-p-adic-cycles}  B. Dwork, \emph{$p$-adic cycles}, Publications mathématiques de l’I.H.É.S. 37 (1969), p. 27--115


\bibitem{HLYY} A. Huang, B. Lian, S.-T. Yau, Ch. Yu, \emph{Hasse--Witt matrices, unit roots and period integrals}, Math. Ann. 387 (2023), No. 1-2, 145--173 

\bibitem{BPS} E.N. Ionel and T.H. Parker, \emph{The Gopakumar-Vafa formula for symplectic manifolds, Annals of Math. 187 (2018), 1--64}

\bibitem{Katz} N. Katz, \emph{Internal reconstruction of unit-root F-crystals via expansion coefficients}, Ann. Sci. Éc. Norm. Supér., IV. Sér. 18 (1985), 245--285 

\bibitem{Katz-cong-zeta} N. Katz, \emph{Une formule de congruence pour la fonction $\zeta$}, S.Gs.A. 7 II, Lecture Notes in Mathematics 340, Springer, 1973


\bibitem{Koblitz} N. Koblitz, \emph{$p$-adic numbers, $p$-adic analysis and zeta-functions}, Graduate Texts in Mathematics 58, Springer, 1984, 153 pp

\bibitem{MV-16} A. Mellit, M. Vlasenko, \emph{Dwork's congruences for the constant terms of powers of a Laurent polynomial}, Int. Jour. Num. Theor., vol. 12, no. 2 (2016), 313 -- 321


\bibitem{Stienstra} J. Stienstra, \emph{Formal groups and congruences for $L$-functions}, Amer. J. Math. 109 (1987), 1111-1127

\bibitem{HHW} M. Vlasenko, \emph{Higher Hasse-Witt matrices}, Indag. Math. 29 (2018), 1411--1424
\end{thebibliography}
\end{document}